\newlength{\dtildeheight}
\newcommand{\doubletilde}[1]{%
	\settoheight{\dtildeheight}{\ensuremath{\tilde{#1}}}%
	\addtolength{\dtildeheight}{-0.30ex}%
	\tilde{\vphantom{\rule{1pt}{\dtildeheight}}%
		\smash{\tilde{#1}}}}
\newtheorem*{theorem*}{Theorem}
\title{A Cosheaf Theory of Reciprocal Figures: Planar and Higher Genus Graphic Statics}
\author{Zoe Cooperband, Robert Ghrist, Jakob Hansen}
\begin{document}
	\pagenumbering{arabic}
	\maketitle
	
	\begin{abstract}
		This paper introduces cellular sheaf theory to graphical methods and reciprocal 
		constructions in structural engineering. The elementary mechanics and statics of 
		trusses are derived from the linear algebra of sheaves and cosheaves. Further, the 
		homological algebra of these mathematical constructions cleanly and concisely describes the 
		formation of 2D reciprocal diagrams and 3D polyhedral lifts. Additional 
		relationships between geometric quantities of these dual diagrams are developed, 
		including systems of impossible edge rotations. These constructions generalize to 
		non-planar graphs. When a truss embedded in a torus or higher genus surface has a 
		sufficient degree of axial self stress, we show non-trivial reciprocal figures and 
		non-simply connected polyhedral lifts are guaranteed to exist.
	\end{abstract}
	
	%
	\section{Introduction}
	\label{sec:intro}
	%
	Graphic statics consists of a family of techniques for investigating the 
	static equilibrium solutions of structures geometrically. As early as the 18th century, 
	Varignon \cite{Varignon1725} introduced the concept of the {\it funicular polygon} and 
	a {\it polygon of forces} to maintain structural equilibrium within a loaded structure 
	\cite{ReciprocalAkbarzadeh2016}. Over a century later the foundational properties of
	{\it reciprocal figures} were further developed by Maxwell 
	\cite{ReciprocalMaxwell1864, maxwell_1870}, Cremona \cite{cremona1872figure, cremona1890graphical}, 
 	and Rankine \cite{rankine1864}, among others. These graphical 
	methods were widely used by designers such as Antoni Gaud\'i, Heinx Isler and Frei 
	Otto among others to find structurally efficient and beautiful structural forms \cite{Rasch1996}.
	
	In recent times graphical methods have experienced a
	renaissance of interest and attention, spurred on by, e.g., structural design
	\cite{fuhrimann2018data, akbarzadeh2016three},
	optimization \cite{beghini2014structural, Konstantatou2018}, and
	education \cite{Saliklis2019}. These methods have been utilized in Crapo and Whiteley's 
	study of rigidity theory \cite{Crapo1979, Crapo1993}, Calladine and Pellegrino's work on 
	tensegrity structures \cite{Calladine1978, Pellegrino1986}, and Tachi's work on flexible 
	origami structures \cite{Tachi2012}. The algebraic foundations of the theory have been 
	further solidified by Micheletti \cite{GeneralizedMicheletti2008}, Van Mele and Block 
	\cite{AlgebraicVanMele2014}, and many others. McRobie, Konstantatou, and
	collaborators have further developed the understanding of kinematics and virtual work 
	of reciprocal structures \cite{MechanismsMcRobie2016, McRobie2017a}. 
	
	Great strides have been made recently in the understanding and adoption of {\it 3D 
	graphic statics} \cite{ReciprocalAkbarzadeh2016}. This theory has been proven valuable 
	in the design and analysis of compressed concrete \cite{akbarzadeh2017prefab, 
	bolhassani2018structural} and glass structures \cite{akbarzadeh2019design}. With 
	the additional challenges brought by the extra dimension, many researchers have 
	advanced the mathematical understanding the construction of 
	\cite{AlgebraicHablicsek2019, GeometryMcRobie2017, PolaritiesKonstantatou2018} 
	and the mechanical properties of this spatial duality \cite{mcrobieMRReciprocal2016, 
	mcrobie2021stability}. 
	
	\subsection{Background}
	\label{sec:bg}
	Although the most interesting current work is happening in 3D graphic statics, it is 
	nevertheless true
	that any advances here must rest on a firm platform of 2D graphic statics. We therefore 
	focus
	on a novel approach to 2D graphic statics as a stepping stone towards future 
	foundational development in 3D.
	
	In 2D graphic statics, a finite abstract graph $G$ is embedded in the plane with edges 
	sent to straight lines, forming a mechanical truss with free-rotating joints known 
	collectively 
	as a {\it form diagram}. This truss may admit non-trivial axial self stresses---an
	assignment of an internal tension or compression force to each edge such that
	these forces mutually cancel over vertices. The combined free body 
	diagram of axial forces is an embedding of the dual graph $\tilde{G}$
	in the plane called a {\it reciprocal figure} or {\it force diagram}. The internal tension or 
	compression force vectors on the primal graph form the edges of the dual, pictured in 
	Figure~\ref{fig:planar_node}. Through this process, the internal stresses can be 
	visualized 
	and understood graphically, revealing the rich underlying linear algebraic relationships.
	
	In structural analysis, {\it local} quantities considered in static analysis, such as stresses 
	over edges, coalesce into global properties of the truss such as axial self stress. This 
	conglomeration of local data constrained by linear relationships is naturally described 
	by 
	the data structures known as {\it cellular sheaves and cosheaves} \cite{GhristEAT}. 
	These 
	are the discrete analogue of (continuous) sheaves, widely used in topology, algebraic 
	geometry, logic, and other mathematical fields \cite{bredon1997sheaf, MacLane1994}. 
	Initially developed in the theses of Shepard \cite{shepard1986cellular} and Curry
	\cite{Curry2013}, cellular sheaves (and their cosheaf duals) have found 
	applications in network coding \cite{ghrist2011network}, Reeb graphs 
	\cite{de2016categorified}, logic circuits \cite{robinson2012asynchronous}, and pursuit 
	and evasion games \cite{ghrist2017positive}. Exciting recent work has applied sheaves 
	towards persistent homology \cite{yoon2018cellular, yoon2020persistence, 
		russold2022persistent}, 
	distributed optimization \cite{hansen2019distributed}, graph neural networks 
	\cite{hansen2020sheaf, bodnar2022neural, barbero2022sheaf}, distributed consensus 
	and flocking \cite{Hansen2019}, opinion dynamics \cite{Hansen2019, 
		hansen2021opinion, ghrist2022network}, and lattice theory \cite{RiessLatticeTarski, 
		riess2022diffusion}. In this paper, we continue this program and extend previous 
		work \cite{TowardsCooperband2023} applying cellular sheaf theory to the geometric 
	structures encountered in graphic statics.
	
	Many of these recent applications of cellular sheaves to applied problems were 
	presaged by work decades earlier which did not have the simpler cellular theory 
	available. Excellent examples include Schapira's and Viro's independent works on Euler 
	calculus for sheaves of
	constructible functions with applications to image analysis \cite{viro1988some, 
		schapira1995tomography}. In similar fashion, Billera and Yuzvinsky independently 
	developed cosheaves of piecewise polynomial splines, commonly used in architecture, 
	design, and font systems \cite{billera1988homology, yuzvinsky1992modules}. In graphic 
	statics, sheaves were also explored as an approach decades ago.
	Beginning in the 1980s, Crapo \cite{Crapo1988, Crapo1995}
	used sheaves and homological methods to describe structural relations. Alongside 
	Crapo, Whiteley developed a cosheaf theory not unlike that described here, then 
	coined 
	{\it geometric homology} \cite{Whiteley1998}. Other matrix methods have been 
	developed to algebraically formulate reciprocity without the full groundwork of sheaves 
	and their cohomology \cite{AlgebraicVanMele2014, MechanismsMitchell2016, 
		MechanismsMcRobie2016}. Projective duality has been utilized in parts to develop 
	reciprocal relations \cite{Williams2016, Crapo1982, Whiteley1979}. Our contributions 
	via 
	cellular sheaf theory unify and extend these threads with a theory that is both simpler 
	and 
	more powerful.
	
	\begin{figure}[ht]\centering
		\begin{subfigure}[t]{0.45\textwidth}\centering
			\includegraphics{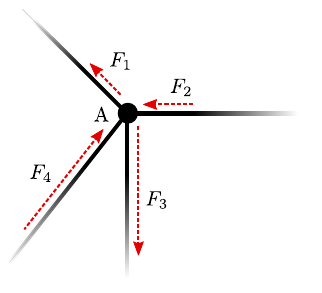}
			\caption{}
		\end{subfigure}
		\begin{subfigure}[t]{0.45\textwidth}\centering
			\includegraphics{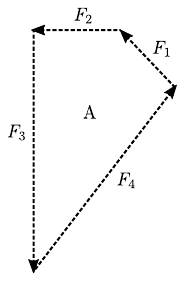}
			\caption{}
		\end{subfigure}
		\caption{The crux of reciprocity. Tension or compression forces along bars
			are transferred to node A where they sum to zero (a). When placed tip to
			tail using a clockwise orientation, these force vectors form the boundary of
			a dual polygon A, pictured in (b).}
		\label{fig:planar_node}
	\end{figure}
	
	
	\subsection{Outline and Contributions}
	\label{sec:contributions}
	This paper is a self-contained introduction to a cosheaf theory of 2D graphic statics. 
	This includes the statics of planar graphs, extended to a cellular structure on 
	the 2-sphere $S^2$ by filling in faces within graph cycles. Adapting to more complex 
	cellular topology beyond $S^2$ allows for the graphical analysis of non-planar graphs. 
	There are several nuances to this extension which are discussed after the grounding 
	planar theory is developed. These fundamental findings are expressed using the 
	homological algebra of cellular sheaves and cosheaves developed in 
	Section~\ref{sec:allsheaves}.
	
	In Section~\ref{sec:constructions}, we introduce the {\it constant cosheaf}, {\it force 
	cosheaf}, {\it linkage sheaf}, and {\it position sheaf}, which serve as the essential 
	foundation for the foundation of sheaf-theoretic graphic statics. 
	These constructions are purely in terms of vector spaces and linear maps, structured in 
	a manner that respects the cellular topology. Additionally, we demonstrate that {\it 
	Maxwell's counting rule} for frames can be interpreted as an instance of the cellular 
	cosheaf {\it Euler characteristic}, and the {\it stiffness matrix} is an instance of the  
	{\it sheaf (Hodge) Laplacian}.
	
	In Section~\ref{sec:2D1} two dimensional planar graphic statics is developed, where 
	Theorem~\ref{thm:plane2D} characterizes the homological relationship between a 
	planar graph and its dual graph embedding. We describe {\it reciprocity} between the 
	force cosheaf and position sheaf, revealing relations between a force diagram and its 
	reciprocal form diagrams. These connections extend the work of previous authors on 
	mechanical duality in planar graphic statics \cite{MechanismsMitchell2016, 
	MechanismsMcRobie2016}.
	
	\begin{theorem*}[\ref{thm:plane reciprocity},\ref{thm:maxwell_lift}]
		Suppose $(X, p)$ and $(\tilde{X}, q)$ are planar reciprocal diagrams over 
		the plane $\R^2$. The following vector spaces are isomorphic:
		\begin{itemize}
			\item[(i)] The space of mechanisms and global rotations of $(X,p)$.
			\item[(ii)] The space of parallel deformations of $(X,p)$ up to global translation.
			\item[(iii)] The space of impossible edge rotations over $(\tilde{X}, q)$.
			\item[(iv)] The space of axial self stresses over $(\tilde{X}, q)$.
			\item[(v)] The space of pure self shear stresses over $(\tilde{X}, q)$.
			\item[(vi)] The space of vertical polyhedral lifts of $(\tilde{X},q)$ up to shifts of a 
			global affine function.
		\end{itemize}
	\end{theorem*}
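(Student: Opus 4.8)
The plan is to realize all six spaces as kernels, cokernels, or (co)homology groups of the four (co)sheaves introduced in Section~\ref{sec:constructions}, and then to connect them by three kinds of maps: pointwise $90^\circ$ rotations, the Maxwell--Cremona lift, and the Poincar\'e duality between $X$ and $\tilde{X}$ as cell structures on $S^2$. Concretely, let $J \colon \R^2 \to \R^2$ denote rotation by $\pi/2$. First I would record the ``local'' isomorphisms, which require no topology. Applying $J$ to every force vector of an axial self stress on $(\tilde{X},q)$ turns the tip-to-tail closure condition at each node into the shear-balance condition; since $J$ is invertible and commutes with the incidence structure of the force cosheaf, this gives (iv)$\,\cong\,$(v). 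The same operator, applied now to velocity fields on $(X,p)$, exchanges the first-order length-preservation constraint $(p_i-p_j)\cdot(u_i-u_j)=0$ with the parallelism constraint that $v_i - v_j$ be proportional to $p_i - p_j$; thus $u$ is an infinitesimal mechanism exactly when $v = J^{-1}u$ is a parallel deformation. Under this correspondence translations map to translations while global rotations map to uniform dilations, so $J^{-1}$ carries the space of flexes modulo translations isomorphically onto the space of parallel deformations modulo translations, i.e.\ (i)$\,\cong\,$(ii).

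Next I would handle the Maxwell--Cremona correspondence (vi). Over the sphere filling of $\tilde{X}$, a vertical lift assigns a height to each vertex, linear on each face; the jumps in slope across an edge furnish one scalar per edge, and the requirement that the piecewise-linear graph close up consistently around each interior vertex is precisely the equilibrium condition defining an axial self stress. Adding a global affine function changes the lift but not the slope jumps, so lifts modulo affine functions inject into self stresses; surjectivity is the integrability statement that any equilibrated edge scalar can be integrated to a single-valued height, which holds because the relevant obstruction lives in a (co)homology group that vanishes for the sphere. This yields (iv)$\,\cong\,$(vi).

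The substance of the theorem is the reciprocity bridging the $X$-side spaces (i),(ii) to the $\tilde{X}$-side spaces (iii),(iv),(v),(vi). Here I would invoke Theorem~\ref{thm:plane2D}: because $X$ and $\tilde{X}$ are Poincar\'e-dual cell decompositions of $S^2$, the cellular chain complex computing force-cosheaf homology on $\tilde{X}$ is, degree by degree, the dual of the cochain complex computing position-sheaf cohomology on $X$, with $q$-cells of $\tilde{X}$ identified with $(2-q)$-cells of $X$ and the reciprocal embedding identifying the paired stalks. Transposing the boundary maps converts the cokernel computing impossible edge rotations (iii) and the homology computing axial self stress (iv) into the cohomology and kernel computing, respectively, parallel deformations (ii) and mechanisms (i) on the dual side. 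Chaining these identifications with the local isomorphisms above closes the loop among all six spaces.

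The main obstacle is this last step: verifying that the duality is an honest chain isomorphism and not merely a dimension count. The combinatorial self-duality of $X$ and $\tilde{X}$ on $S^2$ is classical, but the extension maps of the force cosheaf and the restriction maps of the position sheaf carry geometric data --- the coordinates $p$ and $q$ and the $90^\circ$ relationship between an edge of $\tilde{X}$ and the force it transmits in $X$. I would need to check that under the Poincar\'e identification of cells these maps are exact transposes of one another, and it is here that the reciprocity hypothesis on $(X,p)$ and $(\tilde{X},q)$ is used essentially, rather than merely the abstract duality of the underlying graphs. Once this adjointness is established, the isomorphism of all six spaces follows formally from rank--nullity applied to the dual pair of complexes.
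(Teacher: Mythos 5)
Your local isomorphisms are sound and match the paper's: the quarter-turn $J$ is exactly the cosheaf isomorphism $\mu$ of Lemma~\ref{lem:force_position_iso}, giving (i)$\,\cong\,$(ii) on $X$ and (iv)$\,\cong\,$(v) on $\tilde{X}$, and your integrability argument for (iv)$\,\cong\,$(vi) is a fair paraphrase of Theorem~\ref{thm:maxwell_lift}. The gap is in the bridge between the two diagrams, which you correctly identify as the substance of the theorem but then resolve by a mechanism that does not exist. You claim that the chain complex computing force-cosheaf homology on $\tilde{X}$ is, degree by degree, the transpose of the cochain complex computing position-sheaf cohomology on $X$ under the identification of $q$-cells of $\tilde{X}$ with $(2-q)$-cells of $X$. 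It is not: the force cosheaf on $\tilde{X}$ is supported on the vertices and edges of $\tilde{X}$, i.e.\ on the faces and edges of $X$, whereas the position sheaf on $X$ is supported on the vertices and edges of $X$. These complexes have different ranks in each degree for a general sphere complex (the number of faces of $X$ need not equal its number of vertices), so no transposition or rank--nullity argument can identify their (co)homologies term by term. The ``adjointness check'' you defer to the end is therefore not a technical verification but an impossibility.

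What actually bridges the two sides is the short exact sequence $0\to\calf\to\overline{\R^2}\to\calg\to 0$ over $X$, with $\calg=\overline{\R^2}/\calf$, together with the connecting homomorphism of its long exact homology sequence. Poincar\'e duality identifies $H_2\calg$ with $H^0\tilde{\calg}$ (parallel realizations of $\tilde{X}$) and $H_1\calg$ with $H^1\tilde{\calg}$ (impossible edge rotations of $\tilde{X}$), and the vanishing of $H_1\overline{\R^2}$ on $S^2$ splits the long exact sequence into the two short exact sequences~\eqref{eq:2D1} and~\eqref{eq:2D2}, which deliver the cross-diagram identifications up to the $\R^2$ factors accounting for global translations. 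The connecting map is the snake-lemma construction --- lift a cycle of $\calg$ to $\overline{\R^2}$, take its boundary, and recognize the result as a cycle of $\calf$ --- not a transpose; it is this quotient cosheaf $\calg$, absent from your proposal, that carries the reciprocity. Notably, you already have the right idea in your Maxwell--Cremona paragraph (integrate edge data to cell data; the obstruction lives in a cohomology group that vanishes for the sphere): that is precisely the argument needed for the cross-diagram step as well, and your proof would close if you replaced the claimed chain-level duality with it.
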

	
	We note that here $X$ and $\tilde{X}$ must consist of nonsingular polygons, where 
	each 
	polygonal cell is a topological disk with boundary consisting of lower dimensional 
	polygonal cells. This is a general condition on cells to disallow degenerate boundary 
	maps. 
	See also the full definition of a regular cell complex \cite{Curry2013}.
	
	Section~\ref{sec:polyhedral_liftings} explores an alternative approach where the cells 
	are vertically lifted off of the plane to a polyhedron in 3D. This process of lifting a 
	self-stressed graph is widely known as the {\it Maxwell--Cremona correspondence}. The 
	homological relationships governing these lifts are formally expressed in 
	Theorem~\ref{thm:maxwell_lift} using the language of cosheaves.
	
	Section~\ref{sec:nonsphere} delves into the extensions of planar graphic statics to 
	non-spherical topology. This section addresses the construction of force and 
	form diagrams from non-planar graphs, allowing for a broader range of structural 
	configurations. Assurances for the existence of dual diagrams are derived in terms of 
	the genus of the cell structure. In essence, as long as 
	the dimensions of equilibrium stresses exceeds a multiple of the one-dimensional holes 
	present in the topology, the presence of dual diagrams is guaranteed. This insight 
	highlights the role played by topological characteristics of the cell structure.
	
	\begin{theorem*}[\ref{thm:genus}]
		Suppose $(X,p)$ is a regular cellular decomposition of an oriented surface of
		genus $g$ realized in $\R^2$. If the dimension of self stress exceeds $4$ times the 
		genus ($\dim H_1 \calf > 4g$), then there exists a non-trivial parallel realization of 
		$\tilde{X}$ in $\R^2$. The oriented length of each dual	edge is equal to the force 
		within its corresponding primal edge.
	\end{theorem*}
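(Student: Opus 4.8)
The plan is to run the planar reciprocal construction globally over the surface and to measure, by a \emph{period map}, the topological obstruction that the genus introduces. Recall that a class in $H_1\calf$ is precisely an axial self stress: an assignment of a force vector to each edge of $X$ that balances at every vertex. Given such a self stress $\omega$, I would assemble the associated \emph{force cochain} $\phi_\omega \in C^1(\tilde{X};\R^2)$, whose value on the dual edge crossing a primal edge $e$ is the force carried by $e$, a vector parallel to $e$ whose oriented length is that force. A parallel realization of $\tilde X$ is exactly an assignment $q \in C^0(\tilde X;\R^2)$ of positions to the dual vertices (the faces of $X$) whose coboundary recovers these prescribed dual edges, i.e.\ $\delta q = \phi_\omega$. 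Thus the theorem reduces to producing a nonzero $\omega$ for which $\phi_\omega$ is a coboundary.

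Next I would separate the two obstructions to solving $\delta q = \phi_\omega$. First, $\phi_\omega$ must be a cocycle: summing the dual edges around the dual face dual to a vertex $v$ is exactly the force-balance condition at $v$, so $\delta\phi_\omega = 0$ holds precisely because $\omega$ is a self stress. Second, a cocycle is a coboundary if and only if its class vanishes in $H^1(\tilde X;\R^2)$. Since $\tilde X$ is a regular cellular decomposition of the same oriented genus-$g$ surface $\Sigma_g$, cellular cohomology gives $H^1(\tilde X;\R^2)\cong H^1(\Sigma_g;\R)\otimes\R^2\cong \R^{2g}\otimes\R^2 = \R^{4g}$. Passing to cohomology classes therefore defines a linear map
\[
\Phi : H_1\calf \longrightarrow H^1(\Sigma_g;\R^2)\cong\R^{4g},\qquad \omega\longmapsto[\phi_\omega].
\]

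The conclusion then follows from rank--nullity. The space of self stresses that integrate to a genuine dual realization is exactly $\ker\Phi$, and
\[
\dim\ker\Phi \;\geq\; \dim H_1\calf - \dim H^1(\Sigma_g;\R^2) \;=\; \dim H_1\calf - 4g \;>\; 0
\]
by hypothesis. Hence some nonzero self stress $\omega$ lies in $\ker\Phi$, so $\phi_\omega = \delta q$ for some $q\in C^0(\tilde X;\R^2)$, and this $q$ is the desired parallel realization (unique up to the global translation spanning $\ker\delta$ on $C^0$). Nontriviality is automatic: since $\omega\neq 0$, some edge carries a nonzero force, so the corresponding dual edge has nonzero oriented length and the positions $q$ are not all equal. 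The oriented-length condition is built into the definition of $\phi_\omega$, so it holds by construction.

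The main obstacle I anticipate is the careful justification of the period map, not the counting argument. Concretely, I must verify that the force cochain $\phi_\omega$ assembled from a cosheaf class $\omega\in H_1\calf$ is naturally a cellular $1$-cocycle of $\tilde X$, so that $\Phi$ is well defined on homology and independent of representatives, and that the cosheaf/sheaf reciprocity established in the planar case (Theorem~\ref{thm:plane reciprocity}) is exactly the local statement being globalized here. I also need the identification $H^1(\tilde X;\R^2)\cong\R^{4g}$, which relies on the regular-cell and orientability hypotheses so that the dual cellular cochain complex computes the ordinary cohomology of $\Sigma_g$; degenerate boundary maps would invalidate this comparison. Once these structural points are secured, the dimension inequality delivers the existence statement immediately.
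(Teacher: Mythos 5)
Your proof is correct and is essentially the paper's own argument: your period map $\Phi$ is the map $\phi\colon H_1\calf\to H_1\overline{\R^2}\iso\R^{4g}$ induced by the inclusion $\calf\to\overline{\R^2}$ in the long exact sequence~\eqref{eq:genus 1}, read through Poincar\'e duality as landing in $H^1(\tilde X;\R^2)$, and your rank--nullity count is exactly the paper's observation that this map cannot be injective when $\dim H_1\calf>4g$. Your identification of $\ker\Phi$ with the self stresses that integrate to a dual realization is precisely the exactness statement $\im\vartheta=\ker\phi$, so the two proofs differ only in whether the connecting homomorphism is invoked by name or unwound into explicit cochains on $\tilde X$.
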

	
	Theorem~\ref{thm:genus} presents a significant extension of 2D 
	graphic statics to encompass non-planar graphs. A similar bound to 
	Theorem~\ref{thm:genus} can be derived for polyhedral lifts. Here, graph edges and 
	faces 
	may overlap in the plane, lifting to a toroidal or higher genus polyhedra.
	
	\begin{theorem*}[\ref{thm:genuslift}]
		Suppose $(X,p)$ is a regular cellular decomposition of an oriented surface of
		genus $g$ realized in $\R^2$. If the dimension of self stress $ \dim H_1 
		\calf$ is greater than $6g$, then there exists a non-trivial polyhedral lift to $\R^3$.
	\end{theorem*}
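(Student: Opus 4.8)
The plan is to measure the obstruction to integrating a self stress into a single-valued, piecewise-affine height function by a monodromy (period) map, and then to force the existence of a nontrivial lift by a dimension count against the $2g$ independent cycles of $X$. Recall that a vertical polyhedral lift assigns to each $2$-cell $f$ an affine function $z = a_f \cdot x + b_f$ with $a_f \in \R^2$, $b_f \in \R$, such that adjacent faces agree along their shared edge. Theorem~\ref{thm:maxwell_lift} establishes, in the simply-connected ($S^2$) setting, that such lifts correspond bijectively---up to a global affine function---to self stresses: across an edge $e$ the affine function jumps by $c_e \lambda_e$, where $\lambda_e$ is an affine form vanishing on the line of $e$ and $c_e$ is the stress, and the equilibrium condition at each vertex is exactly what forces these jumps to close up around that vertex.

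First I would globalize this local integrability to the genus-$g$ surface. Fixing a spanning tree of the dual face-adjacency graph and a base face, I integrate the prescribed jumps along tree paths to obtain an affine function on every face. The closing-up condition of Theorem~\ref{thm:maxwell_lift} makes the monodromy around every contractible loop vanish (such a loop bounds a chain of vertex-stars, the monodromy is additive over them, and each elementary contribution vanishes by vertex equilibrium), so the only surviving ambiguity is the monodromy around the $2g$ generators of $H_1$ of the surface. Around each generator the accumulated discrepancy is itself an affine function, an element of $\mathrm{Aff}(\R^2,\R) \cong \R^3$: a gradient period $\sum_e c_e\, \nabla\lambda_e \in \R^2$ together with a constant period in $\R$. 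Assembling these yields a linear period map
\[
M : H_1\calf \longrightarrow \mathrm{Aff}(\R^2,\R)^{\,2g} \cong \R^{6g},
\]
whose kernel is precisely the space of self stresses admitting a single-valued lift. I would check that $M$ is well defined, i.e.\ independent of the spanning tree and base face, by confirming that altering these choices shifts every face by one common global affine function, which is exactly the quantity quotiented out in the target and in the definition of a lift.

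The theorem then follows by rank--nullity: since $\dim (\mathrm{im}\, M) \le 6g$, we have $\dim \ker M \ge \dim H_1\calf - 6g$, so the hypothesis $\dim H_1\calf > 6g$ gives $\dim \ker M \ge 1$ and hence a nontrivial liftable self stress. I expect the main obstacle to be the bookkeeping that pins the monodromy target at exactly $\R^3$ per cycle rather than at something larger. The key is to isolate the two surviving invariants around each loop. The gradient periods assemble into a map to $\R^{4g}$ whose kernel is the locus of stresses admitting a reciprocal figure---this is precisely the obstruction whose vanishing Theorem~\ref{thm:genus} exploits---while the remaining $2g$ constant periods form the genuinely new height obstruction. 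This decomposition both recovers the threshold $4g + 2g = 6g$ and exhibits Theorem~\ref{thm:genuslift} as the reciprocal-realization statement of Theorem~\ref{thm:genus} supplemented by a scalar integration of heights.
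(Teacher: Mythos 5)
Your argument is correct and is essentially the paper's own proof with the homological algebra unwound: your period map $M$ is exactly the map $\phi: H_1 \calf \to H_1 \overline{A\R^2} \iso \R^{6g}$ induced by the inclusion $\calf \iso \calz \hookrightarrow \overline{A\R^2}$ from Section~\ref{sec:polyhedral_liftings}, your spanning-tree integration together with the vanishing of contractible monodromy is the explicit content of the exactness statement $\ker\phi = \im\vartheta$ (liftable stresses are precisely those in the image of the connecting homomorphism from $H_2\cala$), and your rank--nullity step is the paper's observation that $\dim H_1\calf > 6g$ forces $\phi$ to be non-injective. Your closing decomposition $6g = 4g + 2g$ into gradient periods (the obstruction of Theorem~\ref{thm:genus}) and constant height periods is a nice refinement that the paper does not make explicit.
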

	
	The existence of non-trivial reciprocal diagrams and polyhedral lifts has previously been 
	linked to zero ``face-edge cycles'' by Crapo and Whiteley \cite{Crapo1993}. The two 
	bounds above simplify and streamline the discovery process.
	%
	\section{Sheaves, Cosheaves, and Structures}
	\label{sec:allsheaves}
	%
	Cellular sheaves and cosheaves are discrete mathematical structures that assign 
	algebraic
	data to the cells of a topological complex. These structures offer a versatile 
	scaffolding for modeling a variety of abstract and physical phenomena. By assigning 
	vector spaces to cells and defining linear maps between adjacent cells, these models 
	allow for the representation of distributed vector-valued data. The pursuit of globally 
	consistent data, where vectors assigned to cells adhere to all local linear constraints, 
	leads to a general concept of global equilibrium. 
	
	For more comprehensive background on sheaves and cosheaves the theses of Curry 
	\cite{Curry2013} and Hansen \cite{hansen2020laplacians} are excellent sources. These 
	provide a deeper background and understanding of the concepts presented here.
	
	\subsection{Cellular Cosheaves}
	\label{sec:cosheaves}
	Cosheaves allocate individualized vector spaces to individual cells, allowing for a 
	localized 
	assignment of data.
	
	\begin{definition}[Cellular Cosheaf] \label{def:cosheaf}
		Given a finite regular  cell complex\footnote{Here all cells are topological disks 
			with boundaries comprised of incident lower dimensional cells. See 
			\cite{Curry2013} for 
			a complete definition.} $X$, a {\it cellular cosheaf} $\calk = \calk_X$ over $X$ 
			consists of 
		the assignment
		\begin{itemize}
			\item to each cell $c \in X$ a finite dimensional vector space $\calk_c$
			with inner product called the {\it stalk} of $\calk$ at $c$ and,
			\item to incident cells $c\lhd d$ in $X$ a linear {\it extension map} $\calk_{d\rhd 
				c}: \calk_d \to \calk_c$.
		\end{itemize}
	\end{definition}
	
	Cosheaves, by nature, map data {\it downward} in cell dimension. 
	Extension maps transfer data from face stalks to edge stalks, and subsequently from 
	edge stalks to vertex stalks. By selecting appropriate bases for these stalks, extension 
	maps can 
	be explicitly represented as matrices. While definition \ref{def:cosheaf} is extremely 
	general, the data representation and behavior of a specific cosheaf can be {\it 
		programmed} by choosing stalks and extension maps.
	
	\begin{definition}[Constant Cosheaf]\label{def:constantcosheaf}
		For $V$ a finite dimensional vector space, the {\it constant
			cosheaf} $\overline{V}$ over a cell complex $X$ is the cosheaf with stalks 
		$\overline{V}_c = V$ over every cell $c$. Extension maps $\overline{V}_{d\rhd c}$ are
		the identity map for each pair of incident cells $c\lhd d$.
	\end{definition}
	
	The constant cosheaf is an elementary cosheaf that assigns ambient vector data to all 
	cells of the topological space $X$. In most cases, an Euclidean space $\R^n$ is selected 
	for $V$, within which other geometric data can be measured and quantified.
	
	With the data of a cosheaf being inherently decentralized, it is necessary to gather and 
	consolidate this data to discern global structure. This total aggregation of stalks across 
	all cells is known as a {\it chain complex}. Within a chain complex, vector data is 
	brought together and organized based on the dimension of cells involved.
	
	\begin{definition}[Chains] \label{def:chains}
		Given a cosheaf $\calk$ over a cell complex $X$, a (Borel--Moore) $i$-{\it 
			chain} is a formal sum $x = \sum x_c$ of terms $x_c \in \calk_c$, where
    $c$ has cell dimension $i$. The {\it boundary} of an $i$-chain $x$ is an $(i-1)$-chain $\bdd x$ that 
		evaluates to
		\begin{equation}
			(\bdd x)_c = \sum_{c \lhd d_i}[ c : d_i ]\calk_{d\rhd c} x_{d_i} 
		\end{equation}
		on an $(i-1)$-dimensional cell $c$.
	\end{definition}
	
	Here, $[\bullet:\bullet] \in \{-1, 0, 1\}$ is a proxy for local orientation of cells called a {\it 
		signed incidence relation}. For an incident pair $c\lhd d$, if 
	the cells' local orientations agree, we set $[c : d]= 1$; otherwise we set $[c : d] = -1$. A 
	singed incidence relation must satisfy the following general rules:
	\begin{itemize}
		\item (Adjacency) $[c:d]\neq 0$ if and only if $c\lhd d$ and $\dim c + 1 = \dim d$.
		\item (Directed Edges) $[u:e][v:e] = -1$ for an edge with incident vertices $u,v\lhd e$.
		\item (Regularity) For any $b\lhd d$, $\sum_{c} [b : c][c : d] = 0$.
	\end{itemize}
  Note that the adjacency condition implies that a signed incidence relation
  encodes the entire structure of a cell complex.
	
	Chains are assembled into a sequence of vector spaces of $i$-chains connected by 
	linear boundary maps.
	
	\begin{definition}[Chain Complex]\label{def:chaincomplex}
		Given a cosheaf $\calk$ over a cell complex $X$, its {\it 
			chain complex} is the sequence of vector spaces of chains and boundary 
		maps given by
		\begin{equation}
			C_i \calk = \bigoplus_{\dim c = i} \calk_c \qquad \qquad \bdd_i: C_i \calk \to C_{i-1} 
			\calk.
		\end{equation}
		The $i$-th boundary map $\bdd_i$ sends an $i$-chain to its 
		boundary $(i-1)$-chain.
	\end{definition}
	
	Chain complexes are fundamental objects in algebraic topology 
	\cite{AlgebraicHatcher2002}, best thought of as a linear-algebraic expansion of the data
	containted within the cosheaf. Such an expansion contains a mixture of essential and 
	redundant
	information. The most efficient compression of this data structure to its core essentials 
	is a classical construction known as {\it homology}.
	
	\begin{definition}[Homology] \label{def:homology}
		Given the chain complex of a cellular cosheaf $\calk$ over a cell complex $X$, its 
		{\it homology} is the sequence of quotient vector spaces $H_i \calk = \ker \bdd_i / 
		\im \bdd_{i+1}$.
	\end{definition}
	
	The {\it kernel} of the boundary map $\bdd_i$ is a subspace of $C_i \calk$. The {\it 
	image} of $\bdd_{i+1}$ 
	is a subspace of $C_i \calk$. Thanks to the regularity of signed incidence relations, it is a 
	fact that $\bdd_{i} 
	\circ \bdd_{i+1} = 0$. Thus, the image of $\bdd_{i+1}$ is also a subspace of the kernel of 
	$\bdd_i$, 
	allowing for a well-defined quotient space $H_i \calk = \ker \bdd_i / \im \bdd_{i+1}$. 
	This $i$-th homology of $\calk$, 
	$H_i\calk$, is therefore a vector space of equivalence classes of {\it $i$-cycles} modulo 
	{\it $i$-boundaries}.
	
	\begin{example}[Classical Cellular Homology]
		\label{ex:classic}
		Consider the case of a graph $X$ with a constant cosheaf $\overline{\R}$ of 
		1-dimensional stalks. 
		The chain complex consists of $C_0\overline{\R}$ and $C_1 \overline{\R}$ with bases 
		the set of vertices and edges respectively. Choosing an (arbitrary) orientation on 
		edges, the 
		resulting boundary map $\bdd: C_1 \overline{\R} \to C_0 \overline{\R}$ is (in more 
		pedestrian
		language) the oriented incidence matrix of the graph. This boundary operator sends a 
		basis edge $e$ to the formal difference of its vertices $\bdd e = u-v$ respecting the 
		orientation of $e$.
		
		The kernel of $\bdd$ is the vector space $H_1 \overline{\R}$ spanned by 
		the oriented cycles in $X$. Here, each 1-cycle explicitly refers to a cycle of the graph. 
		The quotient vector space $H_0 \overline{\R} = C_0 \overline{\R}/\im \bdd$ is simply 
		the {\it 
			cokernel} of $\bdd$; it has basis corresponding to connected components of $X$. 
			So, for 
		example, $\dim H_0\overline{\R}=1$ if and only if the graph $X$ is connected.
		
		This example generalizes to arbitrary regular cell complexes: the homology of the 
		constant cosheaf is the ``classical'' cellular homology of the complex.
	\end{example}
	
	In the same manner that the homology of a simple constant cosheaf captures global 
	topological
	features of the underlying complex -- connectivity, cycles, and more -- a 
	``well-programmed'' cosheaf 
	can encode intricate topological features whose global qualities are revealed by 
	homology.
	
	\subsection{Cellular Sheaves and Duality}
	\label{sec:sheaves}
	In cosheaves, information flows downward in cell dimension by extension maps. The 
	dual data structure is a {\it sheaf}, where information flows up in dimension.
	
	\begin{definition}[Cellular Sheaf]
		Given a cell complex $X$, a {\it cellular sheaf} $\calj = \calj_X$ 
		over $X$ consists of the assignment
		\begin{itemize}
			\item to each cell $c\in X$ a finite dimensional vector space $\calj_c$ with 
			inner product called the {\it stalk} of $\calj$ at $c$ and,
			\item to incident cells $c \lhd d$ in $X$ a linear {\it restriction map} $\calj_{c\lhd 
			d}: 
			\calj_c \to \calj_d$.
		\end{itemize}
	\end{definition}
	Dual to cosheaves, global assignments of data to cells are known as {\it cochains}, 
	which have a {\it coboundary} one dimension higher.
	
	\begin{definition}[Cochains]\label{def:cochains}
		Given a sheaf $\calj$ over a cell complex $X$, an $i$-{\it 
			cochain} is a formal sum of terms $y = \sum y_c$ across all stalks of cells $c$ of 
		dimension $i$, where $y_c\in \calj_c$. The {\it coboundary} of 
		an $i$-cochain $y$ is an $(i+1)$-cochain $\delta y$ that takes value over an 
		$(i+1)$-dimensional cell $d$:
		\begin{equation}
			(\delta y)_d = \sum_{c_i \lhd d}[ c_i : d ]\calj_{c_i\lhd d} y_{c_i} .
		\end{equation}
	\end{definition}
	Here $[\bullet : \bullet]$ is the same signed incidence relation for cosheaves as in 
	Section~\ref{sec:cosheaves}. Cochains assemble into a {\it cochain complex}.
	
	\begin{definition}[Cochain Complex]\label{def:cochaincomplex}
		Given a cosheaf $\calj$ over a cell complex $X$, its {\it cochain complex} is the 
		sequence of 
		vector spaces of cochains with {\it coboundary maps}
		\begin{equation}
			C^i \calj = \bigoplus_{\dim c = i} \calj_c \qquad \qquad \delta^i: C^i \calj \to C^{i+1} 
			\calj.
		\end{equation}
	\end{definition}
	
	\begin{definition}[Cohomology]\label{def:cohomology}
		Given a cellular sheaf $\calj$ over a cell complex $X$, its $i$-th {\it sheaf 
		cohomology} is 
		the quotient space $H^i \calj = \ker \delta^i/ \im \delta^{i-1}$ 
		with cohomology classes consisting of equivalence classes of {\it cocycles}.
	\end{definition}
	
	Converting stalks to their linear dual spaces, sheaves are dual to cosheaves. For a real 
	vector space $V$, let $V^\vee$ denote its dual space of linear functionals $V\to \R$. If 
	$V$ is spanned by column vectors then $V^\vee$ is spanned by row vectors. For every 
	linear map $\phi: V\to W$, its adjoint $\phi^\vee= (-\circ \phi) : W^\vee\to V^\vee$ 
	acts by precomposition with the diagram
	\begin{equation}\label{eq:linprecomp}
		\begin{tikzcd}
			V \ar[rr, "\phi"] \ar[dr, "\psi\circ\phi"'] & & W \ar[dl, "\psi"]\\
			&\R&
		\end{tikzcd}
	\end{equation}
	commuting for $\psi\in W^\vee$. Applied to cosheaves, extension maps are 
	precomposed.
	
	\begin{definition}[Linear Dual Sheaf]
		For a cosheaf $\calk$ define its {\it linear dual sheaf} $\calk^\vee$ by setting
		stalks $(\calk^\vee)_c=(\calk_c)^\vee$. For an incident pair $c\lhd d$ we
		define the restriction map $\calk_{c\lhd d}^\vee = (-\circ \calk_{d\rhd
			c}): \calk_c^\vee \to \calk_d^\vee$ as precomposition by $\calk_{d\rhd c}$, as in 
		Diagram~\ref{eq:linprecomp}.
	\end{definition}
	
	As with all linear maps between finite dimensional vector spaces, there are
	equalities $\ker \bdd^\vee = (\im \bdd)^\perp$ and $\im \bdd^\vee = (\ker
	\bdd)^\perp$. These relations induce isomorphisms on (co)homology.
	
	\begin{theorem}{\cite{Curry2013}} \label{thm:linear_homology}
		Taking linear duals preserves
		(co)homology: $H_i \calk \iso H^i \calk^\vee$ are isomorphic for each $i$.
	\end{theorem}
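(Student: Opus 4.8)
The plan is to recognize the cochain complex of $\calk^\vee$ as the linear dual of the chain complex of $\calk$, and then to push the given orthogonality relations through the definition of homology. First I would observe that, because the linear dual of a direct sum is the direct sum of the duals, there is a canonical identification $C^i\calk^\vee = \bigoplus_{\dim c = i}(\calk_c)^\vee = (C_i\calk)^\vee$ at the level of cochain groups. The crucial step is then to verify that, under this identification, the coboundary $\delta^i$ coincides with the adjoint $\bdd_{i+1}^\vee$ of the boundary map. This is a direct comparison of the formulas in Definitions~\ref{def:chains} and~\ref{def:cochains}: both use the \emph{same} signed incidence relation $[\,\bullet:\bullet\,]$, and the dual-sheaf restriction map $\calk^\vee_{c\lhd d} = (-\circ\calk_{d\rhd c})$ is by construction the adjoint of the extension map $\calk_{d\rhd c}$. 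Summing over incident pairs $c\lhd d$ with $\dim c = i$ and $\dim d = i+1$ shows that $\delta^i$ is precisely $\bdd_{i+1}^\vee$; the only thing to keep straight is the index shift, namely that $\delta^i$ is dual to $\bdd_{i+1}$ rather than to $\bdd_i$.

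With $\delta^i = \bdd_{i+1}^\vee$ in hand, I would rewrite the cohomology $H^i\calk^\vee = \ker\delta^i/\im\delta^{i-1} = \ker\bdd_{i+1}^\vee/\im\bdd_i^\vee$ using the two orthogonality relations stated just before the theorem. These give $\ker\bdd_{i+1}^\vee = (\im\bdd_{i+1})^\perp$ and $\im\bdd_i^\vee = (\ker\bdd_i)^\perp$, so that $H^i\calk^\vee = (\im\bdd_{i+1})^\perp/(\ker\bdd_i)^\perp$, with all annihilators taken inside $(C_i\calk)^\vee$. Writing $A = \im\bdd_{i+1}$ and $B = \ker\bdd_i$, the relation $\bdd_i\circ\bdd_{i+1}=0$ guarantees $A \subseteq B \subseteq C_i\calk$, and $H_i\calk = B/A$ by definition.

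The final step is the standard linear-algebra duality for nested subspaces: for $A\subseteq B\subseteq V$ with $V$ finite dimensional, restriction of functionals along $V^\vee \to B^\vee$ carries $A^\perp$ onto the functionals on $B$ that vanish on $A$, with kernel exactly $B^\perp$, yielding a canonical isomorphism $A^\perp/B^\perp \iso (B/A)^\vee$. Applying this to $A\subseteq B\subseteq C_i\calk$ identifies $H^i\calk^\vee \iso (H_i\calk)^\vee$, and since $H_i\calk$ is finite dimensional it is (noncanonically) isomorphic to its dual, giving $H_i\calk \iso H^i\calk^\vee$ as claimed. I expect the main obstacle to be purely bookkeeping: correctly matching the incidence signs and the index shift in the first step so that $\delta^i$ is genuinely the adjoint of $\bdd_{i+1}$. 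Once that identification is secure, the remaining steps are formal consequences of the quoted orthogonality relations and finite-dimensional duality.
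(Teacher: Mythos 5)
Your proposal is correct and follows essentially the same route the paper sketches just before the theorem statement: identify $\delta^i$ with the adjoint $\bdd_{i+1}^\vee$, invoke the relations $\ker \bdd^\vee = (\im \bdd)^\perp$ and $\im \bdd^\vee = (\ker \bdd)^\perp$, and conclude via finite-dimensional duality of the quotient $\ker\bdd_i/\im\bdd_{i+1}$. Your index bookkeeping ($\delta^i = \bdd_{i+1}^\vee$, not $\bdd_i^\vee$) and the nested-subspace duality $A^\perp/B^\perp \iso (B/A)^\vee$ are exactly the details the paper leaves to the cited reference.
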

	
	The second method we will utilize in relating cosheaves and sheaves involves dualizing 
	the 
	underlying cells of $X$ while leaving the data assignments unchanged. If $X$ is an 
	oriented manifold, the Poincar\'{e} dual cell structure $\tilde{X}$ has cell incidence and 
	dimension flipped. Let dual cells have their incidence relations be preserved, so 
	$[\tilde{d} : \tilde{c}] = [c : d]$. With top dimensional cells of $X$ having all
	the same orientation, this relation on $\tilde{X}$ satisfies the requirements of a signed 
	incidence relation.
	
	\begin{definition}[Poincar\'e Duality]\label{def:poincare}
		Suppose $\calk = \calk_X$ is a cosheaf over an oriented $n$-manifold $X$. Define its
		{\it Poincar\'{e} dual sheaf} $\tilde{\calk} = \tilde{\calk}_{\tilde{X}}$ over the dual cell 
		complex $\tilde{X}$ by taking identical stalks $\tilde{\calk}_{\tilde{c}} = \calk_c$ and 
		maps $\tilde{\calk}_{\tilde{d}\lhd \tilde{c}} = \calk_{d\rhd c}$ for $c\lhd d$ in $X$.
	\end{definition}
	
	Both the cosheaf $\calf$ over $X$ and the sheaf $\tilde{\calf}$ over
	$\tilde{X}$ encode the same information as their stalks and
	extension/restriction maps are identical. The difference is their domains;
	$\calf$ is a cosheaf over $X$ and $\tilde{\calf}$ is a sheaf over $\tilde{X}$.
	
	Changing the cell structure at the level of cochains formally leads to the
	isomorphism $C_k \calf \iso C^{n-k}\tilde{\calf}$ for
	$0\leq k\leq n$. The boundary $\bdd$ and coboundary $\tilde{\bdd}$ each
	perform the same operation on dual elements, leading to the isomorphism $H_k \calf 
	\iso 
	H^{n-k} \tilde{\calf}$ in (co)homology known as {\it Poincar\'e Duality}.\footnote{We 
		note that if $X$ is closed, then $\tilde{X}$ is open and vice-versa. This matters in 
		particular when $X$ is a manifold with boundary. In this case, Poincar\'e duality is 
		called {\it Lefschetz duality} and links homology with relative cohomology 
		\cite{AlgebraicHatcher2002}.}
	
	\begin{example}
		Suppose $\overline{V}$ is a constant cosheaf over $X$. Both the linear dual 
		$\overline{V}^\vee$ and the Poincar\'e dual $\tilde{\overline{V}}$ are sheaves, but 
		these are over different spaces $X$ and $\tilde{X}$. Combining these operations, 
		$\tilde{\overline{V}}^\vee$ is a cosheaf over $\tilde{X}$ and is what we will call the 
		{\it 
			reciprocal cosheaf} to $\overline{V}$. All of these structures hold the same 
			information 
		(with isomorphic (co)homology).
	\end{example}
	
	\subsection{Laplacians and Diffusion}
	\label{sec:sheaflaplacian}
	One final ingredient remains which connects sheaves and cosheaves, cohomology and 
	homology, with dynamics. 
	This is a generalization of the graph Laplacian to the setting of cellular sheaves, known 
	as a {\it sheaf
		Laplacian} \cite{Hansen2019}. Given a cellular sheaf with coboundary map $\delta$, 
		define the 
	Laplacian as 
	\begin{equation}
		\label{eq:sheaflaplacian}
		\Delta = (\delta + \delta^\vee)^2 = \delta^\vee\circ\delta + \delta\circ\delta^\vee .
	\end{equation}
	When restricted to 0-cochains, this Laplacian takes the simpler form of 
	$\delta^\vee\circ\delta$, 
	pushing data from vertices out to incident edges, then pulling back and incorporating
	data pushed 
	from neighboring vertices out to the edges. 
	
	Using the sheaf Laplacian on 0-cochains as a diffusion operator leads to a heat equation 
	on cochains
	with well-behaved properties. The following results from The following results are 
	relevant. 
	\begin{theorem}[\cite{Hansen2019,hansen2021opinion}]
		\label{thm:sheafdiffusion}
		Given a cellular sheaf of finite-dimensional inner product spaces and $\xi_0\in C^0$ a 
		0-cochain, 
		the differential equation
		\begin{equation}
			\label{eq:heat}
			\frac{d}{dt}\xi = -\alpha\Delta\xi \quad ; \quad \alpha>0,
		\end{equation}
		with $\xi_0$ as initial condition has solutions which converge to the nearest 
		cohomology class 
		$\xi_\infty\in H^0 = \ker\Delta$. 
	\end{theorem}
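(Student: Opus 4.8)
The plan is to read (\ref{eq:heat}) as a linear, autonomous ODE governed by the self-adjoint operator $\Delta$ acting on the finite-dimensional inner product space $C^0$, and to extract its limiting behavior from the spectral theorem together with the elementary theory of the matrix exponential. The whole argument reduces to understanding the spectrum of $\Delta$ on $0$-cochains and matching its kernel to $H^0$.

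First I would record the structure of the restricted Laplacian. As already noted after (\ref{eq:sheaflaplacian}), since there are no $(-1)$-cells the summand $\delta\circ\delta^\vee$ vanishes on $C^0$, so $\Delta = \delta^\vee\circ\delta$ there. With respect to the chosen stalk inner products this operator is self-adjoint and positive semidefinite, because $\langle \Delta\xi,\xi\rangle = \langle \delta\xi,\delta\xi\rangle = \|\delta\xi\|^2 \ge 0$. The same identity forces $\Delta\xi = 0$ if and only if $\delta\xi = 0$, whence $\ker\Delta = \ker\delta^0$; and since $\delta^{-1}=0$ we have $H^0 = \ker\delta^0/\im\delta^{-1} = \ker\delta^0$. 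Thus $\ker\Delta = H^0$ exactly, which is the crux that ties the dynamics to cohomology.

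Next I would invoke the spectral theorem: self-adjointness of $\Delta$ on $C^0$ yields an orthonormal eigenbasis with real eigenvalues $0 = \lambda_1 = \cdots = \lambda_m < \lambda_{m+1} \le \cdots$, the zero eigenspace being precisely $H^0 = \ker\Delta$. The unique solution of the initial value problem is $\xi(t) = e^{-\alpha t\,\Delta}\,\xi_0$. Writing $P$ for the orthogonal projection of $C^0$ onto $\ker\Delta$ and decomposing $\xi_0 = P\xi_0 + (I-P)\xi_0$, I would observe that $e^{-\alpha t\,\Delta}$ fixes $P\xi_0$ (it lies in the zero eigenspace) while contracting the complementary part by a factor bounded by $e^{-\alpha\lambda_{m+1}t}$, which tends to $0$ as $t\to\infty$. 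Hence $\xi(t)\to P\xi_0 =: \xi_\infty \in H^0$, with exponential convergence rate set by the spectral gap $\alpha\lambda_{m+1}$.

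Finally I would justify that $\xi_\infty$ is the \emph{nearest} cohomology class: because $H^0$ is a linear subspace of the inner product space $C^0$, the orthogonal projection $P\xi_0$ is by definition the unique point of $H^0$ minimizing $\|\xi_0 - \eta\|$ over $\eta\in H^0$. There is no deep obstacle in this proof; it is standard linear algebra once the two structural facts are in hand. The only points demanding care are the vanishing of the $\delta\circ\delta^\vee$ term on $0$-cochains, which hinges on the absence of lower cells, and the identification of ``nearest'' with orthogonal projection, which relies on positive definiteness of the inner product on $C^0$.
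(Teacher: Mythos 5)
Your proof is correct and is essentially the standard argument: the paper itself does not prove this theorem but defers to the cited references \cite{Hansen2019,hansen2021opinion}, which establish it by exactly this route --- identifying $\Delta|_{C^0}=\delta^\vee\circ\delta$ as self-adjoint positive semidefinite with $\ker\Delta=\ker\delta^0=H^0$, solving via the matrix exponential, and recognizing the limit as the orthogonal projection onto the kernel. No gaps; the two points you flag as needing care (vanishing of $\delta\circ\delta^\vee$ on $0$-cochains and the identification of ``nearest'' with orthogonal projection) are precisely the right ones.
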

	One can likewise define Laplacians which act on higher-dimensional cochains, or, via 
	dualizing, which act on cosheaves.
	We will use the sheaf Laplacian in Sections \ref{sec:stiffness} and \ref{sec:position}  to 
	address issues of stiffness and more. 
	
	\section{Sheaves and Cosheaves for Linkages}
	\label{sec:constructions}
	
	This section introduces several novel cellular sheaves and cosheaves for linkages
	modelled as cell complexes realized in a Euclidean space.
	A {\it realization} of an abstract cell complex $X$ is a map $p$ from the vertex set of $X$ to
 	$\R^n$ that assigns to each vertex explicit coordinates. 
	We will generally require coordinates to be unique so that 
	edges are realized as nonsingular lines. We will call the combined system $(X,p)$ a {\it 
	diagram}, the term most used in the literature surrounding graphic statics 
	\cite{MechanismsMitchell2016} (alternatively Crapo and Whiteley use the term 
	``framework'' in their influential work \cite{Crapo1993} --- further used in rigidity 
	theory \cite{FrameworksConnelly2015}).
	
	\subsection{The Force Cosheaf} 
	\label{sec:force}
	
	We begin our constructions with the force cosheaf, denoted $\calf$ (see 
	Figure~\ref{fig:force_sketch}). This cosheaf provides an accurate description 
	of the static loads and mechanisms of a pin-jointed truss. The underlying 
	geometry of the base linkage $X$ is realized in a Euclidean space $\R^n$.
	
	\begin{figure}[ht]
		\centering \includegraphics[scale=0.6]{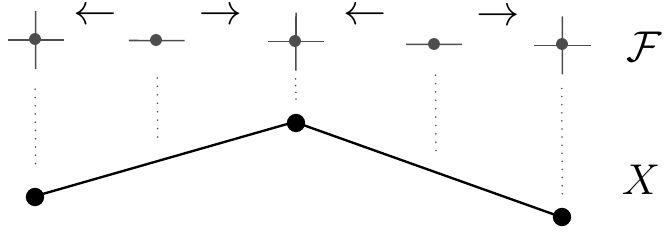}
		\caption{An abstract sketch of the force cosheaf $\calf$ over $\R^2$ is pictured. The 
		stalks of $\calf$	are situated ``above'' the cells of $X$ with linear maps respecting	
		incidence.}
		\label{fig:force_sketch}
	\end{figure}
	
	Although this initial paper focuses on the planar setting ($n=2$), we present the
	definitions in more generality to permit higher-dimensional graphic statics in future 
	work. 
	
\begin{definition}[Force Cosheaf]
\label{def:forcecosheaf}
	Let $(X,p)$ be a diagram with injective realization. The {\it force cosheaf} $\calf$ 
	over $(X, p)$ has stalks that keep track of internal and external forces as follows. 
	Each vertex $v$ has stalk $\calf_v = \R^n$ thought of as 
	a tangent space at $p(v)$ with unit basis vectors $\partial_i=\partial/\partial x_i$. 
	Each edge $e$ has stalk $\calf_e=\R$ thought of as a 1-D space tangent to the edge 
	with basis vector $\partial_t$. For an edge $e$ connecting vertices $u$ and $v$, the map 
	$\calf_{e\rhd v}$ sends the unit basis vector $\partial_t$ to the unit normalization
	of the vector $p(v)-p(u)$. All stalks over higher-dimensional cells of $X$ vanish.
\end{definition}

\begin{remark}
	One can profitably think of the geometric realization of each edge $e$ with endpoints 
	$u\lhd e$ and $v\lhd e$ as the straight interval passing through the points $p_v$ and $p_u$. 
 	From this perspective, the stalk $\calf_e$ can be thought of as the 1-D vector subspace ${\bf e} = 
	\text{span}\{p_v - p_e\}$ of $\R^n$ parallel to the realization of $e$ induced by $p$. 
	For an incident vertex $v$, the map $\calf_{e \rhd v}$ is the inclusion of this subspace 
	${\bf e}$ into $\R^n = \calf_v$, {\em cf.} the inclusion of the tangent space of a submanifold 
	into the ambient manifold. See Example \ref{ex:force_embed} for an instance where this perspective 
	is useful.
\end{remark}

The stalks represent forces applied at the joints (vertex stalks) and axial forces or stresses
along the linkage element (edge stalks). The extension maps record how axial forces are 
distributed to the pin joints. Since $C_i\calf=0$ for $i > 1$, there are at most two non-zero 
homologies, determined by the boundary map $\bdd : C_1 \calf \to C_0 \calf$ : the kernel of $\bdd$, 
$H_1 \calf$, and the cokernel of $\bdd$, $H_0 \calf$. The chains and homologies of $\calf$ have 
the following useful interpretations in structural mechanics.
	
	\begin{itemize}
		\item Each 0-chain in $C_0 \calf$ is a distribution of {\it static loads}
		applied to vertices, i.e., a force vector in $\R^n$ applied to each vertex. 
		\item Each 1-chain in $C_1 \calf$ is a distribution of internal {\it axial forces}, or {\it 
			stresses} over the edges. The sign of this stress value in combination with the local 
		orientation of the edge determines whether the stress is a tension (positive) or 
		compression (negative). 
		\item The boundary map $\bdd : C_1 \calf \to C_0 \calf$ distributes 
		the forces resulting from internal stresses over edges to adjacent vertices, summing
		at each vertex.
		\item The 1st homology $H_1 \calf = \ker \bdd$ is the space of {\it axial self
			stresses} or {\it equilibrium stresses} of the truss. Each self stress satisfies an 
		equilibrium equation over vertices, resulting in a net-zero force everywhere. For 
		example, Figure~\ref{fig:force_boxed} portrays a self-stressed truss.
		\item The 0th homology $H_0 \calf = \coker\ \bdd$ is the space of {\it constrained
			degrees of freedom} of vertices. These are assignments of forces to
		vertices that cannot result from tensile or compressive force within edges.  
		These vertex forces then must instead impart {\it infinitesimal accelerations} to the
		structure: rotations, translations, or mechanisms: see Section~\ref{sec:linkage}.
	\end{itemize}
	
	We emphasize that struture and homology of $\calf$ are highly dependent on the geometry of the 
	diagram $(X,p)$. Varying the realization $p$ will change the force cosheaf,
  even if the topology of $X$ does not change.
 	This is why, in the context of graphic statics, $(X,p)$ -- the {\it form diagram} -- is the
	crucial object of study. The force cosheaf is a richer algebraic structure
  that reveals the deeper implications of the form diagram.
	
	\begin{example}[Rigidity]\label{ex:rigidity}
		The zeroth homology detects rigidity. A diagram $(X,p)$ in $\R^n$ 
		is {\it rigid} if there are no {\it mechanisms} -- that is, if the only nonzero infinitesimal
		accelerations generate the special Euclidean group of rotations and translations. By 
		the above interpretation of the force cosheaf homology, the diagram is rigid if and only if 
		the dimension of $H_0 \calf$ is $\binom{n+1}{2}$, the dimension of the 
		special Euclidean group of global translations and rotations in $n$ dimensional space. 
		For example, an $n$-simplex realized in $\R^n$ with generic vertex coordinates is 
		rigid as well as Figure~\ref{fig:force_boxed}. For non-rigid graphs, other generators of 
		$H_0 \calf$ correspond to {\it mechanisms}.
	\end{example}
	
	\begin{figure}[ht]\centering
		\includegraphics{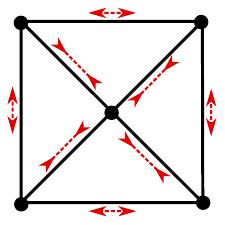}
		\caption{A diagram $(X,p)$ is pictured where $\dim C_0 \calf = 10$ and $\dim C_1 
			\calf = 8$. The boundary map is not injective, with kernel 
			spanned by the cycle that takes values $1$ on the four outside edges (tension) and 
			takes values $-\sqrt{2}$ on the four interior edges (compression). 
			This 1-cycle is a generator of $H_1 \calf \cong \R$.}
		\label{fig:force_boxed}
	\end{figure}

	\subsection{The Linkage Sheaf} 
	\label{sec:linkage}
	
	The linear dual of the force cosheaf is what we will call the {\it linkage sheaf} modeling 
	the kinematics of a bar-and-joint linkage. The {\it principle of virtual work} implies that 
	displacements are dual to forces and that the 
	displacement coboundary map $\delta$ is the adjoint of the force boundary map 
	$\bdd^\vee$ \cite{Calladine1978}.
	
	\begin{definition}[Linkage Sheaf]
	\label{def:linkagesheaf}
		The {\it linkage sheaf} $\calf^\vee$ is the linear dual of the force cosheaf $\calf$. 
		Again, fixing a diagram $(X,p)$, vertex stalks $\calf^\vee_v$ are dual spaces 
		$(\R^n)^\vee$ with basis 1-forms $dx_i$; the edge stalks are one-dimensional dual 
		spaces $\calf^\vee_e \iso \R^\vee$ with basis $dt$, and all stalks over higher 
		dimensional cells vanish. For each edge $e$ between $u$ and $v$, the restriction 
		map $\calf^\vee_{v\lhd e}:(\R^n)^\vee \to {\R}^\vee$ is the
		linear dual of the map $\calf_{v\lhd e}:\R\to\R^n$, sending a covector $\alpha\in(\R^n)^\vee$ at $v$ 
  		to the multiple of $dt$ with coefficient $\alpha(p_e)$ where $p_e$ is the unit 
  		normalization of the vector $[u:e](p_u - p_v)$.
	\end{definition}

If one thinks of the force cosheaf as having a map $\calf_{e \rhd v}$ given by the inclusion 
of the subspace ${\bf e}$ ``parallel'' to the geometric edge into $\R^n$, then the linkage 
sheaf map $\calf_{v\lhd e}:(\R^n)^\vee\to{\bf e}^\vee$ is its linear dual.

 	\begin{itemize}
		\item The space of 0-cochains $C^0 \calf^\vee$ consists of {\it infinitesimal 
		displacements} of 
		vertices in $\R^n$. 
		\item The space of 1-cochains $C^1 \calf^\vee$ is the space of {\it axial extensions 
		and contractions}. Normalizing by the length of each edge gives the {\it strain}. 
		\item The coboundary map $\bdd^\vee: C^0 \calf^\vee \to C^1 \calf^\vee$ computes 
		the infinitesimal or 1st-order changes in the lengths of edges resulting from a given displacement of 
		the truss vertices.
		\item The 0th cohomology $\ker\bdd^\vee = H^0 \calf^\vee$ consists of {\it 
			constrained stiff displacements} assigned to vertices that preserve edge length.
		Each constrained displacement rotates or translates the linkage while those edges 
		remain rigid. 
		\item The 1st cohomology $H^1 \calf^\vee$ consists of equivalence classes of {\it
			unrealizable axial deformations} --- those axial deformations that cannot result 
			from movements of vertices. These can be thought of as strain measurement reading errors.
	\end{itemize}
	
	As these are cotangent spaces (duals to tangent spaces), the displacements to the 
	vertices should be thought of as either infinitesimal or (using an exponential map) 
	1st-order terms 
	in the expansion of the displacement. By Theorem \ref{thm:linear_homology}, the 
	cohomology of the linkage sheaf is isomorphic to the homology of the force cosheaf. 
 	This provides a useful way of re-interpreting each (co)homology.
	
	\begin{example}[Impossible Axial Deformation]
		For the diagram pictured in Figure~\ref{fig:force_boxed}, it is impossible for the four 
		interior edges to grow in length while the four outside edges shrink. This cocycle is a 
		generator for $H^1 \calf^\vee$ and is dual to the self stress. This coincides with 
		the isomorphism $H^1 \calf^\vee \iso H_1 \calf$ following 
		Theorem~\ref{thm:linear_homology}.
	\end{example}
	
	\subsection{Maxwell's Rule and Euler Characteristic}
	\label{sec:rule}
	
	Additional tools from algebraic topology enable elegant reformulations of principles in 
	static analysis. One such tool is the classical {\it Euler characteristic}, adapted 
	to cellular cosheaves. 
	
	Recall the elementary topological fact that for a finite polyhedral spherical surface $X$ 
	with $|V|$ vertices, $|E|$ edges, and $|F|$ faces, the Euler characteristic 
	$\chi\left(X\right)=|V|-|E|+|F|$ equals $+2$, independent of how the spherical surface is 
	discretized. The reason for this is an elementary but fundamental result. For a finite 
	sequence of finite-dimensional vector spaces $C=(C_i)$, its Euler characteristic 
	$\chi(C)$ is the alternating sum of dimensions of the vector spaces. In the context of 
	a chain complex, there is an important and fundamental relationship to homology:
	\begin{lemma}{\cite{AlgebraicHatcher2002}}
	\label{lem:euler}
		The Euler characteristic of a chain complex $C = \left( C_i \right)$ and its homology 
		$H = \left( H_i \right)$ agree:
		\begin{equation}\label{eq:euler}
			\sum_{i}{\left(-1\right)^i \dim C_i} 
			= 
			\chi(C)
			=
			\chi(H)
			=
			\sum_{i}{\left(-1\right)^i \dim H_i},
		\end{equation}
	\end{lemma}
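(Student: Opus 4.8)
The plan is to reduce everything to the rank--nullity theorem applied to each boundary map $\bdd_i : C_i \to C_{i-1}$, so that no topology beyond linear algebra is required. First I would fix the standard subspaces $Z_i = \ker \bdd_i$ (the $i$-cycles) and $B_i = \im \bdd_{i+1}$ (the $i$-boundaries) inside $C_i$, and abbreviate the rank of each boundary map by $r_i = \dim \im \bdd_i$. Because the complex is finite and every term is finite-dimensional, all of these quantities are finite and only finitely many are nonzero.

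The core of the argument is two dimension counts. Rank--nullity for $\bdd_i$ gives $\dim C_i = \dim Z_i + r_i$. Separately, since $H_i = Z_i / B_i$ and $\dim B_i = \dim \im \bdd_{i+1} = r_{i+1}$, the quotient satisfies $\dim H_i = \dim Z_i - r_{i+1}$. Eliminating $\dim Z_i$ between these two identities yields the single clean relation $\dim C_i = \dim H_i + r_i + r_{i+1}$, valid for every $i$.

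From here I would take the alternating sum over $i$. The homology terms reproduce $\chi(H)$; what remains is $\sum_i (-1)^i (r_i + r_{i+1})$, which telescopes. Reindexing the $r_{i+1}$ piece by $j = i+1$ turns $\sum_i (-1)^i r_{i+1}$ into $-\sum_j (-1)^j r_j$, exactly cancelling $\sum_i (-1)^i r_i$, with the end terms vanishing because $r_i = 0$ outside the finite range where the complex is supported. This leaves $\chi(C) = \chi(H)$, as claimed.

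There is no serious obstacle here, since the result is purely formal; the one point that deserves care is the bookkeeping of the telescoping cancellation and the convention that $\bdd_i$ and the associated ranks vanish outside the range of nonzero chain groups. In the applications to the force cosheaf $\calf$, where $C_i \calf = 0$ for $i > 1$, this collapses to the two-term statement $\dim C_0 \calf - \dim C_1 \calf = \dim H_0 \calf - \dim H_1 \calf$, which is the form used to recover Maxwell's counting rule.
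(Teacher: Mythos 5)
Your proof is correct and complete; the paper does not prove this lemma itself but cites Hatcher, whose argument is exactly your rank--nullity decomposition $\dim C_i = \dim H_i + r_i + r_{i+1}$ followed by the telescoping cancellation of the alternating sum. The concluding observation that for the force cosheaf this collapses to $\dim C_0\calf - \dim C_1\calf = \dim H_0\calf - \dim H_1\calf$ matches how the paper applies the lemma in Theorem~\ref{thm:maxwell}.
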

	This is why for a triangulated spherical surface $\chi=2$, since the classical homology of 
	a cell complex comes from the constant cosheaf (Example \ref{ex:classic}) and this 
	cosheaf has Euler
	characteristic given by $|V|-|E|+|F|$, while the homology of a sphere is 
	$\dim H_0 = \dim H_2 = 1$ and $\dim H_k = 0$ for all other $k$. Lemma~\ref{lem:euler} also
 	holds for the cohomology of a cochain complex (by simple duality).
	
	When applied to the force cosheaf (or the linkage sheaf), one immediately generalizes Example 
	\ref{ex:rigidity} to obtain the classic {\it Maxwell's Rule} \cite{ReciprocalMaxwell1864}. 
	The modern form for two dimensional trusses is given in \cite{Calladine1978} as
	\begin{equation}\label{eq:maxwell_rule}
		2|V| - |E| = 3 + |M| - |S| ,
	\end{equation}
	where $|M|$ is the number of linkage mechanisms, $|S|$ is the number of 
	self-stresses, 
	$|V|$ is the number of vertices of the truss, and $|E|$ is the number of its edges. The 
	prevalence of differences suggests that this is related to Euler characteristic. 
	
	\begin{theorem}[Maxwell's Rule in dimension $n$]
		\label{thm:maxwell}
		Given a diagram $(X,p)$ in $\R^n$, the generalized Maxwell's Rule holds:
		\begin{equation}\label{eq:euler_force}
			n|V| - |E| = \binom{n+1}{2} + |M|- |S|
		\end{equation}
	\end{theorem}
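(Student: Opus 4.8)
The plan is to recognize the left-hand side $n|V| - |E|$ as the Euler characteristic of the chain complex of the force cosheaf $\calf$, and the right-hand side as the Euler characteristic of its homology; the result then falls out immediately from Lemma~\ref{lem:euler}.

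First I would read off the chain-group dimensions directly from Definition~\ref{def:forcecosheaf}. Each of the $|V|$ vertex stalks is a copy of $\R^n$ and each of the $|E|$ edge stalks is a copy of $\R$, while all stalks over cells of dimension $\geq 2$ vanish. Hence $\dim C_0\calf = n|V|$, $\dim C_1\calf = |E|$, and $C_i\calf = 0$ for $i\geq 2$, so the complex is concentrated in degrees $0$ and $1$. Its Euler characteristic is therefore
\begin{equation*}
	\chi(C_\bullet\calf) = \dim C_0\calf - \dim C_1\calf = n|V| - |E| .
\end{equation*}

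Next I would invoke Lemma~\ref{lem:euler}, which equates this alternating sum with the corresponding alternating sum of homology dimensions, $\chi(H_\bullet\calf) = \dim H_0\calf - \dim H_1\calf$. It then remains only to translate these two numbers into mechanical data using the interpretations established in Section~\ref{sec:force}. The first homology $H_1\calf = \ker\bdd$ is precisely the space of axial self stresses, so $\dim H_1\calf = |S|$. For the zeroth homology $H_0\calf = \coker\bdd$, Example~\ref{ex:rigidity} identifies a distinguished $\binom{n+1}{2}$-dimensional subspace of infinitesimal rigid motions (the Lie algebra of the special Euclidean group $SE(n)$) that can never be generated by internal axial stresses, together with the convention that the remaining generators are exactly the $|M|$ mechanisms; thus $\dim H_0\calf = \binom{n+1}{2} + |M|$. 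Substituting both values into $\chi(C_\bullet\calf) = \chi(H_\bullet\calf)$ and rearranging yields the claimed identity.

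The one place demanding care is the splitting $\dim H_0\calf = \binom{n+1}{2} + |M|$. This requires the $\binom{n+1}{2}$ infinitesimal rigid motions to be genuinely linearly independent as classes in $H_0\calf$, which can degrade for degenerate realizations (for instance, a configuration whose vertices fail to affinely span $\R^n$ annihilates some of the infinitesimal rotations, lowering the count). Under the standing injectivity hypothesis on $(X,p)$ together with an affinely spanning realization this count is exact, and in any case the number of mechanisms $|M|$ is by definition whatever excess $H_0\calf$ carries beyond the rigid-motion subspace, so the bookkeeping closes regardless. Once the two homologies are identified, no genuinely hard step remains: the theorem is an immediate consequence of the Euler-characteristic identity of Lemma~\ref{lem:euler}.
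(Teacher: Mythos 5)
Your proof is correct and follows the paper's argument exactly: apply Lemma~\ref{lem:euler} to the force cosheaf, read off $\chi(C_\bullet\calf) = n|V| - |E|$, and identify $\dim H_1\calf = |S|$ and $\dim H_0\calf = \binom{n+1}{2} + |M|$. Your added caveat about the rigid-motion count for degenerate realizations is a reasonable refinement but does not change the route.
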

	\begin{proof}
		Apply Lemma~\ref{lem:euler} to the force cosheaf, noting that in dimension $n$, 
		the dimension of $H_0\calf$ equals the sum of the number of the Euclidean motions 
		($n(n+1)/2$) plus the number of linkage mechanisms $|M|$.
	\end{proof}
	
	Dualization implies that the theorem holds as well for the linkage sheaf, which makes
	interpreting the linkage mechanisms clearer and gives an equivalence between the 
	number of self-stresses and the number of unrealizable axial deformations.
	
	\subsection{Stiffness and the Sheaf Laplacian}
	\label{sec:stiffness}
	
	The relationship between stress and strain is encoded in {\it Hooke's Law} for ideal 
	elastic elements. Under deformation, the strain within an axial element is proportional 
	to its force output	by its spring constant $\kappa_e$, parameterized by the
	edges $e$ of the diagram $X$. Multiplying by $\kappa_e$ determines an explicit
  isomorphism between the (one-dimensional) vector space of axial extensions and
  contractions of $e$ (i.e. $\calf^\vee_e$) and the vector space of stresses of
  the element ($\calf_e$). The quadratic form of
  this inner product $\langle a, b \rangle_e = \kappa_e ab$ on $\calf_e^\vee$ represents 
  the work done by a 
  given axial deformation of the element. Using this inner product
  rather than the obvious one induced by the ambient space is useful as a way of
  determining weights for a sheaf Laplacian on the linkage sheaf $\calf^\vee$
  (recall Section~\ref{sec:sheaflaplacian}). We begin with a simple classical example.
	
	\begin{example}
	\label{ex:stiffness}
		For a single edge $e$ in $\R^2$ between vertices at points $p(u)$ and $p(v)$, the 
		component {\it stiffness matrix} $K_e$ for this element is typically defined
    by an equation like
		\begin{equation}\label{eq:local_stiffness}
		K_e 
		=
		\kappa_e
		\begin{small}\begin{bmatrix}
			\cos^2(\theta) & \cos(\theta)\sin(\theta) & -\cos^2(\theta) & 
			-\cos(\theta)\sin(\theta)\\
			\cos(\theta)\sin(\theta) & \sin^2(\theta) & -\cos(\theta)\sin(\theta) & 
			-\sin^2(\theta)\\
			-\cos^2(\theta) & -\cos(\theta)\sin(\theta) & \cos^2(\theta) & 
			\cos(\theta)\sin(\theta)\\
			-\cos(\theta)\sin(\theta) & -\sin^2(\theta) & \cos(\theta)\sin(\theta) & 
			\sin^2(\theta)\\
		\end{bmatrix} \end{small},
		\end{equation}
		where $\theta$ is the angle at which the edge $e$ is inclined in the Euclidean plane. 
		The {\it total stiffness matrix} is the block matrix $K$ comprised of sums of matrices 
		$K_e$ over every edge.
	\end{example}
	
	The stiffness matrix is a symmetric linear operator that takes a vector of infinitesimal 
	displacements of vertices to a vector of forces at the vertices.
  Another perspective is that, as a quadratic form, the stiffness matrix
  represents the infinitesimal work done by an infinitesimal displacement of the
  vertices, or, equivalently, that $\langle \xi, K \xi \rangle$ is (twice) the potential
  energy stored in the truss as a result of the deformation $\xi$.

  We claim that the stiffness matrix is really the sheaf Laplacian of
  $\calf^\vee$ with respect to the inner product given by the spring constants
  $\kappa_e$. This is simple to see from the perspective of the quadratic form,
  beginning with the observation that $\langle  \xi, \Delta \xi \rangle = \langle  \delta \xi, 
  \delta \xi
  \rangle$. Since the coboundary $\delta$ computes the
  infinitesimal edge deformations given by vertex displacements, and the inner
  product computes (and sums) the work done by each edge deformation, this is
  precisely the work done on the truss by the vertex displacement $\xi$. One can
  also show this fact by computing the matrix entries of $\Delta$ and showing
  that they are equal to those of $K_e$ as defined above~\eqref{eq:local_stiffness}, but 
  this is more tedious and less enlightening than the work-based proof.
	
	This interpretation of the stiffness matrix as a weighted sheaf Laplacian allows for the
	following result, an immediate consequence of Theorem~\ref{thm:sheafdiffusion}.
	
	\begin{corollary}
		\label{cor:heatstiffness}
		For $\xi_0\in C^0\calf^\vee$ an initial condition of vertex displacements of a diagram 
		$(X,p)$, the diffusion equation
		\begin{equation}\label{eq:spring_diffeq}
			\frac{d\xi}{dt} = -\Delta_{\calf^\vee} \xi
		\end{equation}
		with $\Delta_{\calf^\vee}$ the sheaf Laplacian of $\calf^\vee$, converges to a vertex 
		displacement 
		$\xi_\infty \in H^0\calf^\vee$ representing a stiff displacement for which all 
		members have zero 
		internal force.
	\end{corollary}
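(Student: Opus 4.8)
The plan is to recognize Corollary~\ref{cor:heatstiffness} as a direct specialization of Theorem~\ref{thm:sheafdiffusion} to the linkage sheaf $\calf^\vee$, so that the work lies entirely in checking that the hypotheses match and in translating the abstract conclusion back into mechanical language. First I would confirm that $\calf^\vee$ satisfies the standing assumptions of Theorem~\ref{thm:sheafdiffusion}: its stalks $(\R^n)^\vee$ over vertices and $\R^\vee$ over edges are finite-dimensional, and the spring-constant quadratic form $\langle a,b\rangle_e = \kappa_e\, ab$ from Section~\ref{sec:stiffness} equips every edge stalk, and hence all of $C^i\calf^\vee$, with a genuine inner product. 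This is exactly the weighting under which the total stiffness matrix $K$ was identified with the sheaf Laplacian $\Delta_{\calf^\vee}$.

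Next I would observe that equation~\eqref{eq:spring_diffeq} is literally the heat equation~\eqref{eq:heat} with diffusion rate $\alpha = 1 > 0$, once the operator $\Delta_{\calf^\vee}$ is read as the weighted sheaf Laplacian rather than as a stiffness matrix. Theorem~\ref{thm:sheafdiffusion} then applies verbatim: for any initial displacement $\xi_0 \in C^0\calf^\vee$, the solution converges to the nearest element $\xi_\infty \in H^0 = \ker\Delta$.

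The remaining step is to pin down what $\ker\Delta$ means on $0$-cochains and why it carries the claimed physical content. On $C^0\calf^\vee$ the Laplacian reduces to $\Delta = \delta^\vee\circ\delta$ (the $\delta\circ\delta^\vee$ term lowers dimension below zero and so vanishes), whence $\langle \xi, \Delta\xi\rangle = \langle \delta\xi, \delta\xi\rangle$ forces $\ker\Delta = \ker\delta$. Since $C^{-1}\calf^\vee = 0$ we have $H^0\calf^\vee = \ker\delta^0/\im\delta^{-1} = \ker\bdd^\vee$, so in fact $\ker\Delta = \ker\delta = H^0\calf^\vee$ as an honest equality. The limit $\xi_\infty$ is therefore precisely a $0$-cocycle of the linkage sheaf, i.e.\ one of the constrained stiff displacements described in Section~\ref{sec:linkage}. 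Because $\delta\xi_\infty = 0$ records zero first-order change in every edge length, Hooke's law assigns zero axial deformation and hence zero internal force to each member, which is the final assertion.

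I expect no serious obstacle, as the statement is a corollary by design; the only point demanding care is the bookkeeping that guarantees $\ker\Delta = H^0\calf^\vee$ rather than merely $\ker\Delta \supseteq H^0$. This is the Hodge-theoretic observation above, and it is where I would concentrate the verification, making explicit that the convergence is to a harmonic representative and that harmonicity on $0$-cochains coincides with the cocycle, i.e.\ zero-strain, condition.
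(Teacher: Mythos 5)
Your proposal is correct and follows exactly the paper's route: the corollary is stated there as an immediate consequence of Theorem~\ref{thm:sheafdiffusion} applied to the linkage sheaf with the spring-constant inner product, which is precisely your specialization. Your extra care in verifying $\ker\Delta = \ker\delta = H^0\calf^\vee$ on $0$-cochains is a welcome elaboration of a step the paper leaves implicit, but it does not change the argument.
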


Utilizing mass and dampening matrices as well as the stiffness matrix, the wave equation 
is another legitimate model for truss dynamics.

	
	\subsection{The Position Sheaf} 
	\label{sec:position}

	In Section~\ref{sec:linkage} we saw how $H^0$ of the linkage sheaf 
	$\calf^\vee$ determines constrained stiff displacements of the truss. Under such an 
	action, the truss is manipulated by a mechanism or global motion without extending 
	edges. Here we consider the opposite: the space of deformations on a truss that {\it 
	only} extend and contract edges, keeping them parallel. This space of parallel motions is 
	captured by the cohomology of the following sheaf.
	
	\begin{definition}[Position Sheaf]
	\label{def:positionsheaf}
		Suppose $(X,p)$ is a diagram in Euclidean space $\R^n$ with injective realization. The 
		{\it position sheaf} $\calj$ is built as follows. Consider the sheaf whose vertex stalks 
		are the dual spaces $(\R^n)^\vee$. Assign to each edge $e$ the stalk the quotient of 
		$(\R^n)^\vee$ by the subspace $\calf^\vee_e = {\bf e}^\vee$ from 
		Definition~\ref{def:linkagesheaf}. All higher dimensional cells have zero 
		stalks. The restriction maps $\calj_{v\lhd e}$ are precisely the projections onto these 
		quotient spaces.
	\end{definition}
	
	\begin{figure}[ht]\centering
		\begin{subfigure}[t]{0.49\textwidth}\centering
			\includegraphics{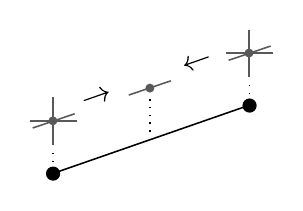}
			\caption{Sketch of the linkage sheaf in $\R^3$}
		\end{subfigure}
		\begin{subfigure}[t]{0.49\textwidth}\centering
			\includegraphics{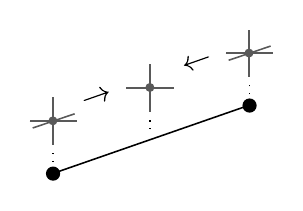}
			\caption{Sketch of the position sheaf in $\R^3$}
		\end{subfigure}
	\end{figure}
	
	The cochain complex and the cohomology of $\calj$ can be interpreted as follows.
	\begin{itemize}
		\item The 0-cochains $C^0 \calj$ are infinitesimal deformations, as per the linkage 
		sheaf.
  		\item The 1-cochains $C^1 \calj$ are systems of {\it orthogonal deviations} of edges 
		with respect to the original realization $p$. Over an edge $u,v\lhd 
		e$ in $(X,p)$, an element $o_e \in \calj_e$ is a co-vector	perpendicular (or quotient) 
		to the edge. If one fixes a reference frame at one vertex $u$ and ``looks down the 
		edge'' in $\R^n$, we let the co-vector $[u:e] o_e$ act on vertex $v$ by displacing 
		it in the plane of view. No displacement of $v$ out of plane (or parallel to $e$) 
		can be detected however. These 1-chains can be considered as 1-forms in the normal 
		subspace to the edge. For small values of $o_e$, the small angle approximation $o_e 
		= \sin(\theta)\sim \theta$ means we consider $o_e$ as a {\it small rotation} of the 
		edge by $\theta$. This is pictured in Figure~\ref{fig:position_square} (right).
		
		\item The coboundary $\delta: C^0 \calj \to C^1 \calj$ takes a deformation
		of $(X,p)$ and measures its induced orthogonal deviation over edges. Specifically, 
		given $\xi\in C^0\calj$, the coboundary $\delta \xi$ evaluated at edge $e$
		describes the net orthogonal motion of $e$ under the realization $p+\xi$.  
		
		\item The 0-cohomology $H^0 \calj$ consists of {\it parallel extensions} of $X$,  
		deformations to vertices such that edges remain parallel to their original. For an 
		orthogonal deviation $\xi\in H^0 \calj$, over an edge $u,v\lhd e$ and at a vector 
		$x_e\in {\bf e}^\perp$ the co-vector $\xi_v-\xi_u$ takes value $(\xi_v-\xi_u)(x_e) = 
		0$: thus, $\xi_v=\xi_u$, and the induced edges in the deformed 
		diagram $(X,p+\xi)$ are parallel to those in $(X, p)$.
 
		\item The cohomology $H^1 \calj$ is the space of equivalence classes of {\it 
		impossible orthogonal deviations} or {\it impossible edge rotations}. These are error 
		assignments to edges which cannot result from any possible choice of vertex 
		coordinates. Figure~\ref{fig:position_boxed} depicts a prototypical cocycle.
	\end{itemize}
  %
  %
 	Parallel deformations in $H^0 \calj$ are co-vectors and can be summed, resulting in a 
 	visual algebra of shape. For instance, if $\xi_0, \xi_1\in H^0 \calj$ are two cocycles, the 
 	deformations can be added to the diagram $(X, p+\xi_0 + \xi_1)$. Moreover the 
 	realization $p$ (differentially $dp$) itself is a parallel deformation, {\it scaling} with 
 	respect to the origin $(X,p)$ to the diagram $(X, p + dp)$. To better understand the 
 	vector space we can observe the {\it realization of infinitesimals} $(X, \xi)$ for $\xi\in 
 	H^0 \calj$. There is an isomorphism between diagrams of the form $(X, \xi)$ and 
 	diagrams of the form $(X, p + \xi$). This is justified by canonical isomorphisms between 
 	tangent spaces $T_{p_v} \R^n$ and the base manifold $\R^n$. 
 	Figure~\ref{fig:position_square} (left) and (center) depicts this duality between 
 	infinitesimal rotations and scalings.
 		
	\begin{figure}[ht]\centering
		\begin{subfigure}[t]{0.3\textwidth}\centering
			\includegraphics{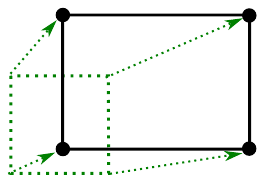}
		\end{subfigure}
		\begin{subfigure}[t]{0.3\textwidth}\centering
			\includegraphics{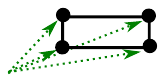}
		\end{subfigure}
		\begin{subfigure}[t]{0.2\textwidth}\centering
			\includegraphics{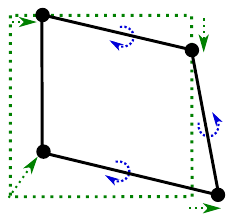}
		\end{subfigure}
		\caption{In an example with $(X,p)$ a square, the position sheaf cochain complex 
			has $C^0 \calj$ of dimension eight and $C^1 \calj$ of dimension four with a surjective 
			coboundary map. Two generators of parallel realizations in $H^0 \calj$ translate 
			the square in space while the two other generators change the aspect ratio of the 
			square, fully describing all rectangles in the plane (left). Where a cocycle $r\in H^0 
			\calj$ deforms the diagram $(X, p)$ to $(X, p+r)$ (left), one can instead depict $r$ 
			as a diagram of infinitesimals (center), as a realization of $X$ in its own right $(X, 
			r)$. In $C^1 \calj$, 
	  		each 1-cochain measures the edge rotation of an alternative diagram relative to original $(X, p)$ (right). 
   			For the square, $H^1 \calj = 0$, meaning every possible orthogonal edge deviation in $C^1 \calj$ is the 
			coboundary of some choice of vertex coordinates in $C^0 \calj$. A cochain is drawn 
			(right), with orthogonal edge deformations drawn more perceptibly as (small) rotations.}
		\label{fig:position_square}
	\end{figure}
 
	\begin{example}[Impossible Edge Rotation] \label{ex:position_boxed} 
		Suppose $(X, p)$ is the form diagram in Figure~\ref{fig:force_boxed}. In constructing alternative 
		parallel realizations, notice the diagonal edges lock the aspect ratio of 
		the outer square. Consequently the vector space $H^0 \calj$ consists of two 
		translations and one scaling dimension. The Euler characteristic 
		formula~\eqref{eq:euler} for sheaves indicates that $H^1 \calj$ is one dimensional, with 
		a generator pictured in Figure~\ref{fig:position_boxed} (left). Just as mechanisms of a 
		truss cannot be resisted by truss members (inducing motion), impossible edge rotations 
		cannot be realized by nodal positions.
	\end{example}
	
	\begin{figure}[ht]\centering
		\begin{subfigure}[t]{0.49\textwidth}\centering
			\includegraphics{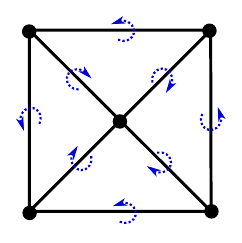}
		\end{subfigure}
		\begin{subfigure}[t]{0.49\textwidth}\centering
			\includegraphics{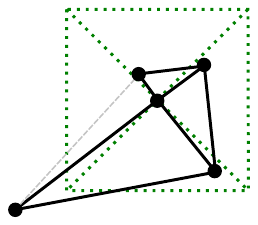}
		\end{subfigure}
		\caption{An impossible infinitesimal edge rotation, illustrating an element of $H^1 \calj$, 
  			is sketched (left). To see this as an impossible class, first fix the center vertex. 
			The four diagonal edges act by rotating the four corner vertices clockwise. No matter 
			how one contracts or extends the diagonal edges, at least one exterior edge must rotate 
			clockwise as well (right). Specifying the outside edges to all rotate counterclockwise 
			is a contradiction -- an impossible requirement.}
		\label{fig:position_boxed}
	\end{figure}
	
	\begin{example}[Self Shear]\label{ex:self shear}
		The position sheaf has its own linear dual $\calj^\vee$. For $\calj$ a position sheaf over 
		the diagram $(X,p)$, the cosheaf $\calj^\vee$ has edge stalks ${\bf e}^\perp \iso \R^{n-1}$ 
  		over edges, with extension maps embeddings $\calj^\vee = E_{e^\perp}$ 
		from these perpendicular subspace into $\R^n$ vertex stalks.
		
		Similar to the force cosheaf, the homology $H_1 \calj^\vee$ has interpretation as a 
		space of {\it self-shear stresses}. 
		Here, the values of a stalk $\calj^\vee _e \iso {\bf e}^\perp$ is the space of shear 
  		forces perpendicular to the member $e$. This 
		equilibrium of these forces is {\it self-shear}, where no edge is loaded with any axial 
		force.
	\end{example}
	
	\begin{example}[Position Sheaf Laplacian]
	\label{ex:pos-laplacian}		
		Akin to the stiffness matrix in Section \ref{sec:stiffness}, 
		the sheaf Laplacian of the position sheaf $\Delta_\calj$ 
		has interesting properties. Although spring constants can be introduced, consider the
  		unweighted sheaf Laplacian $\Delta_\calj = \delta \circ \delta$. For the 
		position sheaf $\calj$ over a reference diagram 
		$(X,p)$, the diagram $(X,p+\xi_t)$ with arbitrary initial realization $\xi_0\in C^0 \calj$ 
		undergoes transformation under the differential equation
		\begin{equation}\label{eq:pos_diffeq}
			\frac{d\xi}{dt} = -\Delta_\calj\xi.
		\end{equation}
		The realization displacements $\xi_t$ converge to $\xi_\infty \in H^0 \calj$,
		a parallel realization to $p$. The limit diagram $(X,p+\xi_\infty)$ is the closest 
		diagram to $(X,p+\xi_0)$ parallel to $(X,p)$ in the sense that $\xi_\infty$ minimizes 
		the expression \cite{hansen2020laplacians}
		\begin{equation}
			\|\xi_\infty - \xi_0 \| = \sqrt{\sum_{v\in V} \left\|\xi_\infty(v) - \xi_0(v)\right\|^2}.
		\end{equation}
		
		\begin{figure}[ht]\centering
			\includegraphics{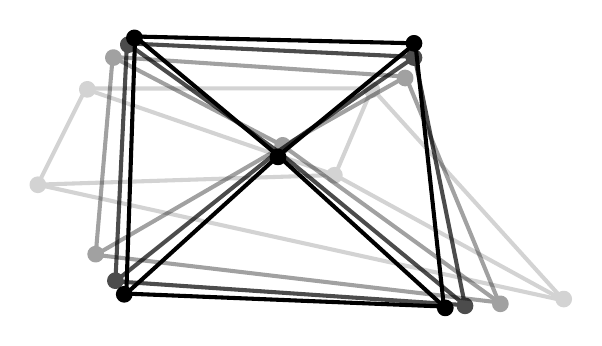}
			\label{fig:pos_laplacian}
			\caption{The action of the position sheaf Laplacian (derived from 
				Figure~\ref{fig:force_boxed}) is pictured on a diagram with arbitrary 
				realization $(X,p+\xi_0)$. Under the heat equation~\eqref{eq:pos_diffeq}, the 
				realization exponentially converges to that of the limit diagram $(X,p+\xi_\infty)$ 
				parallel to that of Figure~\ref{fig:force_boxed}.}
		\end{figure}
	\end{example}

	\begin{example}[Maxwell Dualized]
	\label{ex:maxwell-dualized}		
	Recall from Section \ref{sec:rule} that the classical Maxwell Rule follows from the 
	invariance of the Euler characteristic under (co)homology of the force cosheaf or the 
	linkage sheaf. The same can be directly applied to the position sheaf and its dual 
	cosheaf, with corresponding interpretations. 
  	We leave this variation of the Maxwell Rule to the curious reader as an illustrative exercise.
	\end{example}
 
	\subsection{Maps Between Cosheaves}
	\label{sec:maps}
	Any given cosheaf is best understood in relation to other cosheaves. 
	These connections are algebraically formed by distributed linear maps between stalks, 
	subject to constraints ensuring the consistency of the transmitted data.
	
	\begin{definition}[Cosheaf Map]
		For two cosheaves $\calk$ and $\call$ over the same cell complex $X$, a {\it cosheaf map} 
		$\phi: \calk \to \call$ consists of a collection of linear maps $\{\phi_c: \calk_c \to 
		\call_c\}$ on stalks satisfying, for each pair of adjacent cells $c\lhd d$, the following
		commutativity condition:
		\begin{equation}\label{eq:comm_square_cosheaf}
			\begin{tikzcd}
				\calk_d \ar[r, "\phi_d"] \ar[d, "\calk_{d\rhd c}"] & \call_d \ar[d, "\call_{d\rhd 
					c}"]\\
				\calk_c \ar[r, "\phi_c"] & \call_c
			\end{tikzcd}
		\end{equation}
		Commutativity here means the compositions of maps $\calk_d\to\call_c$ are 
		path-independent.
	\end{definition}
	
	Each cosheaf map $\phi: \calk \to \call$ induces a map between chain complexes $\phi:
	C \calk \to C \call$ by combining all stalk-wise linear component maps. A cosheaf
	morphism $\phi$ is injective, surjective, or an isomorphism if each of its
	component maps $\phi_c$ are.
	
	Intuition from linear algebra holds: an injective cosheaf map $\phi: \calk \to \call$ 
 	can be thought of as a ``sub-cosheaf'' -- an inclusion of one data structure into another. 
  	To model the data in $\call$ that is ``orthogonal'' to $\calk$, 
	one builds the {\it quotient cosheaf} $\calk / \call$ with stalks being quotient vector 
	spaces $\left( \call / \calk \right)_c = \call_c / \calk_c$ 
	with $c$ ranging over all cells. Extension maps over incident cells $c\lhd d$ are derived 
	from the larger cosheaf $\call$ taking the form
	\begin{equation}
		\call/\calk_{d\rhd c} (x + \im\phi_d) = \call_{d\rhd c} (x) + \call_{d \rhd c}(\im \phi_d) 
		= \call_{d\rhd c} (x) + \im\phi_c
	\end{equation}
	where the second equality comes from the commutativity of~\eqref{eq:comm_square_cosheaf}. 
	One can think of $\call / \calk$ as encoding the quotient to $\calk$ inside of $\call$, 
	with a cosheaf projection map $\pi: \call \to \call / \calk$. 
 
 	The relationship between the cosheaves $\calk, \call$ and $\call / \calk$ is especially significant.
	
	\begin{definition}[Exact Sequences]\label{def:exact}
		A sequence of vector spaces
		\begin{equation}
  		\label{eq:exact_vectors}
			\dots \to V_3 \to V_2 \to V_1 \to V_0 \to V_{-1} \to \dots
		\end{equation}
		is said to be {\it exact} if, at each term, the image of the incoming linear transformation is 
		precisely the kernel of the outgoing linear transformation. Equivalently, the 
		sequence~\eqref{eq:exact_vectors} is a chain complex whose homology completely 
		vanishes.
	\end{definition}

	In the special case that \eqref{eq:exact_vectors} is non-zero in at most three incident terms, 
 	one rewrites this as a {\it short exact sequence}
	\begin{equation}
		0 \to V_2 \to V_1 \to V_0 \to 0 .
	\end{equation}
	It follows \cite{AlgebraicHatcher2002} that there are isomorphisms $V_1 \iso V_0 \oplus V_2$ and $V_0 \iso V_1 / 
	V_2$. In the case of cosheaves $\calk, \call$ and $\call / \calk$ as 
	above, one has the short exact sequence of cosheaves
	\begin{equation}
	\label{eq:ses}
		0 \to \calk \xrightarrow{\phi} \call \xrightarrow{\pi} \call/\calk \to 0 .
	\end{equation}
	Over each cell $c$, $\phi_c$ is injective, $\pi_c$ is surjective, and $\im\phi_c 
	\iso \ker\pi_c$. One has an isomorphism of cosheaves $\call \iso \calk \oplus \call / 
	\calk$, in
	which the data of $\call$ is subdivided into its constituent components parallel and 
	orthogonal to $\calk$.
	
	Consolidating cosheaf maps, a short exact sequence of cosheaves induces a short exact 
	sequence of cosheaf chain complexes
	\begin{equation}\label{eq:ses_chains}
		0 \to C \calk \xrightarrow{\phi} C \call \xrightarrow{\pi} 
		C \call/\calk \to 0.
	\end{equation}
	
	These chain complexes yield the canonical {\it long exact sequence} in cosheaf 
	homology
	\begin{equation}\label{eq:les}
		\cdots \rightarrow H_{i+1} \call/\calk \xrightarrow{\vartheta} H_i 
		\calk \xrightarrow{\phi} H_i \call \xrightarrow{\pi} H_i \call/\calk 
		\xrightarrow{\vartheta} H_{i-1} \calk \rightarrow \cdots
	\end{equation}
	where $\vartheta$ here are called {\it connecting homomorphisms}. This sequence is 
	exact as a sequence of vector spaces.
	
	The various maps in the long exact sequence of homology~\eqref{eq:les} can be 
	constructed rigorously. The induced linear maps $\phi$ and $\pi$ are the same as 
	they are in the chain complex setting~\eqref{eq:ses_chains}, only evaluated on 
	homology classes.
	
	For the reader who may not be familiar with methods from homological algebra, we 
	briefly outline the construction of the connecting homomorphisms $\vartheta$ in 
	sequence~\eqref{eq:les} here \cite{AlgebraicHatcher2002}. A homology class in $H_i 
	\call / \calk$ is represented by a cycle in $C_i \call / \calk$. It can be shown that 
	$\vartheta$ 
	maps this representative to a chain in $C_{i-1}\calk$ by the following procedure, 
	outlined 
	in diagram~\eqref{eq:connecting_hom}. The preimage of a cycle in $C_i \call / \calk$ by 
	$\pi$ is a chain in $C_i \call$, to which the boundary map of $\call$ is applied. The 
	preimage of this chain in $C_{i-1} \call$ is an element of $C_{i-1} \calk$, which can be 
	shown to be a cycle, hence a homology class in $H_{i-1} \calk$.
	
	\begin{equation}\label{eq:connecting_hom}
		\begin{tikzcd}
			0 \ar[r] & C_i \calk \ar[r, "\phi"] \ar[d] & C_i \call \ar[r, "\pi"] \ar[d] \arrow[d, 
			dotted, 
			bend left]& C_i \call / \calk 
			\ar[r] \ar[d] \arrow[l, dotted, bend left] & 0\\
			0 \ar[r] & C_{i-1} \calk \ar[r, "\phi"] & C_{i-1} \call \ar[r, "\pi"] \arrow[l, dotted, 
			bend 
			left] & C_{i-1} \call / \calk 
			\ar[r] & 0
		\end{tikzcd}
	\end{equation}
	
	Furthermore, incorporating $\vartheta$ it can be shown the sequence~\eqref{eq:les} 
	truly is exact.
	
	\begin{example}[Geometry of Axial Forces]
 	\label{ex:force_embed}
		The edge-stalks of the force cosheaf $\calf$ can, by abuse of notation, be thought of 
		as	edge-parallel subspaces of the vertex stalks $\R^n$. Consequently, $\calf$ can be 
		embedded as a sub-cosheaf of the larger constant cosheaf $\overline{\R^n}$. This 
		latter constant cosheaf describes the ambient Euclidean space of forces.
		
		Suppose $\calf$ is the force cosheaf over a two dimensional cell complex $X$ realized 
		in $\R^n$. There is an injective map $\phi: \calf \to \overline{\R^n}$ between 
		cosheaves where $\phi_v$ is the identity over vertices and $\phi_e$ is the 
		embedding map $E_e: \R \iso {\bf e} \to \R^n$ of the edge stalk into $\R^n$. This 
		cosheaf map $\phi_f=0$ is trivial over faces. 
		
		From the injective cosheaf map $\phi$, the quotient $\overline{\R^n} / \calf$ is the 
		cosheaf of complementary data in $\R^n$. The stalks of this cosheaf are comprised of 
		full $\R^n$ data over faces and orthogonal data ${\bf e}^\perp$ along edges, with 
		trivial data over vertices. Summarizing the relationships between these is a commutative 
		diagram of stalks over every triplet of incident cells $v\lhd e \lhd f$.
  	\begin{equation}
   	\label{eq:2D force diag}
			\begin{tikzcd}
				& 0 \ar[r]  \ar[d] & \overline{\R^n}_f \ar[r, "\id"]\ar[d, "\id"] & 
				(\overline{\R^n}/\calf)_f \ar[r] \ar[d, "P_{e^\perp}"] & 0 \\
				0 \ar[r] & \calf_e \ar[r, "E_e"] \ar[d, "E_e"] & \overline{\R^n}_e \ar[r, 
				"P_{e^\perp}"] \ar[d, "\id"] & (\overline{\R^n}/\calf)_e \ar[r] \ar[d] & 0\\
				0 \ar[r] & \calf_v \ar[r, "\id"] & \overline{\R^n}_v \ar[r] & 0 &
			\end{tikzcd}
	\end{equation}
		Here, $P_{e^\perp}$ is the projection map to the equivalence class $x \to x + {\bf e} 
		\in \R^n/{\bf e}$ in the quotient. This can be regarded as projecting to the orthogonal 
		subspace ${\bf e^\perp}$, utilizing an inner product on stalks.
		
		Choosing $X$ to be a polyhedral sphere $S^2$, the Poincar\'e dual of the cosheaf 
		$\R^n / \calf$ is a position sheaf over the dual polyhedral sphere $\tilde{X}$.
  		Interpreting face stalks of $\overline{\R^n} / \calf$ to be the coordinates of 
  		Poincar\'e dual vertices in Euclidean space, this forms a position sheaf over 
  		$\tilde{X}$.
	\end{example}
	
	\section{Planar Graphic Statics}
	\label{sec:gs2}
	At heart graphic statics is {\it visual}; 
	hidden internal axial forces within truss members are put on full display in reciprocal
	diagrams. This phenomenon is most clearly seen with planar trusses, where a spherical cell 
	complex is projected to the plane.
 
	
	\begin{figure}[!ht]\centering
		\begin{subfigure}[t]{0.32\textwidth}\centering
			\includegraphics{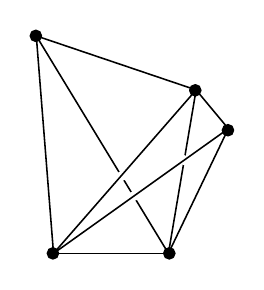}
		\end{subfigure}
		\begin{subfigure}[t]{0.32\textwidth}\centering
			\includegraphics{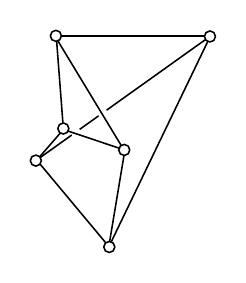}
		\end{subfigure}
		\begin{subfigure}[t]{0.32\textwidth}\centering
			\includegraphics{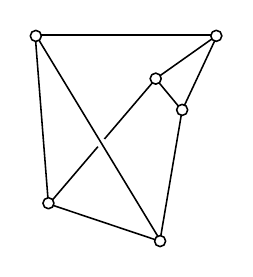}
		\end{subfigure}
		\caption{Force diagrams dual to a form diagram are pictured. The form diagram (left) 
			has two degrees of self stress generated by holding the covered edges in tension or 
			compression. One type of dual diagram (center) emerges from specifying these forces to 
			have the same sign. Another type of dual diagram (right) comes from having one edge 
			in tension and one in compression. As force configurations can be linearly added over 
			a truss, force diagrams can likewise be added, giving a ``structural algebra'' of 
			graphs.}
		\label{fig:planar}
	\end{figure}
	
	\subsection{2D Planar Duality}
	\label{sec:2D1}
	
	Here we formulate graphic statics algebraically. From Example~\ref{ex:force_embed}, 
	over a form diagram $(X,p)$ in $\R^2$ there is a short exact sequence of cosheaves
	\begin{equation}\label{eq:2D force SES}
		0 \to \calf \xrightarrow{\phi} \overline{\R^2} \xrightarrow{\pi} \overline{\R^2}/\calf 
		\to 
		0
	\end{equation}
	where $\calf$ is the force cosheaf and $\overline{\R^2}$ the constant cosheaf with 
	$\R^2$ stalks. The homological algebra of these cosheaves {\it is} graphic statics in its 
	abstract form. With the cell complex $X$ a topological sphere $S^2$, the Poincar\'e 
	dual sheaf to $\overline{\R^2} / \calf$ takes the form of a position sheaf over 
	$\tilde{X}$ whose structure is determined by the form diagram $(X,p)$. For
  brevity,
  we use a more concise notation $\calg = \overline{\R^2}/\calf$ .
	
	With the interpretations of Section~\ref{sec:position}, an element of the stalk 
	$\tilde{\calg}_{\tilde{f}}$ gives coordinates for the vertex $\tilde{f}$ dual to
  the face $f$. A cochain of $C^0 \tilde{\calg}$ is then the data of a
  realization $q$ of $\tilde{X}$ (by abuse of 
	notation, interchanging deformations and realizations of diagrams, discussed in 
	Figure~\ref{fig:position_square}). To be a cocycle, $q$ must satisfy a cocycle condition 
	property over dual edges. For coordinates $q_{\tilde{f}}$ and $q_{\tilde{g}}$
  of dual vertices incident to an edge $\tilde{f}, \tilde{g} \lhd \tilde{e}$, 
	the difference between these coordinates must be in the subspace ${\bf e} \subset 
	\R^2$. Under these constraints, a realization $q$ would set a dual edge $\tilde{e}$ {\it 
	parallel} to its primal counterpart $e$ in the diagram $(X,p)$. Cohomology classes of 
	$H^0 \tilde{\calg}$ are dual realizations specifying {\it parallel dual diagrams} 
	$(\tilde{X},q)$.
	
	The short exact sequence of cosheaves~\eqref{eq:2D force SES} above has a 
	corresponding long exact homology sequence~\eqref{eq:les}. Due to the trivial stalks 
	of $\calf$ and $\calg$, this 
	long exact sequence is simplified by two zero homology spaces $H_2 \calf$ and $H_0 
	\calg$. Further simplifications are made by considering the homology of 
	the constant cosheaf. These homology spaces are determined by the base topology of 
	$S^2$, namely $\dim H\overline{\R^2} = 2 \dim H S^2$. In particular $H_1 
	\overline{\R^2} = 0$ is zero, splitting the long exact sequence into two.
	\begin{equation}\label{eq:2D1}
		0 \to \R^2 \xrightarrow{\pi} H_2 \calg \xrightarrow{\vartheta} H_1\calf \to 0
	\end{equation}
	\begin{equation}\label{eq:2D2}
		0 \to H_1 \calg \xrightarrow{\vartheta} H_0 \calf \xrightarrow{\phi} \R^2 \to 0
	\end{equation}
	where $\vartheta$ are connecting homomorphisms. These two homology sequences 
	are at the heart of two-dimensional graphic statics. The first encodes the algebraic
	relationship between axial self stresses and dual realizations. The second links 
	mechanical degrees of freedom to dual impossible rotations.
	
	\begin{theorem}[Planar Force and Form \cite{ReciprocalMaxwell1864, 
			Crapo1993}]\label{thm:plane2D}
		Let $(X,p)$ be a spherical form diagram in $\R^2$. There is a bijection between self 
		stresses $a \in	H_1 \calf$ and parallel force diagrams $(\tilde{X},q)$ with realization 
		$q\in H^0 \tilde{\calg}$ up to global translation satisfying the condition
    that over each edge of $X$ bounded by 
		two faces $e\lhd f,g$, the force $w_e$ is equal to the oriented length $[e:
		f](q_{\tilde{f}}-q_{\tilde{g}})$ of the dual edge $\tilde{e}$.
	\end{theorem}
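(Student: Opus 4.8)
The plan is to read the correspondence directly off the first split exact sequence~\eqref{eq:2D1} once it is rewritten through Poincar\'e duality, and then to extract the geometric content by computing the connecting homomorphism $\vartheta$ explicitly. First I would apply Poincar\'e duality (Definition~\ref{def:poincare}) over the oriented manifold $S^2$ to identify $H_2 \calg \iso H^0 \tilde{\calg}$. By the position-sheaf interpretation of Section~\ref{sec:position} and the discussion preceding the theorem, a class in $H^0 \tilde{\calg}$ is exactly a parallel dual realization $(\tilde{X}, q)$: an assignment of coordinates $q_{\tilde{f}}$ to each dual vertex such that across every dual edge the difference $q_{\tilde{f}} - q_{\tilde{g}}$ lies in the edge-parallel subspace ${\bf e}$. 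Substituting this identification turns~\eqref{eq:2D1} into $0 \to \R^2 \xrightarrow{\pi} H^0 \tilde{\calg} \xrightarrow{\vartheta} H_1\calf \to 0$, so exactness alone already delivers the abstract bijection $H^0 \tilde{\calg} / \im\pi \iso H_1\calf$.

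Second, I would pin down $\im\pi$ as the global translations. The map here is induced by $\pi \colon \overline{\R^2} \to \calg$ on $H_2$; since $H_2\overline{\R^2}$ is the $S^2$ fundamental class tensored with the constant $\R^2$-stalk, a generator assigns one common vector to every face stalk. Under the identification of face stalks with dual-vertex coordinates from Example~\ref{ex:force_embed}, a constant vector on all faces is precisely a rigid translation of the realization $q$. Hence quotienting by $\im\pi$ is exactly passing to dual diagrams up to global translation, as the statement requires, and $\vartheta$ descends to the claimed bijection.

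The crux, and the step I expect to require the most care, is checking that $\vartheta$ encodes the force/oriented-length relation rather than being merely an abstract isomorphism. Here I would run the snake-lemma recipe of~\eqref{eq:connecting_hom} on a cocycle $q$. Because $\pi$ is the identity on face stalks, $q$ lifts to the chain in $C_2\overline{\R^2}$ with face value $q_{\tilde{f}}$; applying the constant-cosheaf boundary, whose extension maps are all identities, gives the edge value $(\bdd q)_e = [e:f] q_{\tilde{f}} + [e:g] q_{\tilde{g}}$. The cocycle condition kills its ${\bf e}^\perp$-component, so this vector lies in ${\bf e} = \im E_e$ and has a unique $\phi$-preimage $w_e \in \calf_e$, which by construction is the self stress $\vartheta(q)$. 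Using $[e:f][e:g] = -1$ --- the Directed-Edges rule for the dual edge $\tilde{e}$, transported by $[\tilde{d} : \tilde{c}] = [c:d]$ --- this collapses to $E_e(w_e) = [e:f](q_{\tilde{f}} - q_{\tilde{g}})$, which is exactly the asserted equality of the force in $e$ with the oriented length of $\tilde{e}$. The real work is thus purely bookkeeping of incidence signs and of the Poincar\'e-dual matching of primal faces and edges with dual vertices and edges; once these conventions are fixed, the geometry falls out of the algebra and the exactness of~\eqref{eq:2D1} upgrades the construction to a bijection.
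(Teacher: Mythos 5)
Your proposal is correct and follows essentially the same route as the paper: both arguments read the bijection off the exactness of sequence~\eqref{eq:2D1} (identifying $\ker\vartheta = \im\pi$ with global translations) and then extract the force/oriented-length relation by running the connecting-homomorphism recipe of~\eqref{eq:connecting_hom}, lifting $q$ to $C_2\overline{\R^2}$ and applying the constant-cosheaf boundary to land in $\calf_e \iso {\bf e}$. Your version is slightly more explicit about the incidence-sign bookkeeping and about why the cocycle condition guarantees a $\phi$-preimage over each edge, but the underlying argument is the same.
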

	\begin{proof}
		This theorem is a consequence of the exactness of sequence~\eqref{eq:2D1}. We 
		walk through each step of the process to fully develop the correspondence.
		
		By exactness at $H_2 \overline{\R^2}\iso \R^2$ the map $\pi$ is injective, and by 
		exactness at $H_1 \calf$ the map $\vartheta$ is surjective. In particular,
    for any self stress 
		$a\in H_1 \calf$ over $X$ there must be a realization $q\in H^0 \tilde{\calg}$ as a 
		preimage of $\vartheta$. For any two preimages $q, r$ of $w$ the realization	$q-r$ 
		is in the kernel of the connecting homomorphism $\vartheta$. Exactness at $H_2 
		\calg$ states that the kernel of $\vartheta$ is equal to the image of $\pi$, so $q-r$ 
		itself has a preimage by 
		$\pi$ in $H_2 \overline{\R^2} \iso \R^2$ a global translation 
		vector. Subsequently, the difference of realizations $q-r$ is a global translation of all 
		dual vertex coordinates of $\tilde{X}$. This describes the bijection up	to global 
		translation.
		
		The value of the bijection follows from the construction of the connecting 
		homomorphism $\vartheta$, outlined by line~\eqref{eq:connecting_hom}. The 
		preimage 
		of $q$ by $\pi$ is a cycle in $C_2 \overline{\R^2}$, where it is mapped by the
		boundary map of the cosheaf $\overline{\R^2}$ into $C_1 \overline{\R^2}$. There it 
		takes value $[e:f](q_f - q_g)$ over edge $f,g\rhd e$, which, over the dual diagram is 
		the difference in coordinates $[\tilde{f}: \tilde{e}] (q_{\tilde{f}} - q_{\tilde{g}})$ over 
		the dual edge $\tilde{e}$. This is the value of the self-stress $w_e$ over the primal 
		edge $e$.
		
		In this way, the dual edge $\tilde{e}$ is not only parallel to $e$ but its length is 
		determined by the cycle $w\in H_1 \calf$. The direction and magnitude of the force 
		vector $w_e$ is equal to the edge $\tilde{e}$ as realized by $q\in H^0 \tilde{\calg}$.
	\end{proof}
	Theorem~\ref{thm:plane2D} describes the nature of the isomorphism $H^0 
	\tilde{\calg} / \R^2 \iso H_1 \calf$ following the exact sequence of vector 
	spaces~\eqref{eq:2D1}. The second exact sequence~\eqref{eq:2D2} gives a similar 
	theorem linking two other homological spaces. Recall that an impossible edge rotation 
	is an edge assignment that cannot possibly result from any realization even allowing for 
	free axial extension of dual edges. Here, impossible dual edge rotations of the 
	reciprocal diagram $(\tilde{X},q)$ are linked to mechanical freedom of the primal.
	
	\begin{theorem}[Mechanics and Impossible Rotations in $\R^2$
		]\label{thm:second_theorem}
		Let $(X, p)$ be a spherical diagram realized in $\R^2$ and let 
		$(\tilde{X}, q)$ be a parallel reciprocal diagram. There is a bijection 
		correspondence between impossible edge rotations of $(\tilde{X},q)$ and the 
		mechanisms and global rotations of $(X,p)$.
	\end{theorem}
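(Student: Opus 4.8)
The plan is to rerun the proof of Theorem~\ref{thm:plane2D}, now extracting the correspondence from the \emph{second} split exact sequence~\eqref{eq:2D2},
\[ 0 \to H_1\calg \xrightarrow{\vartheta} H_0\calf \xrightarrow{\phi} \R^2 \to 0, \]
rather than the first. The two outer terms are exactly the spaces named in the statement, so the theorem reduces to identifying each geometrically and then invoking exactness of~\eqref{eq:2D2}.

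First I would pin down the left term. Since $\calg = \overline{\R^2}/\calf$ is a cosheaf over the sphere $X$, Poincar\'e duality in dimension $n=2$ gives $H_1\calg \iso H^1\tilde{\calg}$. As recorded in Section~\ref{sec:2D1} (via Example~\ref{ex:force_embed}), the Poincar\'e dual $\tilde{\calg}$ is the position sheaf of the reciprocal diagram $(\tilde{X},q)$, whose first cohomology is by definition (Section~\ref{sec:position}) the space of impossible edge rotations. Hence $H_1\calg$ is identified with the space of impossible edge rotations of $(\tilde{X},q)$.

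Next I would analyze $\phi$ and identify its kernel with the mechanisms and global rotations of $(X,p)$. The map is induced by the inclusion $\calf\hookrightarrow\overline{\R^2}$, which is the identity on vertex stalks; on $H_0$ it therefore carries a constrained force class $[f]$ to its net force $\sum_v f_v$, a well-defined element of $H_0\overline{\R^2}\iso\R^2$ because each edge-stress boundary contributes equal and opposite vertex forces. Pairing $H_0\calf$ against $H^0\calf^\vee$, a global translation $\xi_v\equiv t$ pairs with $[f]$ as $t\!\left(\sum_v f_v\right) = t(\phi[f])$, so $[f]\in\ker\phi$ precisely when $[f]$ annihilates every global translation. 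Thus $\phi$ surjects onto the two-dimensional translation space and $\ker\phi$ is the complementary data, namely the single global rotation together with the $|M|$ mechanisms; Maxwell's rule (Theorem~\ref{thm:maxwell}) confirms the count $\dim\ker\phi = \dim H_0\calf - 2 = 1 + |M|$.

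Exactness of~\eqref{eq:2D2} then makes $\vartheta$ injective with image $\ker\phi$, yielding $H_1\calg \iso \ker\phi$ and hence the asserted bijection. If an explicit pairing is wanted, one can trace the connecting homomorphism as in~\eqref{eq:connecting_hom}, exactly as in the proof of Theorem~\ref{thm:plane2D}. I expect the main obstacle to be the middle step: making the identification of $\im\phi$ with global translations (equivalently, of $\ker\phi$ with rotations-plus-mechanisms) fully rigorous, since it hinges on tracking the duality between the force picture $H_0\calf$ and the displacement picture $H^0\calf^\vee$ and verifying that ``net force'' is dual to ``global translation.'' Everything else is a direct transcription of the already-established exactness and Poincar\'e duality.
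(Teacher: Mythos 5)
Your proposal follows essentially the same route as the paper: both arguments read the bijection directly off the exactness of sequence~\eqref{eq:2D2}, identifying $H_1\calg \iso H^1\tilde{\calg}$ with impossible edge rotations via Poincar\'e duality and $\ker\phi \subset H_0\calf$ with the non-translational degrees of freedom (mechanisms and global rotations). Your added justification of $\im\phi$ as the translation space via the pairing with $H^0\calf^\vee$, and the dimension check via Maxwell's rule, are correct elaborations of a step the paper states more briefly.
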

	\begin{proof}
		Every impossible edge rotation of $\tilde{X}$ is an element of $H^1 \tilde{\calg} \iso 
		H_1 \calg$. The map $\vartheta$ in exact sequence~\eqref{eq:2D2} is injective, 
		meaning 
		taking the influence of dual edge rotations and summing them at vertices activates a 
		degree of freedom of $X$ in 
		$H_0 \calf$. The map $\phi: H_0 \calf \to \R^2$ is surjective, meaning global 
		translations are always degrees of freedom of $X$.
		
		By exactness at the middle vector space $H_0 \calf$, the image of $\vartheta$ in 
		$H_0 \calf$ is isomorphic to the kernel of $\phi$, which consists of degrees of 
		freedom of $X$ which have no net global translation effect. Because any linear map is 
		an isomorphism onto its image, $H^1 \tilde{\calg}$ is isomorphic to this space of 
		non-translational degrees of freedom, equivalent to the space of mechanisms and 
		global rotations of $X$.
	\end{proof}
	
	A mechanism or global rotation $x\in H_0 \calf$ applied to the structure $X$ rotates at 
	least some of the edges of $(X,p)$. These edge rotations of may be 
	transferred to the reciprocal $(\tilde{X},q)$ after accounting for the {\it sign} of 
	dual edges. By Theorem~\ref{thm:plane2D}, each dual edge $\tilde{e}$ of 
	$(\tilde{X},q)$ corresponds to a primal edge $e$ of $(X,p)$ in tension or in compression. 
	It is a technical point, but if an edge $e$ is in tension (under the convention that tension 
	is a negative value over edges), its dual $\tilde{e}$ is realized in reverse, inverted in the 
	force diagram. Consequently, cycles or cocycles drawn over mirrored dual edges 
	$\tilde{e}$ are also drawn mirrored. A clockwise rotation over an edge in tension 
	transfers to a 	counterclockwise rotation over its counterpart.
	
	\begin{example}[Reciprocal Rotations]\label{ex:rotation transfer}
		Suppose that $(X,p)$ is the form diagram in Figure~\ref{fig:force_boxed}. 
		This truss has two translational degrees of freedom and one rotational degree of 
		freedom. Under a counterclockwise rotation around the central node pictured in 
		Figure~\ref{fig:rotation_scale} (left), all edges rotate counterclockwise as well. This 
		form diagram has one degree of self-stress holding the internal four 
		edges in tension. By Theorem~\ref{thm:plane2D}, this self stress determines a 
		parallel realization of the dual cell complex $\tilde{X}$, pictured (center) in 
		Figure~\ref{fig:rotation_scale}.
		
		Because the internal four diagonal edges (left) are loaded in tension, their dual edges 
		(center) have their sign reversed. The rotations of these four diagonal edges on the 
		dual diagram (center) are reversed, rotating clockwise instead. This is an impossible 
		edge rotation of the force diagram -- an element of $H^1 \tilde{\calg}$ over 
		$\tilde{X}$. This matches with previously found impossible rotation configuration of 
		Figure~\ref{fig:position_boxed}.
	\end{example}
	
	\subsection{Plane Reciprocity}
	\label{sec: reciprocity}
	The concept of ``reciprocity'' here refers to interchanging the roles of the
	form diagram and force diagram, reversing the relationship between the two.
	The self stresses of any form diagram determine dual parallel realizations according to 
	Theorem~\ref{thm:plane2D}. A dual diagram $(\tilde{X},q)$ itself is a diagram in 
	$\R^2$, with all the characteristics that the prior $X$ had. Namely $(\tilde{X},q)$ may 
	have self stresses and mechanisms, governed by the force cosheaf over it.
	
	We may proceed with constructing the force cosheaf over $\tilde{X}$, but we would 
	like to do so with respect to the cosheaves $\calf, \overline{\R^2}, \calg$ previously 
	developed in Section~\ref{sec:2D1}. In this way, the homological properties of 
	the force diagram can be interconnected to those of the form diagram. Here, we 
	introduce the technique of rotating dual realizations. For $q$ an parallel dual realization 
	in $H^0 \tilde{G}$, let $q'$ denote the same realization of $\tilde{X}$ {\it rotated one 
	quarter turn} in the plane. By employing $q'$, dual edges would be realized 
	perpendicular 
	to their counterparts in the form diagram $(X,p)$. The benefit is that the cosheaf 
	$\tilde{\calg}^\vee$ {\it is} the force cosheaf over $(\tilde{X},q')$. Respecting the duality 
	between reciprocal force and form diagrams, we call $\tilde{\calg}^\vee$ the {\it 
	reciprocal cosheaf} to $\calg$.
	
	\begin{lemma}\label{lem:force_position_iso}
		Let $(X,p)$ be a diagram in $\R^2$. The force cosheaf $\calf$ and the 
		linear dual of the position sheaf $\calj^\vee$ are isomorphic.
	\end{lemma}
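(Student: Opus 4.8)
The plan is to write both cosheaves down explicitly at the stalk level and then exhibit the isomorphism as a global quarter-turn rotation of the plane, exploiting the fact that in $\R^2$---and only in $\R^2$---the edge-parallel line ${\bf e}$ and its orthogonal complement ${\bf e}^\perp$ have the same dimension. Adopting the viewpoint of the Remark following Definition~\ref{def:linkagesheaf} and of Example~\ref{ex:self shear}, I regard each one-dimensional edge stalk as an honest subspace of $\R^2$ with its extension map the inclusion, so that any edge-orientation sign is merely a choice of basis for that line and plays no role in the argument.

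First I would unwind $\calj^\vee$. The position sheaf $\calj$ has vertex stalks $(\R^2)^\vee$ and edge stalks $(\R^2)^\vee/{\bf e}^\vee$ with restriction maps the quotient projections. Dualizing as in the Linear Dual Sheaf construction, the cosheaf $\calj^\vee$ has vertex stalks $\R^2$ and edge stalks $((\R^2)^\vee/{\bf e}^\vee)^\vee$. The dual of a quotient is the annihilator of ${\bf e}^\vee$ inside $\R^2$, which under the inner-product identification realizing ${\bf e}^\vee$ as a subspace of $(\R^2)^\vee$ is exactly ${\bf e}^\perp$; this recovers Example~\ref{ex:self shear}. The extension map $\calj^\vee_{e\rhd v}$ is the adjoint of the quotient projection, namely the inclusion $E_{e^\perp}:{\bf e}^\perp\hookrightarrow\R^2$. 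By contrast $\calf$ has vertex stalks $\R^2$, edge stalks ${\bf e}$, and extension maps the inclusions $E_e:{\bf e}\hookrightarrow\R^2$. Thus the two cosheaves carry identical vertex data $\R^2$, a one-dimensional line inside $\R^2$ over each edge, and inclusions; they differ only in that the line is ${\bf e}$ for $\calf$ and ${\bf e}^\perp$ for $\calj^\vee$.

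Next I would define the map. Let $R:\R^2\to\R^2$ be rotation by a quarter turn, so that $R({\bf e})={\bf e}^\perp$ for every edge. Set $\Phi_v=R$ on each vertex stalk and $\Phi_e=R|_{\bf e}:{\bf e}\to{\bf e}^\perp$ on each edge stalk. To see that $\Phi$ is a cosheaf map I check the commutativity square for each incident pair $v\lhd e$: the composite $\Phi_v\circ E_e$ sends $x\in{\bf e}$ to $Rx$, and so does $E_{e^\perp}\circ\Phi_e$, since both extension maps are inclusions into $\R^2$ and $\Phi_e$ is the restriction of $R$. The square therefore commutes, and because $R$ is a linear isomorphism, each $\Phi_v$ and $\Phi_e$ is invertible, so $\Phi$ is an isomorphism of cosheaves.

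The only step demanding care is the stalk-level duality bookkeeping in the second paragraph---correctly identifying $((\R^2)^\vee/{\bf e}^\vee)^\vee$ with ${\bf e}^\perp$ and its extension map with a genuine inclusion, which requires tracking how the inner product realizes ${\bf e}^\vee$ as a subspace of $(\R^2)^\vee$. Once that is pinned down the result is immediate. I would emphasize that the argument is special to the plane: for $n>2$ the edge stalk of $\calf$ stays one-dimensional while that of $\calj^\vee$ has dimension $n-1$, so no stalkwise isomorphism can exist and the correspondence is instead the reciprocity phenomenon treated separately rather than an equality of cosheaves.
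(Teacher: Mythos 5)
Your proposal is correct and follows essentially the same route as the paper: both construct the isomorphism as a global quarter-turn rotation $R$ on vertex stalks, with the induced map on one-dimensional edge stalks making the square $E_{e^\perp}\circ\Phi_e = \Phi_v\circ E_e$ commute. The only cosmetic difference is that the paper declares the edge-level map to be ``the identity'' under a choice of basis identification of $\calf_e$ with $\calj^\vee_e$, whereas you take it to be $R|_{\bf e}:{\bf e}\to{\bf e}^\perp$; your version of the duality bookkeeping for $((\R^2)^\vee/{\bf e}^\vee)^\vee\iso{\bf e}^\perp$ is, if anything, slightly more explicit than the paper's.
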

	\begin{proof}
		We construct a cosheaf isomorphism $\mu: \calf \to \calj^\vee$. Over edges $\mu$ 
		is set to the identity transformation (taking vectors to linear functionals) and is a 
		quarter turn over vertex stalks. Over incident edges and vertices $v\lhd e$, the 
		following diagram commutes
		\begin{equation}
			\begin{tikzcd}
				\calf_e \ar[r, "\id"] \ar[d, "E_e"] & \calj^\vee_e \ar[d, "E_{e^\perp}"] \\ 
				\calf_v \ar[r, "\mu_v"] & \calj^\vee_v
			\end{tikzcd}
		\end{equation}
		where $\mu_v$ maps the subspace parallel to the edge ${\bf e}\subset \R^2$ to the 
		perpendicular subspace ${\bf e^\perp} \subset \R^2$ by one-quarter turn (either 
		clockwise or counter-clockwise, but consistent across all vertices).
	\end{proof}
	
	The technique of rotating vector values has been previously utilized to show the 
	equivalence of rotations and (scaling) of a structure about the center of rotation 
	\cite{MechanismsMitchell2016}. This is evident in Figure~\ref{fig:rotation_scale} (left) 
	and 
	(right).
	
	\begin{figure}[th]
		\begin{subfigure}[t]{0.3\textwidth}\centering
			\includegraphics{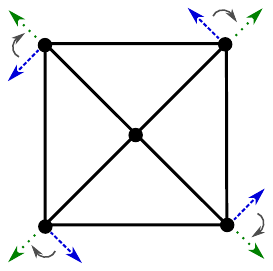}
		\end{subfigure}
		\begin{subfigure}[t]{0.38\textwidth}\centering
			\includegraphics{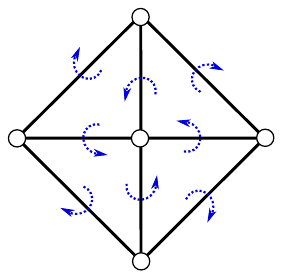}
		\end{subfigure}
		\begin{subfigure}[t]{0.3\textwidth}\centering
			\includegraphics{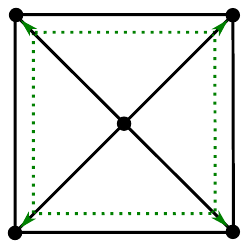}
		\end{subfigure}
		\caption{The truss $(X,p)$ from Figure~\ref{fig:force_boxed} is rotated 
			counterclockwise (left). This system of edge rotations on $(X,p)$ can be transferred 
			to the dual diagram $(\tilde{X}, q)$ (center). Rotating the chain (left) results in a 
			parallel deformation (right).}
		\label{fig:rotation_scale}
	\end{figure}
	
	Both parallel and perpendicular drawing conventions $q$ and $q'$ of $\tilde{X}$ 
	have been widely utilized \cite{Williams2016}. Where dual edges are 
	drawn parallel is known as the {\it Cremona convention} while where dual 
	edges are drawn perpendicular is known as {\it Maxwell's convention}.
	
	Note the sheaf $\tilde{\calg}$ has been predetermined to be the position sheaf over 
	$\tilde{X}$ with respect to a realization $q$ parallel to $p$. By 
	Lemma~\ref{lem:force_position_iso}, its reciprocal $\tilde{\calg}^\vee$ is the force 
	cosheaf over $\tilde{X}$ with respect to the realization $q'$ after rotating dual vertex 
	coordinates. The reciprocal cosheaf exact sequence to sequence~\eqref{eq:2D force SES} 
	is
	\begin{equation}\label{eq:2D dual SES}
		0\to \tilde{\calg}^\vee \xrightarrow{\pi^\vee} \overline{\R^2}
		\xrightarrow{\phi^\vee} \tilde{\calf}^\vee \to 0
	\end{equation}
	where $\overline{\R^2}$ is the constant cosheaf over $\tilde{X}$. Note 
	that the adjoint maps not only flip the domain and codomains of linear incidence maps 
	internal to (co)sheaves, but also flip the direction of linear (co)sheaf maps in the exact 
	sequence.
	
	With $\tilde{\calg}^\vee$ the force cosheaf over $(\tilde{X}, q')$ and $\overline{\R^2}$ 
	the two dimensional constant cosheaf, sequence~\eqref{eq:2D dual SES} is 
	fundamentally the same as the original cosheaf sequence~\eqref{eq:2D force SES}, only 
	over $\tilde{X}$ instead of $X$. Consequently, the double Poincar\'e dual sheaf 
	$\doubletilde{\calf}^\vee = \calf^\vee$ {\it is} a position sheaf. The cohomology 
	$H^0 \calf^\vee$ of consists of alternative realizations $p'$ of $X$ parallel to $q'$. 
	These 
	new diagrams $(X,p')$ are thus perpendicular to the starting diagram $(X,p)$. This can 
	also be seen from employing Lemma~\eqref{lem:force_position_iso} directly from the 
	original force cosheaf $\calf$ over $(X,p)$.
	
	The consequence of reciprocal and symmetric cosheaf sequences is this. 
	Theorems~\ref{thm:plane2D} and~\ref{thm:second_theorem} both apply in the 
	reciprocal setting, from diagrams $(\tilde{X},q')$ to $(X,p')$. After rotating by one 
	quarter 
	turn, their relationship is the same as that of $(X,p)$ and $(\tilde{X},q)$. This symmetry 
	allows the combination of the theorems combining their effects.

	\begin{theorem}[Reciprocal Relations]\label{thm:plane reciprocity}
		Suppose $(X, p)$ and $(\tilde{X}, q)$ are planar reciprocal diagrams over 
		the plane $\R^2$, both with spherical $S^2$ regular cellular topology. There are 
		isomorphisms between the following vector spaces:
		\begin{itemize}
			\item[(i)] The space of mechanisms and global rotations of $(X,p)$.
			\item[(ii)] The space of parallel deformations of $(X,p)$ up to global translation.
			\item[(iii)] The space of impossible edge rotations over $(\tilde{X}, 
			q)$.
			\item[(iv)] The space of pure self shear stresses over $(\tilde{X}, q)$.
			\item[(v)] The space of axial self stresses over $(\tilde{X}, q)$.
		\end{itemize}
	\end{theorem}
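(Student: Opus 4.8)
The plan is to identify each of the five vector spaces with a specific (co)homology group built from the cosheaves over $X$ and $\tilde{X}$, and then to assemble a web of isomorphisms out of results already in hand. Concretely, item (i) is the kernel of $\phi\colon H_0\calf \to \R^2$ in the exact sequence~\eqref{eq:2D2}, so by exactness (i)$\iso H_1\calg$, which Poincar\'e duality identifies with $H^1\tilde{\calg}$; item (iii) is by definition $H^1\tilde{\calg}$, the impossible edge rotations of the position sheaf over $(\tilde{X},q)$; item (iv) is $H_1\tilde{\calg}^\vee$ via the interpretation of self shear in Example~\ref{ex:self shear}; item (v) is $H_1$ of the force cosheaf over $(\tilde{X},q)$; and item (ii) is $H^0\calj$ modulo the translation subspace, where $\calj$ is the position sheaf over $(X,p)$. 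With these dictionary entries fixed, the theorem becomes a chain of established isomorphisms.

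First I would record (i)$\iso$(iii): this is exactly Theorem~\ref{thm:second_theorem}, which already produces the bijection between mechanisms and global rotations of $(X,p)$ and impossible edge rotations of $(\tilde{X},q)$, both realized as $H_1\calg\iso H^1\tilde{\calg}$ through the connecting homomorphism of~\eqref{eq:2D2}. The links (iii)$\iso$(iv)$\iso$(v) are pure duality. Applying Theorem~\ref{thm:linear_homology} to the cosheaf $\tilde{\calg}^\vee$ (whose linear dual is $\tilde{\calg}$) gives $H_1\tilde{\calg}^\vee\iso H^1\tilde{\calg}$, i.e.\ (iv)$\iso$(iii). Lemma~\ref{lem:force_position_iso}, applied over $\tilde{X}$, identifies the reciprocal cosheaf $\tilde{\calg}^\vee$ with the force cosheaf over $(\tilde{X},q')$; since a global quarter turn is an isometry that leaves the scalar self-stress equations on edges unchanged, $H_1$ of the force cosheaf over $(\tilde{X},q)$ and over $(\tilde{X},q')$ coincide, yielding (v)$\iso$(iv). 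Geometrically this single isomorphism is precisely the reinterpretation of perpendicular self shear as parallel axial self stress under a quarter turn.

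The remaining and most delicate link is (i)$\iso$(ii). Here I would invoke Lemma~\ref{lem:force_position_iso} over $X$: it gives $\calf\iso\calj^\vee$, hence $\calj\iso\calf^\vee$, and Theorem~\ref{thm:linear_homology} then furnishes $H^0\calj\iso H_0\calf$ through the vertex-wise quarter turn $\mu$. The subtlety is the global-translation bookkeeping. An infinitesimal translation assigns a constant vector to every vertex, and $\mu$ (a uniform $90^\circ$ rotation of each vertex stalk) sends a constant assignment to another constant assignment; thus $\mu$ carries the translation subspace of $H^0\calj$ isomorphically onto the translation subspace of $H_0\calf$, which is exactly a complement to $\ker\phi$ under the split surjection $\phi\colon H_0\calf\to\R^2$ of~\eqref{eq:2D2}. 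Passing to quotients, $\mu$ descends to an isomorphism from $H^0\calj/\R^2$ (item (ii)) onto $H_0\calf/\R^2\iso\ker\phi$ (item (i)). Under this identification the global rotations of (i) correspond to global scalings in (ii) and mechanisms correspond to parallel mechanisms, matching the rotation--scaling duality illustrated in Figure~\ref{fig:rotation_scale}.

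The main obstacle I anticipate is not any single hard computation but the careful tracking of realizations and their quarter-turn rotations: each isomorphism above quietly switches between the parallel convention $q$ and the perpendicular convention $q'$, and between the ``axial'' and ``shear'' (respectively ``rotation'' and ``scaling'') interpretations. The entire argument is licensed by the symmetry of the reciprocal construction---the fact that the double Poincar\'e dual $\doubletilde{\calf}^\vee$ is again a position sheaf---which is what allows Theorems~\ref{thm:plane2D} and~\ref{thm:second_theorem} to be applied verbatim over $\tilde{X}$. Once these identifications are pinned down and the translation subspaces are matched, every arrow is an isomorphism of finite-dimensional vector spaces and the five spaces are mutually isomorphic.
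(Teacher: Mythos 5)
Your proposal is correct and follows essentially the same route as the paper: it establishes (i)$\iso$(iii) via Theorem~\ref{thm:second_theorem}, (iii)$\iso$(iv)$\iso$(v) via linear duality (Theorem~\ref{thm:linear_homology}) and Lemma~\ref{lem:force_position_iso} over $\tilde{X}$, and (i)$\iso$(ii) via Lemma~\ref{lem:force_position_iso} over $X$, with the same quarter-turn and translation-subspace bookkeeping. The only cosmetic difference is that the paper additionally records (ii)$\iso$(v) directly through Theorem~\ref{thm:plane2D} to close a commutative square, whereas you reach (ii) solely through (i); your chain already suffices to prove all five isomorphisms.
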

	\begin{proof}
		We will prove the equivalence of (i) and (ii) on the diagram $(X,p)$, then prove the 
		equivalences on the diagram $(\tilde{X}, q)$.
		
		By Lemma~\ref{lem:force_position_iso} and Theorem~\ref{thm:linear_homology}, 
		the 
		homology of the force cosheaf $\calf$ over $(X,p)$ is 
		isomorphic to the homology of the position sheaf $\mu \calf^\vee$ over diagram 
		$(X,p)$. Here, movement from a mechanism or rotation system (i) in $H_0 \calf$ is 
		rotated by one quarter turn to form an parallel axial extension (ii) in $H^0 
		\mu \calf^\vee$. This latter homology space is equivalent to the space of parallel 
		deformations of $p$ (ii). Removing the global translational degrees of freedom results 
		in the expected isomorphism $H_0 \calf - \R^2 \iso H^0 \mu \calf^\vee / \R^2$.
		
		Recall that $\tilde{\calg}$ is the position sheaf over $(\tilde{X}, q)$. The space of 
		impossible edge rotations $H^1 \tilde{\calg}$ is isomorphic to the space of pure shear 
		stresses $H_1 \tilde{\calg}^\vee$ as described Example~\ref{ex:self shear}. 
		Lemma~\ref{lem:force_position_iso} shows that this cosheaf $\tilde{\calg}^\vee$ is 
		isomorphic to the force cosheaf $\mu^{-1}\tilde{\calg}^\vee$ over $(\tilde{X}, q)$ 
		after rotating by one quarter turn. This gives an isomorphism between the space (v) 
		of axial self stresses $H_1 \mu^{-1} \tilde{\calg}^\vee$ and (iv).
		
		Lastly, we connect the quantities over the two diagrams. 
		Theorem~\ref{thm:second_theorem} draws the 
		equivalence between degrees of freedom $H_0 \calf$ excluding translations and the 
		homology space $H^1 \tilde{\calg}$ characterizing impossible dual rotations on the 
		dual diagram $(\tilde{X},q)$. This is an isomorphism between (i) and (iii). Similarly, 
		Theorem~\ref{thm:plane2D} gives the equivalence between axial self-stresses and 
		dual parallel realizations (equivalent to parallel deformations, as discussed in 
		Section~\ref{sec:position} and Figure~\ref{fig:position_square}) up to translation. This 
		is an isomorphism between (ii) and (v). The following is a commutative diagram of the 
		isomorphisms at play:
		\begin{equation}
			\begin{tikzcd}
				H_0 \calf - \R^2 \ar[r, "\vee"] & H^0 \calf^\vee/\R^2 \ar[r, "\mu"] \ar[l] \ar[d, 
				"\tilde{\vartheta}^\vee"] & H^0 \mu\calf^\vee/\R^2 \ar[d, 
				"-\tilde{\vartheta}"] \\
				H^1 \tilde{\calg} \ar[u, "\tilde{\vartheta}"] \ar[r, "\vee"] & H_1\tilde{\calg}^\vee 
				\ar[l] & H_1 \mu^{-1}\tilde{\calg}^\vee \ar[l, "\mu"]
			\end{tikzcd}
		\end{equation}
	\end{proof}
	
	Theorem~\ref{thm:plane reciprocity} is indifferent to whether $(X, p)$ is considered as 
	a form diagram or $(\tilde{X}, q)$ is. These relationships are best understood visually, 
	with example in Figure~\ref{fig:reciprocal_mechanism}.
	
	\begin{figure}[!ht]
		\begin{subfigure}[t]{0.24\textwidth}\centering
			\includegraphics{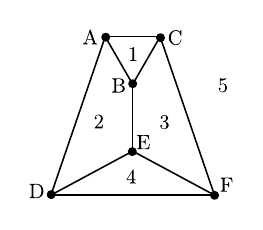}
			\caption{}
		\end{subfigure}
		\begin{subfigure}[t]{0.24\textwidth}\centering
			\includegraphics{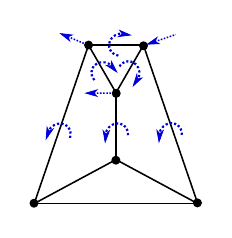}
			\caption{}
		\end{subfigure}
		\begin{subfigure}[t]{0.24\textwidth}\centering
			\includegraphics{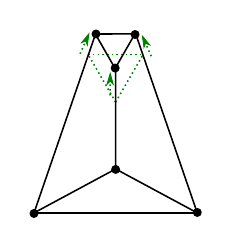}
			\caption{}
		\end{subfigure}
		\begin{subfigure}[t]{0.19\textwidth}\centering
			\includegraphics{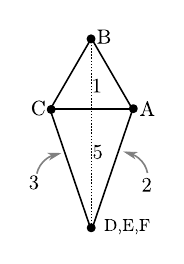}
			\caption{}
		\end{subfigure}
		\begin{subfigure}[t]{0.29\textwidth}\centering
			\includegraphics{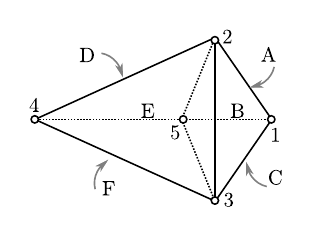}
			\caption{}
		\end{subfigure}
		\begin{subfigure}[t]{0.39\textwidth}\centering
			\includegraphics{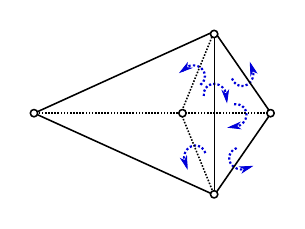}
			\caption{}
		\end{subfigure}
		\begin{subfigure}[t]{0.29\textwidth}\centering
			\includegraphics{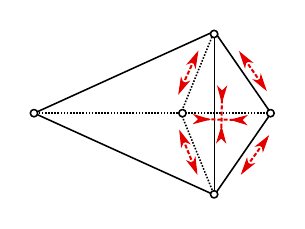}
			\caption{}
		\end{subfigure}
		\caption{From a form diagram $(X,p)$ (a), a force diagram $(\tilde{X}, 
		q)$	(e) is drawn by holding the five interior edges of (a) in tension and the outside 
		four in compression. We investigate the equivalent spaces to the mechanism (b) as 
			described by Theorem~\ref{thm:plane reciprocity}. The motion of vertices, shifted 
			by one quarter turn, are a parallel edge extension (c). The length changes of 
			each member form the parallel realization (d). Over the dual, the mechanism (b) 
			translates to a set of impossible edge rotations (f) by 
			Theorem~\ref{thm:second_theorem}. These rotations in (f) are equivalent to a 
			degree of self shear over the force diagram. Shifting these dual edge rotations by 
			one 
			quarter turn in the plane forms an axial self stress (g). By reciprocity and 
			Theorem~\ref{thm:plane2D}, (g) generates the parallel diagram (d).}
		\label{fig:reciprocal_mechanism}
	\end{figure}

	\subsection{The Polyhedral Lifting Correspondence}
	\label{sec:polyhedral_liftings}
	
	Dating back to Maxwell's original papers \cite{ReciprocalMaxwell1864, maxwell_1870}, 
	self stresses over a loaded truss not only correspond to a dual force diagram, but also a 
	``lifting'' of cell structure to a polyhedron lying ``above'' the 
	original truss in three dimensions. Because the base cell complex in planar graphic 
	statics is spherical, the faces between the members of a loaded truss lift to a 
	topologically spherical polyhedron \cite{ParadigmHopcroft1992}. See 
	Figure~\ref{fig:affine_boxed} for an example of a lift over the self stressed cell complex 
	of Figure~\ref{fig:force_boxed}. This lifting relation is commonly denoted the {\it 
	Maxwell--Cremona correspondence} \cite{erickson2020toroidal} and the lifted 
	polyhedron itself has been shown to be a {\it discrete Airy stress function} over the 
	underlying graph \cite{MechanismsMitchell2016}.
	
	\begin{figure}[ht]\centering
		\includegraphics{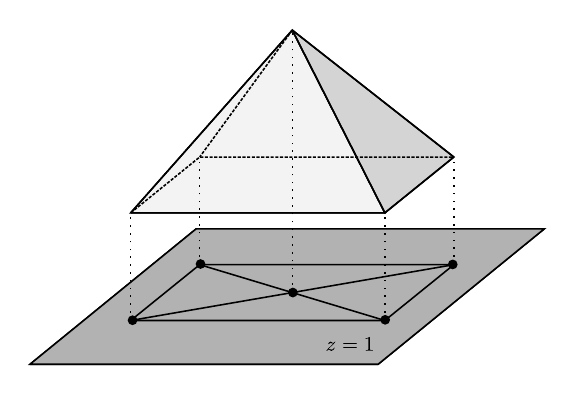}
		\caption{A lifting of the the cell complex in Figure~\ref{fig:force_boxed}
			derived from the dimension of self stress.}
		\label{fig:affine_boxed}
	\end{figure}
	
	A face lifted from $\R^2$ to $\R^3$ is modeled as the graph of an {\it affine 
	function} over the face. Recall that an affine function on $\R^n$ with basis $\{x_i\}$ 
	canonically takes the form $h(x_1, \dots, x_n) = \lambda_1x_1 + \dots + \lambda_n x_n 
	+ 
	\lambda_{n+1}$ for real parameters 
	$\lambda_i$. The collection of these
	functions define the $n+1$ dimensional vector space $A\R^n$ of affine
	functions over $\R^n$. An equivalent perspective is that a affine function is a linear 
	functional in $(\R^{n+1})^\vee$ evaluated on the hyperplane $H = \{x_{n+1}=1\}\subset
	\R^{n+1}$.
	\begin{lemma}{\cite{ParadigmHopcroft1992}}\label{lem:affine_functionals} 
		There is an isomorphism $\eta: (\R^{n+1})^\vee \to A\R^n$ given
		by $\eta(h) = h|_H$, restriction of the function $h$ to the hyperplane $H$.
	\end{lemma}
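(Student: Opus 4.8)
The plan is to exhibit $\eta$ explicitly in coordinates and verify directly that it is a linear bijection; since both domain and codomain are $(n+1)$-dimensional, surjectivity (or injectivity) alone would suffice, but the coordinate description gives the inverse for free. First I would note that $\eta$ is manifestly linear: restriction of functions to a fixed subset is a linear operation, so for $h_1, h_2 \in (\R^{n+1})^\vee$ and scalars $a, b$ one has $(a h_1 + b h_2)|_H = a(h_1|_H) + b(h_2|_H)$, giving a linear map $\eta : (\R^{n+1})^\vee \to A\R^n$.

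Next I would write a general functional in the standard dual basis as $h(x_1, \dots, x_{n+1}) = \mu_1 x_1 + \cdots + \mu_{n+1} x_{n+1}$, and parametrize the hyperplane $H = \{x_{n+1} = 1\}$ by its first $n$ coordinates, identifying $H$ with $\R^n$ via $(x_1, \dots, x_n) \mapsto (x_1, \dots, x_n, 1)$. Under this identification $\eta(h) = h|_H$ sends $(x_1, \dots, x_n)$ to $\mu_1 x_1 + \cdots + \mu_n x_n + \mu_{n+1}$, which is exactly the affine function with parameters $\lambda_i = \mu_i$. This confirms that $\eta$ indeed lands in $A\R^n$ and makes the map completely transparent in coordinates.

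Finally, the coefficient correspondence $\mu_i \leftrightarrow \lambda_i$ reads off an explicit two-sided inverse: the affine function with parameters $\lambda_1, \dots, \lambda_{n+1}$ is the image of the functional with $\mu_i = \lambda_i$. Hence $\eta$ is a linear bijection, i.e. an isomorphism, consistent with the fact that $\dim (\R^{n+1})^\vee = n+1 = \dim A\R^n$. Insofar as there is any subtlety at all, the only point requiring care is the bookkeeping of the identification of $H$ with $\R^n$: one must check that fixing $x_{n+1} = 1$ turns the $x_{n+1}$-component $\mu_{n+1}$ into the \emph{constant} term $\lambda_{n+1}$ of the affine function rather than a linear one. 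Once this identification is pinned down, the isomorphism is immediate, so I expect no genuine obstacle.
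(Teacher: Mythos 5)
Your proof is correct. The paper itself gives no proof of this lemma---it cites Hopcroft--Kahn and merely remarks beforehand that an affine function ``is a linear functional in $(\R^{n+1})^\vee$ evaluated on the hyperplane $H=\{x_{n+1}=1\}$''---and your coordinate computation (restriction sends $\mu_1 x_1+\cdots+\mu_{n+1}x_{n+1}$ to $\mu_1 x_1+\cdots+\mu_n x_n+\mu_{n+1}$, so the coefficient correspondence is the identity and hence a linear bijection) is precisely the standard verification of that remark.
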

	
	Here we let $X$ be realized in the hyperplane $H\subset \R^3$. Lifting $X$ from $H$ to 
	a polyhedron in $\R^3$ requires lifting each face such that these {\it glue together} in 
	the following sense. If $f$ and $g$ are neighboring faces incident to an edge $e$, then 
	the graphs of two affine functions $h_f$ and $h_g$ over these faces must meet and be 
	equal over the edge $e$. The difference $h_f - h_g$ will be the zero function restricted 
	to the edge $e$, with gradient in $H$ perpendicular to $e$.
	
	A collection of affine functions over faces $h = \{h_{f_i}\}$ is a lift of a diagram $(X,p)$ if 
	and only if pairs of affine functions glue together over all conjoining edges. This is a 
	cycle over the {\it affine cosheaf} $\cala$ yet to be defined. The homology $H_2 \cala$ 
	will consists of all polyhedral lifts of the underlying cell complex $X$.
	
	In a polyhedral lift, not only faces but edges and vertices are lifted as well to $\R^3$. 
	Modeling a lift of all cells, the cosheaf $\cala$ has stalks comprised of spaces of 
	locally defined affine functions, namely an isomorphism $\cala_c \iso 
	A\R^{\textrm{dim }c}$. To formalize this, let stalks $\cala_c$ consist of {\it equivalence 
	classes of affine functions} where $h_c \sim k_c$ if $h_c - k_d$ is the zero function 
	when 
	restricted to the cell $c$. Functions in the same equivalence class differ only away from 
	their respective cell $c$. Under cosheaf extension maps $\cala_{d \rhd c}$, equivalence 
	classes merge as constraints from the underlying cell geometry becomes less 
	restrictive. With stalks consisting of quotient vector spaces, the structure of $\cala$ is of 
	a quotient cosheaf.
	
	\begin{definition}[Zero Locus Cosheaf]
		Suppose $(X,p)$ is a diagram in the hyperplane
		$H\subset \R^{n+1}$. Define $\calz$ as the {\it zero locus cosheaf} with
		stalks
		\begin{equation}
			\calz_c = \textrm{span} \{ h\in A\R^n : h(p_v) = 0 \textrm{ for every vertex } v\lhd c 
			\}
		\end{equation}
		
		of affine functions that are zero on the realization of $c$. Extension maps
		$\calz_{c \lhd d}$ are inclusions of subspaces $\calz_d\to \calz_c$.
	\end{definition}
	
	The zero locus cosheaf $\calz$ has stalks of functions that vanish over their respective 
	cells. Through the lens of Lemma~\ref{lem:affine_functionals}, an affine function in 
	$\calz_c$ is equivalent to a linear functional with the cell $c$ (in $H$) contained in its 
	null space. It is best to require the vertex coordinates to have full dimensional span 
	$\dim c + 1$, otherwise the cell is degenerate. In dimension 2, specifying no degeneracy 
	means no lifted faces are vertical with infinite gradient.
	
	\begin{definition}[Affine Cosheaf]
		There is an injective map $\phi: \calz \to \overline{A\R^n}$ taking each
		subspace of affine functions $\calz_c$ into the total space of affine
		functions $\overline{A\R^n}_c$. Define the {\it affine cosheaf} $\cala$ as the
		quotient of this inclusion map.
	\end{definition}
	
	\begin{figure}[ht]\centering
		\includegraphics{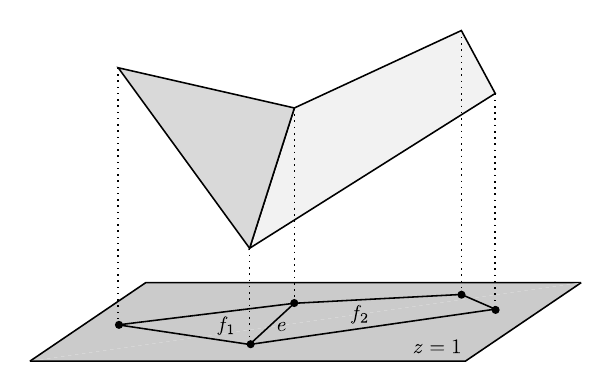}
		\caption{A local view of the affine cosheaf $\cala$. There are two affine
			functions in $A\R^2$ restricted to the faces $f_1$ and $f_2$. The graphs
			of these functions are pictured. These functions coincide where they
			intersect over $e$.}
		\label{fig:affine_function}
	\end{figure}
	
	The affine cosheaf $\cala$ in dimension $n=2$ is most relevant to graphic statics. 
	With the association $\cala = \overline{A\R^2}/\calz$, elements of the stalks of $\cala$ 
	are equivalence classes, with each class consisting of functions that differ by an element 
	of $\calz$. Because stalks $\calz_f$ are zero over non-degenerate faces, 2-chains of 
	$\cala$ are regular affine functions. A cycle $h\in H_2 \cala$ is a 
	collection of these affine functions over faces where the difference of functions 
	$h_f-h_g$ 
	is in the zero class in $\cala_e$, or an element of $\calz_e$, over every set $e\lhd f,g$. 
	The piece-wise graph of the 2-cycle $a$ then is a polyhedron in $\R^3$ 
	with lifted faces glued along their boundary edges. Elements of $H_2 \cala$ are {\it 
		polyhedral liftings} of $(X,p)$.
	
	\begin{lemma}\label{le: f z isomorphism}
		\label{lem:affine iso} The force cosheaf $\calf$ over the hyperplane $H\subset 
		\R^3$ and the cosheaf
		$\calz$ over $A\R^2$ are isomorphic.
	\end{lemma}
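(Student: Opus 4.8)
The plan is to construct an explicit stalk-wise isomorphism of cosheaves, first reformulating $\calz$ into a concrete ``annihilator'' form. Applying the inverse of the map $\eta$ from Lemma~\ref{lem:affine_functionals}, the stalk $\calz_c$ is carried to the space of linear functionals $h\in(\R^3)^\vee$ with $h(p_v)=0$ for every vertex $v\lhd c$; since each $p_v$ lies in the hyperplane $H$, this condition is unaffected by restriction to $H$, so $\eta$ identifies $\calz_c$ with the annihilator $W_c^{\circ}$ of $W_c=\operatorname{span}\{p_v: v\lhd c\}\subseteq\R^3$. Using the standard inner product to pass from functionals to vectors, I would further identify $W_c^{\circ}$ with the orthogonal complement $W_c^{\perp}\subseteq\R^3$. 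Both identifications are restrictions of one global linear isomorphism, so they automatically intertwine the inclusion extension maps of $\calz$ with the inclusions $W_d^{\perp}\hookrightarrow W_c^{\perp}$ (valid because $c\lhd d$ forces $W_c\subseteq W_d$). This reduces the lemma to producing a cosheaf isomorphism $\mu:\calf\to\calz'$, where $\calz'$ is the cosheaf with stalks $\calz'_c=W_c^{\perp}$ and inclusion extension maps.

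A dimension count confirms the stalks can match: a nondegenerate $k$-cell has $\dim W_c=k+1$, so $\dim W_c^{\perp}=2-k$, which equals $\dim\calf_c$ (namely $\R^2$ over vertices, the line ${\bf e}$ over edges, and $0$ over faces). The construction of $\mu$ is the heart of the argument. Over a vertex I would set $\mu_v(x)=p_v\times x$ (cross product) for $x$ in the horizontal plane $\calf_v=\{x_3=0\}\iso\R^2$; since $p_v\times x$ is orthogonal to $p_v$, it lands in $W_v^{\perp}$, and since $p_v$ is not horizontal the map is injective, hence an isomorphism onto the $2$-dimensional space $W_v^{\perp}$. Over an edge $e$ with endpoints $u,v$ I would set $\mu_e(x)=p_v\times x$ for $x\in{\bf e}$.

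The crux—and the step I expect to be the main obstacle—is verifying that $\mu$ is a well-defined cosheaf map, i.e.\ that the two endpoint maps agree on the shared edge direction. The naive ``project to the horizontal plane'' identification fails here by a quarter turn, which is exactly why the cross product is needed. The computation that saves the day is that, for $x=p_v-p_u\in{\bf e}$,
\begin{equation*}
\mu_v(x)=p_v\times(p_v-p_u)=-\,p_v\times p_u=p_u\times p_v,\qquad \mu_u(x)=p_u\times(p_v-p_u)=p_u\times p_v,
\end{equation*}
so both vertex maps restrict on ${\bf e}$ to the same map $\mu_e$, whose image $\operatorname{span}\{p_u\times p_v\}$ is precisely $W_e^{\perp}$. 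This simultaneously shows $\mu_e$ is an isomorphism onto $W_e^{\perp}$ and that the square
\begin{equation*}
\begin{tikzcd}
\calf_e \ar[r,"\mu_e"] \ar[d,hook] & W_e^{\perp} \ar[d,hook]\\
\calf_v \ar[r,"\mu_v"'] & W_v^{\perp}
\end{tikzcd}
\end{equation*}
commutes, since both composites send $x\in{\bf e}$ to $p_v\times x$. As the only nontrivial incidences in a $2$-complex are vertex--edge (just checked) and edge--face (trivial, since $\calf_f=W_f^{\perp}=0$ for nondegenerate faces), this is the only square to verify, and the unit normalizations and incidence signs in the definition of $\calf$ merely rescale the $\mu_c$ without disturbing commutativity. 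Assembling the chain $\calf\xrightarrow{\ \mu\ }\calz'\iso\calz$ of cosheaf isomorphisms then yields $\calf\iso\calz$.
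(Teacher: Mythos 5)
Your construction is essentially the paper's own proof: the paper defines $\tau_e(w)=w(p_u\times p_v)^\vee$ and $\tau_v(b)=(b\times p_v)^\vee$ and checks commutativity by the same identity $(p_u-p_v)\times p_v=p_u\times p_v$, differing from your $\mu$ only in that it lands directly in the functionals $(\cdot)^\vee$ rather than passing through the inner-product identification of $\calz_c$ with the orthogonal complement $W_c^\perp$. The argument is correct and matches the paper's approach.
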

	\begin{proof}
		
		Here we define the cosheaf isomorphism $\tau: \calf \to \calz$. Over an edge
		with incident vertices $u,v\lhd e$, the map $\tau_e$ is given by
		\begin{equation}
			\tau_e(w) = w (p_u \times p_v)^\vee
		\end{equation}
		where $\times$ is the cross product on vectors in $\R^3$ and $w \in \calf_e
		\iso \R$ is a scalar representing a multiple of the vector $p_u-p_v$. We further define 
		$\tau_v$ as
		\begin{equation}
			\tau_v(b) = (b \times p_v)^\vee
		\end{equation}
		over vertex $v$ where $b\in \calf_v \iso \R^2 \times \{1\}$ is a vector in
		$H\subset \R^3$. By counting dimensions of the resulting vector space of
		functions, $\tau$ is an isomorphism over each cell. Two calculations
		\begin{equation}
			\tau_u\circ \calf_{e\rhd u}(w)= (w(p_u-p_v) \times
			p_u)^\vee = w (p_u \times p_v)^\vee = \calz_{e \rhd u}\circ \tau_e(w)
		\end{equation}
		\begin{equation}
			\tau_v\circ \calf_{e\rhd v}(w) = (w(p_u-p_v) \times
			p_v)^\vee = w (p_u \times p_v)^\vee = \calz_{e\rhd v}\circ \tau_e(w)
		\end{equation}
		prove that $\tau$ indeed is a cosheaf isomorphism.
	\end{proof}
	
	A corollary to Lemma~\ref{le: f z isomorphism} is the existence of a short exact 
	sequence
	\begin{equation}
		0 \to \calf \xrightarrow{\phi\circ \tau} \overline{A\R^2} \to \cala \to 0.
	\end{equation}
	
	Note that for any edge with endpoints $v,u\lhd e$, for any two height values of $v$ and 
	$u$ there exists an affine function $h_e$ over $e$ which takes these values over $v$ 
	and 
	$u$. Because there is always an edge connected to each vertex, the 
	boundary map $\bdd: C_1 \cala \to C_0 \cala$ is surjective and $H_0 \cala$ is
	trivial. Examining the resulting long exact sequence, we are greeted by a pair of short 
	exact sequences.
	
	\begin{equation}\label{eq:affine_1}
		0\to A\R^2 \to H_2 \cala \to H_1 \calf \to 0
	\end{equation}
	\begin{equation}\label{eq:affine_2}
		0 \to H_1 \cala \to H_0 \calf \to A\R^2 \to 0
	\end{equation}
	
	Sequence~\eqref{eq:affine_1} gives rise to the Maxwell--Cremona lifting correspondence.
	
	\begin{theorem}[The Maxwell--Cremona Correspondence
		\cite{ReciprocalMaxwell1864, Whiteley1982a,
			ParadigmHopcroft1992}] \label{thm:maxwell_lift}
		Let $(X,p)$ be spherical diagram in the projective plane $H \subset \R^3$. There is a 
		bijection between self stresses $w\in H_1 \calf$ and polyhedral liftings $h\in H_2 
		\cala$ up to shifts of a global affine function. Over an edge bounded by two faces
		$e\lhd f,g$, the force $w_e$ over each edge is equal to
		\begin{equation}\label{eq:grad_force}
			w_e = [e:f] \frac{h_f(p_*) - h_g(p_*)}{\det[p_u,p_v,p_*]}
		\end{equation}
		where $\det$ is the determinant and $p_*$ is an arbitrary point in $H$ not collinear 
		with 
		$p_u,p_v$. Expression~\eqref{eq:grad_force} states that the difference in gradients 
		between adjacent lifted faces is equal to the force within the connecting edge.
	\end{theorem}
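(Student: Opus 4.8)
The plan is to mirror the structure of the proof of Theorem~\ref{thm:plane2D}, replacing the role of sequence~\eqref{eq:2D1} with the short exact sequence~\eqref{eq:affine_1}, namely $0 \to A\R^2 \xrightarrow{\pi} H_2 \cala \xrightarrow{\vartheta} H_1 \calf \to 0$. This sequence is itself extracted from the long exact homology sequence~\eqref{eq:les} of the short exact sequence of cosheaves $0 \to \calf \xrightarrow{\phi\circ\tau} \overline{A\R^2} \to \cala \to 0$, after noting that $C_2\calf = 0$ forces $H_2\calf = 0$, that $H_1\overline{A\R^2} = 0$ since $X$ is spherical, and that $H_0\cala = 0$ because the boundary $\bdd : C_1\cala \to C_0\cala$ is surjective. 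The bijection and the value formula~\eqref{eq:grad_force} then fall out of, respectively, the exactness of~\eqref{eq:affine_1} and the explicit zig-zag defining the connecting homomorphism $\vartheta$.

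First I would establish the bijection. By exactness at $H_1\calf$ the map $\vartheta$ is surjective, so every self stress $w \in H_1\calf$ is realized as $\vartheta(h)$ for some lift $h \in H_2\cala$. For two such preimages $h, h'$, the difference $h - h'$ lies in $\ker\vartheta$, which by exactness at $H_2\cala$ equals $\im\pi$. I would identify this image concretely: since $X$ is spherical, $H_2\overline{A\R^2} \iso A\R^2$ is generated by the fundamental cycle carrying a single affine function assigned identically to every face, so $\im\pi$ consists exactly of the lifts obtained by adding one globally defined affine function across the whole complex. Hence $h$ and $h'$ determine the same self stress precisely when they differ by a global affine function, giving the claimed bijection up to such shifts.

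Next I would extract the value formula by tracing $\vartheta$ through diagram~\eqref{eq:connecting_hom}. A class in $H_2\cala$ is represented by a $2$-cycle $h = \{h_f\}$; because $\calz_f = 0$ over nondegenerate faces, $\pi_f$ is an isomorphism and the $\pi$-preimage of $h$ is simply $h$ viewed in $C_2\overline{A\R^2}$. Applying the boundary of $\overline{A\R^2}$ yields, over an edge $e \lhd f,g$, the affine function $[e:f](h_f - h_g)$ (using the oriented-surface relation $[e:g] = -[e:f]$), which the cycle condition forces to lie in $\calz_e = \im(\phi\circ\tau)_e$. Pulling this back through $\tau_e(w_e) = w_e\,(p_u \times p_v)^\vee$ identifies the self stress value via the equality of affine functions $[e:f](h_f - h_g) = w_e\,(p_u \times p_v)^\vee$. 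Evaluating both sides at an arbitrary $p_* \in H$ not collinear with $p_u, p_v$ and using the scalar-triple-product identity $(p_u \times p_v)\cdot p_* = \det[p_u,p_v,p_*]$ yields~\eqref{eq:grad_force}.

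The hard part will be the careful bookkeeping in the last step: keeping the orientation signs $[e:f]$ consistent through the zig-zag, verifying that $(p_u\times p_v)^\vee$ is indeed the element of $\calz_e$ prescribed by $\tau$ (it vanishes on $p_u$ and $p_v$, hence on the edge, via Lemma~\ref{lem:affine_functionals}), and confirming that the resulting $w_e$ is independent of the choice of $p_*$. The remaining assertion that the gradient difference of adjacent lifted faces equals the edge force is then a restatement of the affine-function identity, since $h_f - h_g$ vanishes along $e$ and therefore has gradient perpendicular to $e$ with magnitude scaled by $\det[p_u,p_v,p_*]$, i.e. by the edge length.
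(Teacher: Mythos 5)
Your proposal is correct and follows essentially the same route as the paper's own proof: extracting the short exact sequence~\eqref{eq:affine_1} from the cosheaf sequence, using exactness to get the bijection up to global affine shifts, and tracing the connecting homomorphism through diagram~\eqref{eq:connecting_hom} to land on $[e:f](h_f - h_g) = w_e (p_u \times p_v)^\vee$ in $\calz_e$, then evaluating at $p_*$. The orientation bookkeeping and the identification of $\calz_e$ as spanned by $(p_u\times p_v)^\vee$ that you flag as the delicate points are exactly the steps the paper carries out via Lemma~\ref{le: f z isomorphism}.
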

	\begin{proof}
		The proof is similar to Theorem~\ref{thm:plane2D} and follows from the
		exact sequence~\eqref{eq:affine_1}. Any self stress $w\in H_1 \calf$
		has a preimage $h\in H_2 \cala$ by exactness, and between
		any two preimages $h,k$ of $w$, the difference $h-k$ has the preimage of a
		global affine function in $A\R^2$. Cycles of $ H_2 \overline{A\R^2} \iso A\R^2$ are 
		assignments of the same affine function to all cells. The polyhedral lifts $h,k$ differ by 
		an affine function everywhere.
		
		The value of edge forces $w_e$ follows from the definition of the connecting 
		homomorphism $H_2 \cala \to	H_1 \calf$. Applying the cosheaf boundary map 
		$\bdd_{\overline{A\R^2} }$ to the preimage of $h$ in $C_2 \overline{A\R^2}$
		results in an element $l\in C_1 \overline{A\R^2}$ with terms $l_e =
		[e:f] (h_f-h_g)$ over each edge with incident cells $u,v\lhd e\lhd f,g$. Over such an 
		edge, the affine function $h_f - h_g$ is zero on points $p_u,p_v$ so $l$ is an element 
		of $C_1 \calz$.
		
		Noting that each stalk $\calz_e$ is one dimensional spanned by the function
		$(p_u \times p_v)^\vee$; it follows that the scalar $w_e$ satisfies $l_e = 
		w_e(p_u\times p_v)^\vee$. For an arbitrary choice of $p_*\in H$ not
		co-linear with $p_u,p_v$, note that $(p_u \times p_v)^\vee(p_*) =
		\det[p_u,p_v,p_*] \neq 0$. Evaluating for $w_e$ we find
		\begin{equation}
			w_e = \frac{l_e(p_*)}{\det[p_u,p_v,p_*]} = [e:f]\frac{h_f(p_*) - 
				h_g(p_*)}{\det[p_u,p_v,p_*]}
		\end{equation}
		is a constant scalar independent of $p_*$. Thus
		\begin{equation}
			l_e = [e:f]\frac{h_f(p_*) - h_g(p_*)}{\det[p_u,p_v,p_*]} (p_u \times
			p_v)^\vee
		\end{equation}
		over each edge. Finally, applying the cosheaf isomorphism
		$\tau^{-1}(l) = w$ results in the desired self-stress.
	\end{proof}
	
	The gradient of an affine function $h_f$ over the plane $H\subset \R^3$ consists of the 
	first two coefficients only, the last being a constant term. These two coefficients can be 
	considered as {\it coordinates} of the dual vertex $\tilde{f}$ in $H$. These coordinates 
	assemble into a {\it perpendicular} reciprocal diagram. This outlines an alternative way 
	of 
	generating dual figures; starting from the space of polyhedral lifts of $(X,p)$ there is an 
	injective map to the space of perpendicular reciprocal diagrams $(\tilde{X},q)$ over any 
	cellular oriented manifold \cite{Crapo1993}.
	
	\subsection{Boundary Conditions}
	\label{sec:boundary}
	In prior discussions, a truss is modeled by a cell complex $X$ with the specific topology 
	of 
	the sphere $S^2$. Because a sphere has empty boundary it was not 
	possible to model {\it boundary forces} that may be imparted on the truss. These 
	boundary conditions consist of external or reaction loads imparted onto the structure. 
	With the vast majority of trusses in practice are subject to external loads, an alternative 
	cellular topology must be selected.
	
	External loads acting on a truss are modeled by edges $e_*$ connected to the truss 
	only at one end $v\lhd e_*$ along the exterior of the truss. Here $v$ is the point of 
	application of the foreign force, applied along the line of action of $e_*$. These edges 
	may be external loads $e_l$ or reaction forces $e_r$; both are treated equally as 
	partially open ended edges. With this, the truss including these open ended edges $X$ 
	has the topology of an open disk $B^2\subset S^2$.
	
	Considering boundary conditions, the homology $H_1 \calf$ of the force cosheaf still 
	describes the space of canceling edge forces. These include self stresses as well as 
	equilibrium stresses induced by boundary loads. If the values of external loads $e_l$ are 
	provided beforehand, solving for internal and reaction forces is equivalent to finding a 
	cycle in $H_1 \calf$ with the specified values over edges $e_l$.
	
	Graphic statics readily generalizes to the setting of boundary conditions. The most 
	important difference is that the open disk $B^2$ has (Borel-Moore) homology $H_2 B^2 
	\iso \R$, $H_1 B^2 = 0$ and $H_0 B^2 = 0$. The key difference to spherical topology is 
	that $H_0$ of $X$ (and $H_0$ of constant cosheaves $\overline{\R^n}$) vanish.
	
	Algebraic relations in graphic statics earlier in Section~\ref{sec:2D1} were found to be
	governed by the two exact sequences~\eqref{eq:2D1} and~\eqref{eq:2D2}. When $X$ 
	takes the topology of the open disk, the former sequence remains the same while the 
	latter takes the form
	\begin{equation}
		0 \to H_1 \calg \to H_0 \calf \to 0.
	\end{equation}
	In other words, the space of degrees of freedom $H_0 \calf$ is isomorphic to dual edge 
	rotations $H^1 \tilde{\calg}$ {\it without} needing to account for the global translations. 
	External edges $e_l$ and $e_r$ may lock the truss $(X,p)$ in place, so free translations 
	may not exist at all. Apart from this consideration of global translation,
  Theorems~\ref{thm:plane2D} and~\ref{thm:second_theorem} remain valid in the
  presence of boundary conditions.
	
	In the setting of plane reciprocity the form diagram and the force diagram are no longer 
	symmetrically dual. When $X$ has the topology of an open disk, its dual $\tilde{X}$ has 
	the dissimilar topology of a {\it closed} disk. This is {\it Lefschetz duality} over subsets of 
	manifolds, here $S^2$. If $(X,p)$ is an open form diagram with boundary conditions and 
	$(\tilde{X},q)$ is a closed force diagram, this topological change eliminates the need to 
	reduce by the translation factor of $\R^2$ in points (i) and (ii) in 
	Theorem~\ref{thm:plane 
		reciprocity}.
	
	For the polyhedral lifting correspondence, Theorem~\ref{thm:maxwell_lift} is 
	unchanged over a truss with boundary conditions, as Sequence~\ref{eq:affine_1} does 
	not consider zero dimensional homology. In effect, polyhedral lifts can follow from 
	equilibrium loading conditions as well.
	
	\begin{figure}
		\centering
		\begin{subfigure}[t]{0.45\textwidth}
			\centering \includegraphics{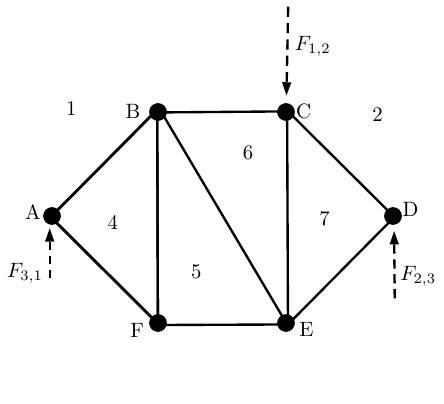}
		\end{subfigure}
		\hspace{0.01\textwidth}
		\begin{subfigure}[t]{0.45\textwidth}
			\centering \includegraphics{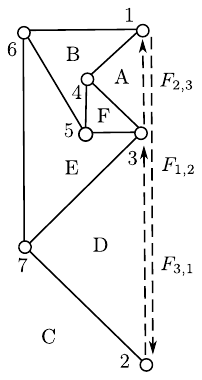}
		\end{subfigure}
		\caption{A truss with boundary conditions $X$ with one dimension of stress is 
		pictured 
			(left). There is one dimension of non-trivial realization of the dual $\tilde{X}$ 
			(right).}
		\label{fig:planar_relative}
	\end{figure}
	
	\begin{figure}[ht]\centering
		\includegraphics{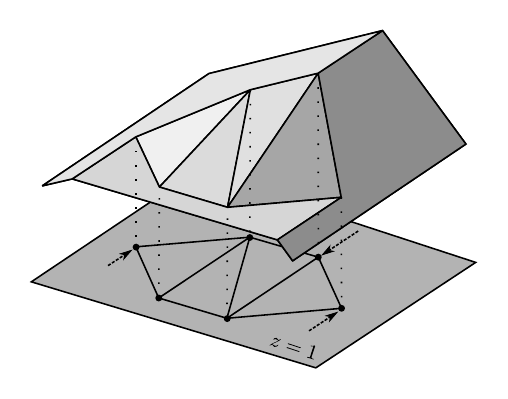}
		\caption{A lifting of the truss with boundary conditions in
			Figure~\ref{fig:planar_relative}.}
		\label{fig:affine_airy}
	\end{figure}
	
	%
	\section{Beyond Planar Graphic Statics} 
	\label{sec:nonsphere}
	%
	There is the natural question of how to extend graphic statics to non-spherical 
	topology, 
	allowing for dual diagrams from non-planar trusses. Previous approaches have focused 
	on dualizing the underlying graph, not the full oriented manifold $M_g$ of genus $g$ as 
	we will here \cite{GeneralizedMicheletti2008, AlgebraicVanMele2014}.
	
	\begin{theorem}[Higher Genus Plane Graphic Statics]\label{thm:genus}
		Let $(X, p)$ be a diagram with the topology of an oriented surface $M_g$ realized in 
		$\R^2$. If the dimension of axial self stress $\dim H_1 \calf$ is 
		greater than $4g$, then there exists a non-trivial parallel dual realization $q$ of
		$\tilde{X}$ in $\R^n$
	\end{theorem}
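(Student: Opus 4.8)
The plan is to reuse the exact-sequence machinery of Section~\ref{sec:2D1} essentially verbatim, the only change being that the base complex is now $M_g$ rather than $S^2$, so that the constant cosheaf contributes nonzero first homology. First I would form the short exact sequence of cosheaves
\begin{equation}
  0 \to \calf \xrightarrow{\phi} \overline{\R^2} \xrightarrow{\pi} \calg \to 0, \qquad \calg = \overline{\R^2}/\calf,
\end{equation}
over $(X,p)$, exactly as in~\eqref{eq:2D force SES}; the construction of $\calf$ and of the quotient $\calg$ is entirely local and so is insensitive to the global topology of the surface. Passing to the long exact sequence in homology~\eqref{eq:les}, and using $H_2\calf = 0$ (the stalks of $\calf$ vanish above dimension $1$) together with $H_0\calg = 0$ (the stalks of $\calg$ vanish on vertices), the relevant initial segment reads
\begin{equation}
  0 \to H_2\overline{\R^2} \xrightarrow{\pi} H_2\calg \xrightarrow{\vartheta} H_1\calf \xrightarrow{\phi} H_1\overline{\R^2}.
\end{equation}
The decisive numerical input is that the homology of the constant cosheaf is the classical homology of the surface with $\R^2$ coefficients (Example~\ref{ex:classic}), so $\dim H_k\overline{\R^2} = 2\dim H_k M_g$; for an oriented genus-$g$ surface this gives $\dim H_2\overline{\R^2} = 2$ and, crucially, $\dim H_1\overline{\R^2} = 4g$.

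Next I would extract the isomorphism hidden in this segment. Since $\pi$ is injective and, by exactness, $\ker\vartheta = \im\pi \iso H_2\overline{\R^2}$ and $\im\vartheta = \ker\phi$, the first isomorphism theorem yields
\begin{equation}
  H_2\calg / \im\pi \;\iso\; \ker\big(\phi \colon H_1\calf \to H_1\overline{\R^2}\big).
\end{equation}
By Poincar\'e duality (Definition~\ref{def:poincare}) one has $H_2\calg \iso H^0\tilde{\calg}$, and the dual sheaf $\tilde{\calg}$ is the position sheaf over $\tilde X$ precisely as in Example~\ref{ex:force_embed}; hence $H_2\calg$ is the space of parallel dual realizations $q$ of $\tilde X$, with the copy $\im\pi \iso \R^2$ corresponding to the constant, global-translation realizations. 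A \emph{non-trivial} parallel realization is thus exactly a nonzero class in $H_2\calg/\im\pi$. Applying rank--nullity to $\phi$,
\begin{equation}
  \dim\ker\phi \;\geq\; \dim H_1\calf - \dim H_1\overline{\R^2} \;=\; \dim H_1\calf - 4g,
\end{equation}
so the hypothesis $\dim H_1\calf > 4g$ forces $\ker\phi \neq 0$, producing a nonzero class in $H_2\calg/\im\pi$ and hence the desired non-trivial realization $q$.

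Finally, the length--force assertion follows from the explicit form of the connecting homomorphism $\vartheta$, exactly as in the proof of Theorem~\ref{thm:plane2D}: lifting a class $c \in H_2\calg$ through $\pi$, applying the boundary map of $\overline{\R^2}$, and reading off edge values shows that the self stress $\vartheta(c) \in \ker\phi$ assigns to each edge $e \lhd f,g$ the oriented length $[e:f](q_{\tilde f} - q_{\tilde g})$ of the dual edge $\tilde e$. The main obstacle, relative to the spherical case, is that the long exact sequence no longer splits into two short exact sequences, since $H_1\overline{\R^2}$ is now nonzero; one cannot read off a clean isomorphism $H_2\calg/\R^2 \iso H_1\calf$, but must instead locate the non-trivial realizations inside $\ker\phi$ and control its dimension by counting against $\dim H_1\overline{\R^2} = 4g$. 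A secondary point to verify carefully is that the position-sheaf interpretation of $\tilde{\calg}$, stated in Example~\ref{ex:force_embed} for $S^2$, transfers to $M_g$; this holds because that identification is made cell-by-cell and uses only the local incidence structure of the dual decomposition, which exists for any regular cellular decomposition of an oriented surface.
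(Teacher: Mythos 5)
Your proposal is correct and follows essentially the same route as the paper: the short exact sequence $0 \to \calf \to \overline{\R^2} \to \calg \to 0$, the induced long exact sequence, the count $\dim H_1 \overline{\R^2} = 4g$ forcing $\phi\colon H_1\calf \to H_1\overline{\R^2}$ to have nonzero kernel, and the identification of classes in $H_2\calg$ outside $\im\pi \iso \R^2$ with non-trivial parallel realizations of $\tilde{X}$. Your version is in fact slightly more explicit than the paper's (spelling out $H_2\calg/\im\pi \iso \ker\phi$ and the rank--nullity bound), but it is the same argument.
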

	\begin{proof}
		For $\calf$ in ambient dimension $n$, the long exact sequence for the
		filtration $\calf \to \overline{\R^2}$ is
		\begin{equation}\label{eq:genus 1}
			0\to \R^2 \to H_2 \overline{\R^2}/\calf \xrightarrow{\vartheta} H_1 \calf 
			\xrightarrow{\phi} H_1 \overline{\R^2} \to \dots
		\end{equation}
		Since $\dim H_1 M_g = 2g$, it follows that $H_1 \overline{\R^2}\iso \R^{4g}$.
		Thus under the assumptions the map $\phi$ above cannot be injective. By exactness 
		of sequence~\eqref{eq:genus 1}, the connecting homomorphism $\vartheta$ is not 
		the zero map. Any non-trivial preimage by $\vartheta$ correlates to a non-trivial 
		parallel realization of $\tilde{X}$.
	\end{proof}
	
	We take care to note that the resulting realizations of $\tilde{X}$ are not
	the algebraic duals in graph theory, where simple cycles are dual to minimal
	cuts. Hence, the above theorem does not contradict Whitney's duality
	\cite{Whitney1931}, which states that a graph has a dual if and only if it
	is planar. Instead, Theorem~\ref{thm:genus} is a statement strictly about geometric 
	duals, linking vertices to dual faces, edges to dual edges, and faces to dual vertices.
	
	Another point is that a realization of the torus $M_1$ is a mapping to $\R^2$ and is not 
	a periodic tiling of the plane, as is the case in \cite{borcea2015liftings,
		erickson2020toroidal}.
	
	\begin{example}[Dependence on Graph Embedding]\label{ex:torus_dual}
		Finding dual realizations from a non-planar graph depends on its embedding in an 
		oriented surface $M_g$. The particular introduction of face cells determines what 
		dual
		realizations result from a self-stress. Even for planar graphs embedded in the sphere 
		$S^2$, the choice of realization does not necessarily determine the dual graph (unless 
		the	graph is 3-connected \cite{Whitney1933}).
		
		To demonstrate, let $(X,p)$ be the cellular decomposition of the torus $M_1$ 
		pictured
		in Figure~\ref{fig:nonplanar}(a) where it contains the complete bipartite
		graph $K_{3,3}$ as a subgraph. The realization of $X$ in $\R^2$ pictured in
		(b) has a non-planar 1-skeleton and four dimensions of self stress in $H_1 \calf_X$. 
		There 
		is one nontrivial dimension of dual realizations of $\tilde{X}$, pictured in (c). The 
		1-skeleton of $\tilde{X}$ is planar, but there is no way to embed faces of $\tilde{X}$ 
		in $\R^2$ without overlapping.
		
		This process outlines a canonical way to dualize a cell complex realized in
		$\R^2$, but not a canonical way to dualize the underlying graphs. The cell complex 
		$X'$ 
		in (d) has a homeomorphic 1-skeleton to (a), but with $(X',p)$ utilizing the same 
		realization (b), the space $H^0 \tilde{\calg}_{\tilde{X}'}$ and all dual realizations of 
		$\tilde{X}'$ are trivial.
	\end{example}
	
	\begin{figure}[!ht]
		\begin{subfigure}[t]{0.50\textwidth}\centering
			\includegraphics{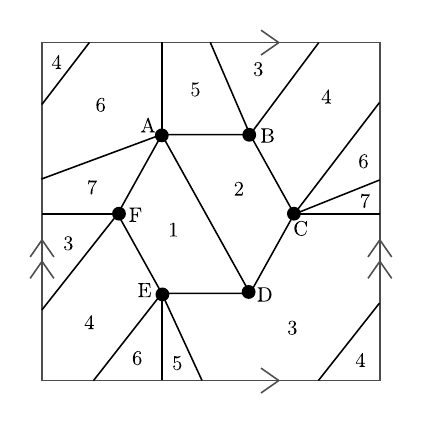}
			\caption{Topology of a cellular decomposition $X$ of $M_1$ containing
				$K_{3,3}$, a non-planar graph.}
		\end{subfigure}
		\begin{subfigure}[t]{0.40\textwidth}\centering
			\includegraphics{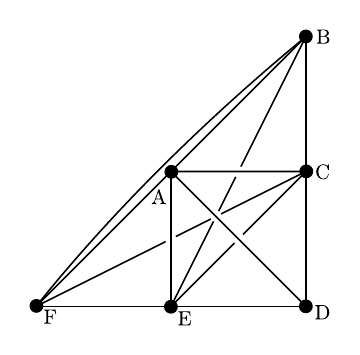}
			\caption{A realization of $X$ in the plane.}
		\end{subfigure}
		
		\begin{subfigure}[t]{0.40\textwidth}\centering
			\includegraphics{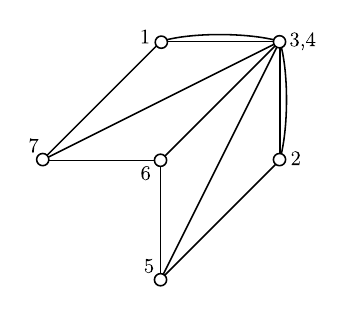}
			\caption{A realization of a dual diagram $\tilde{X}$ to that in (b).}
		\end{subfigure}
		\begin{subfigure}[t]{0.50\textwidth}\centering
			\includegraphics{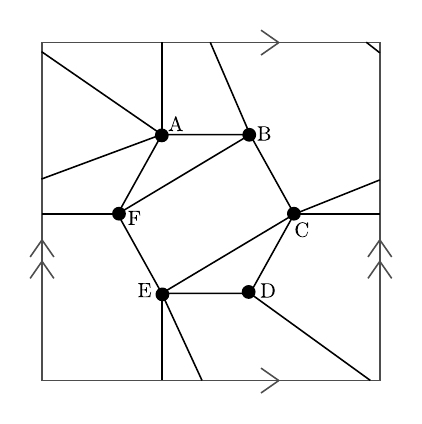}
			\caption{An alternate cellular decomposition $X'$ of $M_1$ with the same
				1-skeleton as (a).}
		\end{subfigure}
		\caption{Graphic Statics duality with non-planar graphs}
		\label{fig:nonplanar}
	\end{figure}
	
	There is an analogue to Theorem~\ref{thm:genus} for polyhedral lifts over higher genus 
	manifolds.
	
	\begin{theorem}[Higher Genus Polyhedral Lifting]\label{thm:genuslift}
		Let $(X,p)$ be a cellular decomposition of an oriented surface $M_g$ realized in 
		$\R^2$. If the dimension of the space of self stresses $H_1 \calf$ is greater than $6g$, then 
		there 
		exists a self stress of $X$ and a non-trivial polyhedral lifting to $\R^3$, where the 
		difference in gradient between adjacent lifted faces is equal to the force within the 
		connecting edge.
	\end{theorem}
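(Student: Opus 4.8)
The plan is to transcribe the proof of Theorem~\ref{thm:genus} almost verbatim, swapping the constant cosheaf $\overline{\R^2}$ for the constant affine cosheaf $\overline{A\R^2}$ and running the long exact sequence attached to the affine short exact sequence rather than the plain force sequence. First I would invoke the corollary to Lemma~\ref{le: f z isomorphism}, which supplies the short exact sequence of cosheaves $0 \to \calf \xrightarrow{\phi\circ\tau} \overline{A\R^2} \to \cala \to 0$ over any diagram realized in $H\subset\R^3$; the genus of $M_g$ plays no role in its construction. Passing to the associated long exact sequence in homology~\eqref{eq:les} isolates the segment
\[
H_2 \calf \to H_2 \overline{A\R^2} \to H_2 \cala \xrightarrow{\vartheta} H_1 \calf \xrightarrow{\phi} H_1 \overline{A\R^2},
\]
which is the genus-$g$ analogue of sequence~\eqref{eq:affine_1}.

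The two topological inputs are then substituted in. Since $\calf$ has no stalks above dimension one, $H_2 \calf = 0$, so $H_2 \overline{A\R^2} \iso A\R^2$ injects into $H_2 \cala$ as the familiar trivial summand of global affine lifts. The decisive quantity is $\dim H_1 \overline{A\R^2}$: the homology of a constant cosheaf is the classical cellular homology with coefficients in the stalk (Example~\ref{ex:classic}), so $\dim H_i \overline{A\R^2} = \dim A\R^2 \cdot \dim H_i(M_g)$. With $\dim A\R^2 = 3$ and $\dim H_1(M_g) = 2g$ this yields $\dim H_1 \overline{A\R^2} = 6g$ --- exactly the $6g$ of the statement, and the point at which the argument diverges from Theorem~\ref{thm:genus}, where the $2$-dimensional stalk $\R^2$ produced $4g$ instead.

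With these values in place the conclusion is immediate. The hypothesis $\dim H_1 \calf > 6g = \dim H_1 \overline{A\R^2}$ forces $\phi$ to have nontrivial kernel, and exactness at $H_1 \calf$ gives $\im \vartheta = \ker \phi \neq 0$. Hence some class in $H_2 \cala$ maps under $\vartheta$ to a nonzero self stress; this class is a polyhedral lift lying outside the affine summand, i.e.\ a nontrivial lift together with its witnessing self stress. The gradient-equals-force identity~\eqref{eq:grad_force} is then inherited unchanged from Theorem~\ref{thm:maxwell_lift}, since $\vartheta$ is computed cell-locally through the boundary map of $\overline{A\R^2}$ and the isomorphism $\tau$, neither of which sees the global genus. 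I expect no genuine obstacle beyond the dimension bookkeeping: the one step deserving care is confirming $\dim H_1 \overline{A\R^2} = 6g$ and that the genuine lifts are precisely those not in the image of $H_2 \overline{A\R^2} \hookrightarrow H_2 \cala$; everything else is a direct reuse of machinery already established.
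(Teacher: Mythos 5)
Your proposal is correct and follows essentially the same route as the paper: both run the long exact sequence of $0 \to \calf \to \overline{A\R^2} \to \cala \to 0$, compute $\dim H_1 \overline{A\R^2} = 3\cdot 2g = 6g$, and conclude that $\phi: H_1\calf \to H_1\overline{A\R^2}$ has nontrivial kernel, hence $\vartheta$ is nonzero and a nontrivial lift exists. Your write-up is in fact slightly more careful than the paper's, which cites the sequence from Theorem~\ref{thm:genus} rather than writing out the affine analogue explicitly, and which leaves the gradient-equals-force clause implicit.
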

	\begin{proof}
		Recall that the space of affine functions $A\R^2$ is three dimensional. Since $\dim 
		H_1 
		M_g = 2g$, it follows that $H_1 \overline{A\R^2}\iso \R^{6g}$. If $H_1 \calf$ has 
		dimension greater than $6g$, the map $\phi: H_1 \calf \to H_1 \overline{A\R^2}$ 
		cannot be injective. By exactness of sequence~\eqref{eq:genus 1}, the connecting 
		homomorphism $\vartheta$ cannot be the zero map. Any non-trivial preimage by 
		$\vartheta$ 
		correlates to a non-trivial polyhedral lift.
	\end{proof}
	
	A triangulated surface in general position has non-trivial 
	polyhedral lifts. In a triangular mesh every node is incident to only triangular faces, and 
	a lift of the node lifts these triangles into a cone. Consequently, a regular cellular 
	decomposition of a manifold into triangles results in a $|V|$  dimensional space of 
	polyhedral lifts, where $|V|$ is the number of vertices.
	
	\begin{figure}[!ht]
		\begin{subfigure}[t]{0.45\textwidth}\centering
			\includegraphics{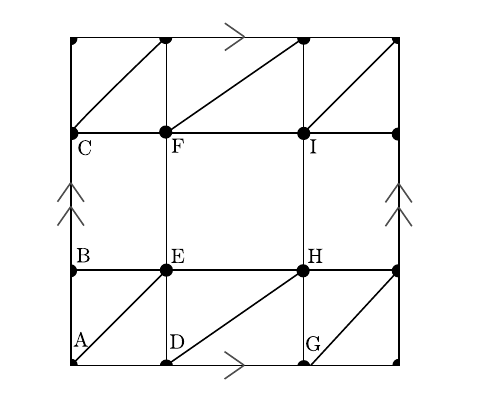}
			\caption{A cell complex $X$ with topology $M_1$.}
		\end{subfigure}
		\begin{subfigure}[t]{0.45\textwidth}\centering
			\includegraphics{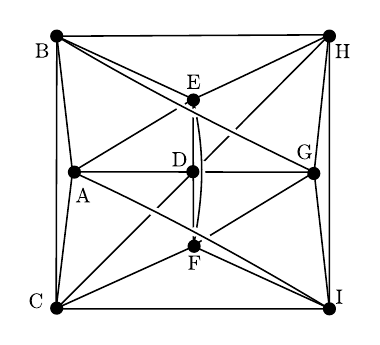}
			\caption{A form diagram $(X,p)$ in the plane with 9 degrees of self stress.}
		\end{subfigure}
		\centering
		\begin{subfigure}[t]{0.65\textwidth}\centering
			\includegraphics{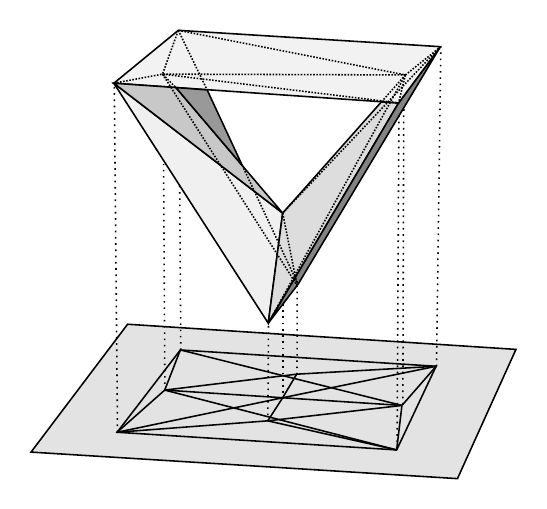}
			\caption{A polyhedral lift of $X$ to $\R^3$.}
		\end{subfigure}
		\caption{A cell complex $X$ with topology (a) is realized in (b). The truss is rigid and 
		by 
			the Euler characteristic~\eqref{eq:euler_force}, we observe that there are $H_1 
			\calf = 
			24 - 18 + 3 = 9$ degrees of self stress. With $9>6$, Theorem~\ref{thm:genuslift} 
			applies and there is at least one non-trivial dimension of torus polyhedral lifting in 
			$H_2 \cala / \overline{A\R^2}$, with a representative pictured in (c).	Numerical 
			computations indicates that there are in fact four dimensions of lifts up to shifts by 
			global affine functions.}
		\label{fig:torus_lift}
	\end{figure}
	
	In planar graphic statics, Theorems~\ref{thm:plane2D} and \ref{thm:maxwell_lift} 
	prove that a structure has an non-trivial reciprocal diagram if and only if the structure 
	has a non-trivial polyhedral lifting. Over a torus and higher genus surfaces, this 
	relationship no longer holds true \cite{Crapo1993}. Polyhedral lifts guarantee reciprocal 
	figures but the converse does not hold; Figure~\ref{fig:torus_pentagon} portrays a
	counterexample.
	
	\begin{figure}[!ht]\centering
		\begin{subfigure}[t]{0.45\textwidth}\centering
			\includegraphics{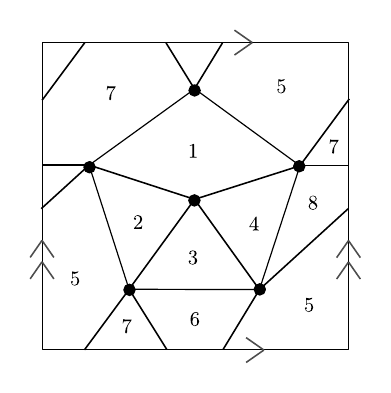}
			\caption{Toroidal topology of $X$.}
		\end{subfigure}
		\begin{subfigure}[t]{0.45\textwidth}\centering
			\includegraphics{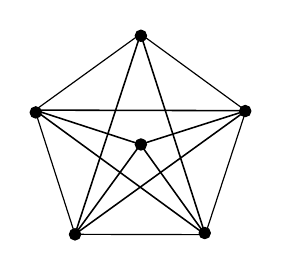}
			\caption{A form diagram $(X,p)$ with 5 degrees of self stress.}
		\end{subfigure}
		
		\begin{subfigure}[t]{0.65\textwidth}\centering
			\includegraphics{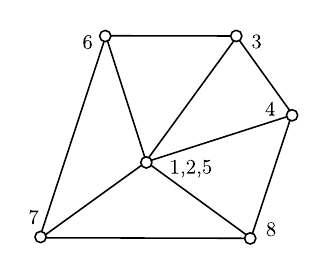}
			\caption{A dual force diagram is generated.}
		\end{subfigure}
		\caption{An example of a toroidal figure with reciprocal diagram but no non-trivial 
		polyhedral lift. In the form diagram (b), faces 5 and 1 lock all vertices in a single plane 
		preventing lifts. There are 5 degrees of self stress, meaning a non-trivial force 
		diagram exists by Theorem~\ref{thm:genus} in (c).}
		\label{fig:torus_pentagon}
	\end{figure}
	
	\begin{figure}[!ht]\centering
		\includegraphics[scale=0.8]{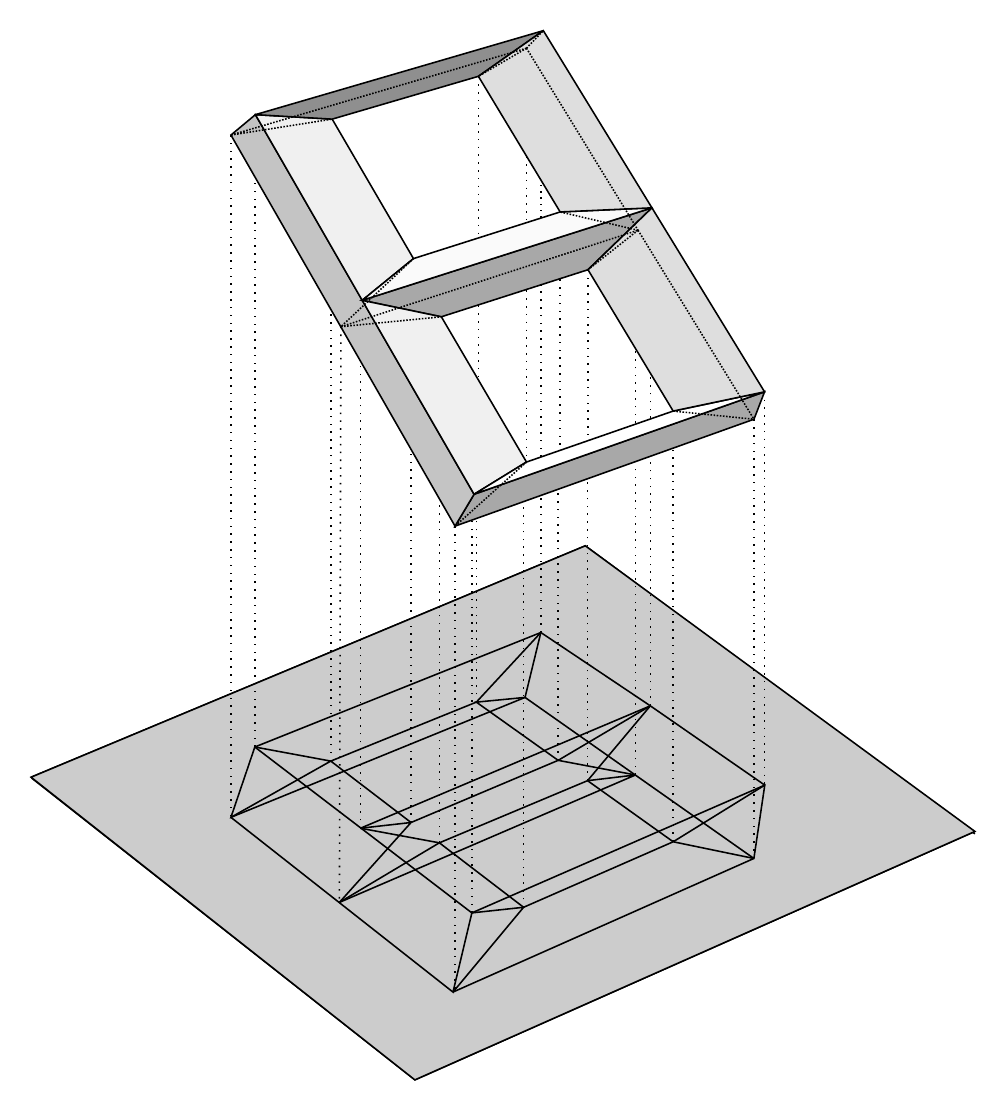}
		\caption{A polyhedral lift of a cell complex with the topology of a two holed torus 
			$M_2$ is pictured. Numerical computation shows this particular realization has 8 
			degrees of self stress, which is less than the 13 degree requirement to guarantee 
			the 
			existence of a lift. Due to the specific geometry of the realization there are 3 
			dimensions of polyhedral lifts up to shift 
			by affine function. One dimension determines width of the bounding box, the 
			other two determine the vertical position of the rectangular voids.}
	\end{figure}
	
	%
	\section{Conclusion}
	\label{sec:conc}
	%
	We proposed a development of algebraic graphic statics following sheaf and cosheaf 
	constructions. These formal methods from algebraic topology allow for extensions to 
	high 
	genus surfaces. We precisely defined reciprocal diagrams in the plane and found $H_1$ 
	of 
	the base surface to obstruct their formation. These homological constraints impede the 
	formation of lifts or dual diagrams in the plane. There are numerous future avenues of 
	research.
	
	\subsection{Sheaf Finite Elements}
	The {\it finite element method} is central to modern structural analysis, fluid flow, and 
	heat transfer mathematical modeling. In summary, a complex domain $M$ is 
	subdivided and discretized into a finite number of geometrically simple elements 
	\cite{reddy2019introduction}. A solution to a Poisson problem with boundary 
	conditions 
	over $M$ is approximated by algebraic approximation functions between the finite 
	elements, these derived from weighted integrals over local 
	neighborhoods. These approximation functions are typically polynomials and the entire 
	discrete system is otherwise known as a spline \cite{billera1988homology}. Introduced 
	approximation errors introduced can be bounded and shown (in a well formulated 
	model) 
	to converge to 0 as the mesh is refined.
	
	It has been shown that polynomial splines are homological and can be 
	described by cosheaves \cite{GhristEAT}. In this setting the force cosheaf $\calf$ is a 
	spline of constant functions and the affine cosheaf $\cala$ is a spline of linear-affine 
	functions. The stiffness matrix problem $a = \Delta_\calf b$ and its pseudo-inverse are 
	quintessential in linear-elastic mechanics. There is potential to extend these 
	homological 
	methods to other finite element processes.
	
	\subsection{Trusses on Manifolds}
	Definition~\ref{def:forcecosheaf} of the force cosheaf suggests an extension of truss 
	mechanics to general manifolds. Instead of a cell complex 
	$X$ being realized in Euclidean space, we realize vertices $v$ with positions $p_v \in 
	M$ and where edges are geodesics $\gamma_e: \R^1 \to M$ (one must take care that 
	these geodesics exist e.g. by the Hopf--Rinow theorem). To an edge $u,v\lhd e$, the 
	force cosheaf $\calf$ would have extension maps $\calf_{e \rhd u}$ and 
	$\calf_{e \rhd v}$ the push-forward $(\gamma_e)_*$ mapping to $T_{p_u} M$ and 
	$T_{p_v} M$ along the curve. A more complete analysis of manifold trusses 
	warrants further attention.
	
	\subsection{3D Graphic Statics}
	The homological constructions here are sufficient to generalize to higher dimensions. In 
	{\it 3D vector graphic statics} the form diagram is embedded 
	in $\R^3$. In Section~\ref{sec:2D1} we can replace all occurrences of $\R^2$ with 
	$\R^3$, 
	leading to 3D vector graphic statics duality. Both Theorems~\ref{thm:plane2D} and 
	Theorems~\ref{thm:second_theorem} can be translated. As the technique of rotating 
	stalks by one quarter turn does not transfer there is no notion of reciprocity.
	
	Another avenue is in {\it 3D polyhedral graphic statics} which has shown to 
	have geometric relations far more complex than the 2D setting 
	\cite{ReciprocalAkbarzadeh2016}. Investigations show that the linear algebraic relations 
	when including 3-cells are described by a {\it spectral sequence} of pertinent cosheaves.
	
	\newpage
	\bibliographystyle{IEEEtran} {\small \bibliography{Graphic_Statics}}
	
\end{document}